\pdfoutput=1
\documentclass[a4paper,reqno]{amsart}

\usepackage[utf8]{inputenc}
\usepackage[T1]{fontenc}
\usepackage{newtxtext}

\usepackage{braket}
\usepackage{epigraph}
\usepackage{microtype}
\usepackage{hyphenat}
\usepackage{amsmath,amssymb,amsfonts,amsthm}
\usepackage{mathtools}
\usepackage{mathrsfs}
\usepackage{dsfont}
\usepackage{siunitx}
\usepackage{cancel}

\usepackage[dvipsnames]{xcolor}
\usepackage{graphicx}
\usepackage[all,cmtip]{xy} 

\usepackage{tikz}
\usepackage{tikz-cd}
\usepackage{pgfplots}
\usepackage{tikz-3dplot}
\usetikzlibrary{calc, shadings, decorations.markings, shapes, arrows}
\pgfplotsset{compat=1.18}
\usepackage{wrapfig}
\usepackage{subfiles}

\usepackage{booktabs}
\usepackage{tabularx}
\usepackage{enumitem}
\usepackage{longtable}
\usepackage{rotating}

\usepackage{xspace}
\usepackage[printonlyused,withpage]{acronym}
\usepackage{lipsum} 
\usepackage{listings}

\usepackage[square, numbers]{natbib}
\citeindextrue

\usepackage[colorlinks=true, linkcolor=blue, citecolor=blue, urlcolor=blue]{hyperref}

\definecolor{light-gray}{gray}{0.95}
\usepackage[disable, color=light-gray,textwidth=2cm]{todonotes}

\def\XXint#1#2#3{{\setbox0=\hbox{$#1{#2#3}{\int}$} 
		\vcenter{\hbox{$#2#3$}}\kern-.5\wd0}}

\renewcommand{\theta}{\vartheta}
\renewcommand{\epsilon}{\varepsilon}

\makeatletter
\renewcommand{\fnum@figure}{\textsc{\figurename~\thefigure}} 
\makeatother

\def\Proofsketch{{\medbreak\noindent{\textit{Sketch of proof.} }}}
\def\endproof{~\hfill $\blacksquare$\par\bigskip}

\newtheoremstyle{classicthm}
{12pt}{12pt}{\slshape}{}{\bfseries}{.}{.5em}{}

\theoremstyle{classicthm}
\newtheorem{theoremd}{Theorem}[section]
\newenvironment{theorem}{\begin{theoremd}}{ \end{theoremd}}

\newtheorem{theoremd*}{Theorem}
\newenvironment{theorem*}{\begin{theoremd*}}{ \end{theoremd*}}

\newtheorem{cord}[theoremd]{Corollary}

\newtheorem{lemd}[theoremd]{Lemma}

\newtheorem{propd}[theoremd]{Proposition}
\newenvironment{prop}{\begin{propd}}{ \end{propd}}

\newtheorem{definitiond}[theoremd]{Definition}
\newenvironment{definition}{\begin{definitiond}}{ \end{definitiond}}

\newtheoremstyle{remarkstyle}
{12pt}{12pt}{\upshape}{}{\itshape}{.}{.5em}{} 

\theoremstyle{remarkstyle}

\newtheorem{remd}[theoremd]{Remark}
\newenvironment{rem}{\begin{remd}}{	\end{remd}}

\newtheorem{exampled}[theoremd]{Example}

\newtheorem{ossd}[theoremd]{Observation}

\newtheorem{noted}{Note}[theoremd]

\title{The Fried Conjecture for Morse--Smale Flows: A Survey on Ray--Singer and Milnor Metrics}
\author{Giovanni Molinari}
\address{Department of Applied Mathematics, University of Waterloo, 200 University Avenue West, Waterloo, Ontario, Canada, N2L 3G1}
\address{Perimeter Institute for Theoretical Physics, 31 Caroline Street North, Waterloo, Ontario, Canada N2L 2Y5}
\email{gmolinar@uwaterloo.ca} 
\date{\today} 
\begin{document}
\begin{abstract}
This paper reviews the generalised Fried conjecture for Morse--Smale flows on compact Riemannian manifolds. We first establish the proper framework by constructing the twisted de Rham complex and deriving the Hodge decomposition, which underpins the definition of the Ray--Singer torsion. On the dynamical side, we characterise Morse--Smale vector fields, introducing the Ruelle Zeta function to encode the spectral data of closed orbits and constructing the Thom-Smale complex to include the contribution of fixed points. These invariants are synthesised into the definition of the Milnor metric on the determinant line of the twisted cohomology. Finally, we present the theorem proving the conjecture: the Ray-Singer metric coincides with the Milnor metric, identifying the analytic torsion with the product of the Thom-Smale combinatorial torsion and the value at zero of the Ruelle Zeta function.
\end{abstract}
\maketitle
\tableofcontents
\section{Introduction}

The relation between the topological structure of a manifold and the dynamical properties of flows defined on it is a central theme in modern mathematics. Well-known manifestations of this link include the Poincaré--Hopf theorem, which relates the indices of vector field singularities to the Euler characteristic, and Morse theory, which reconstructs the homology of a manifold from the critical points of a gradient flow \cite{Milnor63, Smale61}. However, to probe the deeper geometric properties of these spaces, one requires invariants capable of detecting fine structural nuances beyond standard topological data. Among these, torsion occupies a privileged position.

Historically, the concept of torsion emerged in combinatorics during the 1930s with the work of Reidemeister \cite{Reidemeister35} and Franz \cite{Franz35}, who introduced it to classify lens spaces indistinguishable by homology and homotopy groups alone.For decades, this combinatorial torsion remained a purely algebraic object, distinct from the analytic framework of differential geometry. This separation was bridged in the 1970s by Ray and Singer \cite{RaySinger71}, who introduced the analytic torsion. Defined via the spectral properties of the Laplacian acting on differential forms, this quantity depends \textit{a priori} on the choice of a Riemannian metric. However, the celebrated Cheeger-Müller theorem \cite{Cheeger79, Muller78} subsequently established the equality of the analytic torsion with the combinatorial one for compact manifolds. A crucial consequence of this result is the proof that the analytic torsion is, in fact, independent of the auxiliary Riemannian metric used in its construction.

With the equivalence between the combinatorial and analytic perspectives established, the focus shifted towards a dynamical interpretation of these invariants. In the 1980s, David Fried \cite{Fried87} proposed a visionary conjecture linking the spectral geometry of a manifold to the recurrence properties of a flow. He posited that the analytic torsion could be entirely recovered from the minimal periods of closed orbits, which are rigorously encoded in the Ruelle Zeta function. Specifically, Fried's insight states that the value of this function at zero should coincide exactly with the analytic torsion, implying that the Laplacian spectrum is determined by the lengths of the system's periodic trajectories.

This paper is dedicated to reviewing a rigorous derivation of this equivalence within the specific setting of Morse--Smale flows. Unlike chaotic systems such as Anosov flows, where the number of periodic orbits is generally infinite, Morse--Smale flows represent the simplest class of structurally stable dynamical systems. Their essential dynamics are captured by the non-wandering set -- the locus of points exhibiting recurrent behaviour -- which in this tame setting is strictly confined to a finite number of hyperbolic fixed points and closed orbits. These isolated critical elements organise the global flow via their invariant manifolds. Heuristically, the stable manifold of a critical element consists of all trajectories that are asymptotically attracted to it in the distant future, while the unstable manifold collects the trajectories that originate from it in the remote past. The defining requirement of Morse--Smale flows is that these incoming and outgoing surfaces intersect as cleanly as possible, a condition known as transversality.

A distinctive feature of our framework is the use of differential forms taking values in a flat vector bundle $\mathcal{E}$, associated with a unitary representation of the fundamental group of the base manifold $M$. The introduction of a non-trivial representation was primarily motivated by the requirement of acyclicity. By ensuring the vanishing of the cohomology, the invariants involved could be treated as scalar values, thereby simplifying the analysis.
However, in line with modern approaches, we adopt the bundle formalism not merely to dispense with the acyclicity hypothesis, but because it constitutes the most general and natural geometric framework in which to phrase the problem.

Consequently, a significant portion of our work is devoted to establishing the foundations of this twisted calculus, including the construction of the twisted de Rham differential and the associated Hodge theory. This cohomological framework is essential to understand the modern extension of the Fried conjecture. Indeed, when the cohomology does not vanish, the original scalar equality posited by Fried must be upgraded to an equivalence between two specific metrics constructed on the determinant line of the twisted cohomology: the Ray--Singer metric and the Milnor metric.

The Ray--Singer metric is defined analytically via the spectral properties of the twisted Laplacian. On the dynamical side, the Milnor metric is constructed by decomposing the dynamical information into two distinct factors. The first one is a contribution derived from the fixed points, which is identified as the torsion of the Thom--Smale complex. This cochain complex is generated by the critical points of the flow, with a differential defined by counting flow lines between those elements, weighted by parallel transport. The second factor is a contribution arising from the closed orbits, \textit{i.e.}, the Ruelle Zeta function evaluated at zero. 

Within this framework, the unitarity assumption for the representation is maintained as a convenient technical hypothesis: it guarantees that the two metrics under comparison coincide exactly, without the need for additional correction terms. 

Drawing on the seminal works of Fried \cite{Fried87} and the recent generalisations by Shen and Yu \cite{Shen21} for arbitrary Morse--Smale flows, we provide a unified proof of this metric equivalence, which serves as the formal validation of Fried's intuition, demonstrating that the spectral properties of the manifold are holographically encoded in the structure of the underlying flow.

\vspace{\baselineskip}

In Section \ref{sec: twisted-differential}, we establish the geometric framework by constructing the flat vector bundle $\mathcal{E}$ via a unitary representation and defining the twisted de Rham complex. We also introduce the essential differential operators, such as the twisted de Rham differential, its formal adjoint operator and the twisted Laplacian.

In Section \ref{sec: Hodge-decomposition}, we derive the Hodge decomposition for twisted differential forms and establish the isomorphism between harmonic forms and the twisted de Rham cohomology.

In Section \ref{sec: M-S-flow}, we characterise Morse--Smale flows, defining their non-wandering set in terms of hyperbolic fixed points and closed orbits, and describing the geometry of their stable and unstable manifolds.

In Section \ref{sec: Thom--Smale-torsion}, we review the general theory of algebraic torsion for finite-dimensional complexes and apply it to construct the Thom--Smale complex, which encodes the interaction between critical points and parallel transport. 

In Section \ref{sec: analytic-torsion-zeta}, we define the Ray--Singer analytic torsion through the zeta-regularisation of the Laplacian and introduce the Ruelle Zeta function to capture the spectral data of closed orbits. 

Finally, in Section \ref{sec: Fried-conjecture}, we construct the Milnor metric on the determinant line of the twisted cohomology and present the generalised Fried conjecture, proving the equality between the analytic and dynamical metrics.

\section*{Acknowledgements}
This paper collects part of the preliminary work carried out during the author's Master Thesis at the University of Pavia, and it is written with the aim of offering a concise account of the current state of the art on the subject, intended to support a companion research article soon to appear. I would like to thank M.\ Schiavina and C.\ Dappiaggi for their guidance and support, and for many valuable comments and discussions throughout the preparation of this work.

\section{Twisted de Rham Differential}
\label{sec: twisted-differential}

In this section we extend the standard notion of the de Rham differential to smooth forms taking values in a suitable vector bundle. For the construction of the twisted differential, we mainly refer to \cite{MathaiWu11Twisted} and \cite{DangRiviere17Anisotropic}, while we rely on \cite{Lee12Smooth, Hatcher02, Munkres00} for the general mathematical background.

\begin{proof}[Conventions and geometric setup]
\label{conventions-setup}
Throughout this paper, unless stated otherwise, we shall assume the following mathematical setting. Let $(M, g)$ be a compact, connected, oriented Riemannian manifold of dimension $n$ with empty boundary ($\partial M = \emptyset$). We denote by $\mathfrak{X}(M)$ the space of smooth vector fields on $M$ and to each $V \in \mathfrak{X}(M)$, we assign the associated flow map $\phi^t_V: M \to M$. It is important to note that due to the compactness of $M$, this flow is complete, meaning it is defined for all $t \in \mathbb{R}$. 

Furthermore, given $E$ a complex vector space of dimension $N$, we fix a unitary representation $\rho: \pi_1(M) \to \mathrm{GL}(E)$. Since $M$ is connected, we omit the reference to the base point in the notation of the fundamental group $\pi_1(M)$.
\end{proof}

\begin{definition}
\label{total-space}
    Given $\rho: \pi_1(M) \to \mathrm{GL}(E)$, we define the total space $\mathcal{E}$ as:
    \begin{equation}
    \label{eq:equivalence-relation}
        \mathcal{E} \coloneqq (E \times \widetilde{M})/ \sim,
    \end{equation}
    where $\widetilde{M}$ stands for the universal cover of $M$ and the equivalence relation is defined by the action of the fundamental group via deck transformations:
    \begin{equation}
        (v, x\cdot [\gamma]) \sim (\rho([\gamma])v, x), \quad \forall [\gamma] \in \pi_1(M), \quad \forall x \in \widetilde{M}, \quad \forall v \in E.
    \end{equation}
    The space $\mathcal{E}$ is endowed with a natural projection $\mathbb{P}: \mathcal{E} \to M$, given by $\mathbb{P}([v, x]) \coloneqq \pi(x)$, where $\pi: \widetilde{M} \to M$ is the covering map.
\end{definition}

It is natural to ask whether the constructed space $\mathcal{E}$ works as the total space of a smooth vector bundle over $M$. The following proposition provides the answer.

\begin{prop}
\label{vector-bundle}
    Let $\mathcal{E}$ and $\mathbb{P}: \mathcal{E} \to M$ be as per Definition \ref{total-space}. Then, the triple $(M, \mathcal{E}, \mathbb{P})$ is a smooth vector bundle of rank $N$.
\end{prop}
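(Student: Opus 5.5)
We build local trivialisations of $\mathcal{E}$ from evenly covered open sets of the covering map $\pi:\widetilde{M}\to M$. Then we check that the transition maps are locally constant, hence smooth, and take values in $\mathrm{GL}(E)$. The smooth structure on $\mathcal{E}$ is then defined by declaring these trivialisations to be diffeomorphisms onto $U\times E$. The plan follows the standard construction of the associated bundle $\widetilde{M}\times_{\pi_1(M)}E$.

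\textbf{Step 1 (local sections of the cover).} Since $\widetilde{M}\to M$ is a smooth covering, every $p\in M$ has a connected open neighbourhood $U$ that is evenly covered. Fix one sheet and call it $\widetilde U\subset\pi^{-1}(U)$. Then $\pi|_{\widetilde U}:\widetilde U\to U$ is a diffeomorphism with smooth inverse $s_U:U\to\widetilde{M}$. The deck group $\pi_1(M)$ acts freely and transitively on the fibres and permutes the sheets. Hence every $x\in\pi^{-1}(U)$ can be written uniquely as $x=s_U(\pi(x))\cdot[\gamma]$ for some $[\gamma]\in\pi_1(M)$, and $[\gamma]$ is locally constant in $x$.

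\textbf{Step 2 (trivialisations).} Define $\Phi_U:\mathbb{P}^{-1}(U)\to U\times E$ by
\begin{equation*}
[v, s_U(q)\cdot[\gamma]]\mapsto (q,\rho([\gamma])v).
\end{equation*}
Equivalently, using the relation, every class over $U$ has a unique representative of the form $(w,s_U(q))$, and $\Phi_U$ sends it to $(q,w)$.
\begin{itemize}
\item Well-definedness: changing the representative via $\sim$ replaces $(v,x\cdot[\delta])$ by $(\rho([\delta])v,x)$. Uniqueness of the decomposition in Step 1 shows both representatives give the same image. This uses that $\rho$ is a homomorphism, and the convention for the right action must be matched with $\rho$ being a left action on $E$; I would state this explicitly.
\item Bijectivity: the inverse is $(q,w)\mapsto[w,s_U(q)]$.
\item Fibres: $\Phi_U$ maps $\mathbb{P}^{-1}(q)$ bijectively onto $\{q\}\times E$. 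We transport the vector space structure of $E$ to the fibre through this map.
\end{itemize}

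\textbf{Step 3 (transition functions and compatibility).} On $U\cap V$ we have $s_V(q)=s_U(q)\cdot g_{UV}(q)$ for a unique $g_{UV}(q)\in\pi_1(M)$. The map $g_{UV}$ is locally constant, because the sheets are open and disjoint. It follows that
\begin{equation*}
\Phi_U\circ\Phi_V^{-1}(q,w)=(q,\rho(g_{UV}(q))^{\pm1}w),
\end{equation*}
where the sign depends on the convention for the action. The map $q\mapsto\rho(g_{UV}(q))^{\pm1}$ is locally constant into $\mathrm{GL}(E)$, hence smooth, and it is linear on fibres. In particular the vector space structure on each fibre does not depend on the chosen chart. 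Applying the standard vector bundle chart lemma (Lee, Lemma 10.6), $\mathcal{E}$ acquires a unique topology and smooth manifold structure. With this structure $\mathbb{P}$ is a smooth rank-$N$ vector bundle and each $\Phi_U$ is a local trivialisation. One should also check that this structure agrees with the quotient topology from $E\times\widetilde{M}$; I would verify this only briefly.

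\textbf{Main obstacle.} The only delicate point is the bookkeeping in Steps 2–3. One must fix a consistent convention for the right deck action $x\cdot[\gamma]$ together with the left representation $\rho$, so that $\sim$ is genuinely an equivalence relation and the transition map is $\rho(g)$ rather than an anti-homomorphism. I would settle this first by checking transitivity of $\sim$. Everything else is routine.

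Unitarity of $\rho$ is not needed for this statement. It will later provide a Hermitian metric preserved by the flat structure.
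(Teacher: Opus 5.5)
Your proposal is correct and follows essentially the same route as the paper: local sections of the universal cover give the trivialisations $[v, s_U(q)\cdot[\gamma]]\mapsto(q,\rho([\gamma])v)$, the transition maps are $\rho$ applied to a locally constant deck transformation (hence smooth), and Lee's Vector Bundle Chart Lemma concludes. The differences are minor refinements. You get local constancy of $g_{UV}$ directly from the sheets being open, whereas the paper uses a cover by simply connected sets with connected intersections. You also check explicitly that the vector space structure on each fibre does not depend on the chart, which the paper leaves implicit.
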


\Proofsketch
The result follows by verifying the hypotheses of the \textit{Vector Bundle Chart Lemma} \cite[Lemma 10.6]{Lee12Smooth}. Let $\{U_\alpha\}$ be an open cover of $M$ consisting of simply connected sets with connected intersections. For each $\alpha$, we fix a smooth local section $\widetilde{s_\alpha}: U_\alpha \to \widetilde{M}$ of the universal covering map $\pi$. We define the local trivialisation $\Phi_\alpha: \mathbb{P}^{-1}(U_\alpha) \to U_\alpha \times E$ as
\begin{equation*}
    [\widetilde{q}, v] \mapsto (q, \rho([\gamma]_q)v),    
\end{equation*}
where $q=\pi(\widetilde{q})$ and $[\gamma]_q \in \pi_1(M)$ is the unique deck transformation satisfying $\widetilde{q} = \widetilde{s_\alpha}(q) \cdot [\gamma]_q$.\\
One can verify that $\Phi_\alpha$ is well-defined, bijective, and restricts to a linear isomorphism on each fibre. The smoothness relies on the transition maps. On an overlap $U_\alpha \cap U_\beta$, the chosen sections differ by a deck transformation $[\gamma_{\alpha\beta}]_q$ such that $\widetilde{s_\alpha}(q) = \widetilde{s_\beta}(q) \cdot [\gamma_{\alpha\beta}]_q$. Since the domain is connected and $\pi_1(M)$ is discrete, $[\gamma_{\alpha\beta}]_q$ must be a constant. A direct computation shows that the transition is given by:
\[
    (\Phi_\beta \circ \Phi_\alpha^{-1})(q, v) = (q, \rho([\gamma_{\alpha\beta}])v).
\]
Consequently, the transition functions $g_{\alpha\beta}(q) = \rho([\gamma_{\alpha\beta}])$ are locally constant and therefore smooth.
\endproof

It is interesting to characterise the smooth sections of this vector bundle.

\begin{rem}
\label{sections}
There exists a one-to-one correspondence between smooth sections $s \in \Gamma(M, \mathcal{E})$ and smooth equivariant maps $\phi: \widetilde{M} \to E$, \textit{i.e.}, maps that satisfy:
\begin{equation}
    \phi(x \cdot [\gamma]) = \rho([\gamma])^{-1}\phi(x), \quad \forall [\gamma] \in \pi_1(M), \quad \forall x \in \widetilde{M}.
\end{equation}
Given such a map, the corresponding section is defined by $s(q) = [\phi(x), x]$ for any lift $x \in \pi^{-1}(q)$.
\end{rem}

Let us now introduce a calculus on $\mathcal{E}$. We denote by $\Omega^{\bullet}(M, \mathcal{E})$ the space of $\mathcal{E}$-valued differential forms. While the standard wedge product is not defined internally, the space admits a module structure over scalar forms via a map $\wedge: \Omega^k(M) \times \Omega^l(M, \mathcal{E}) \to \Omega^{k+l}(M, \mathcal{E})$. To define a differential operator on them, however, we require a connection.

\begin{theorem}
\label{E-differential}
Let $\nabla: \mathfrak{X}(M) \times \Gamma(M, \mathcal{E}) \to \Gamma(M, \mathcal{E})$ be a connection. There exists a unique linear operator $d^\nabla: \Omega^\bullet(M, \mathcal{E}) \to \Omega^{\bullet + 1}(M, \mathcal{E})$, called the exterior covariant derivative, which satisfies the following properties:
\begin{enumerate}[label=(\Roman*)]
    \item \textbf{Leibniz Rule:} $d^\nabla (\alpha \wedge \omega) = d\alpha \wedge \omega + (-1)^{\deg \alpha} \alpha \wedge d^\nabla \omega, \quad \forall \alpha \in \Omega^\bullet(M), \;\forall \omega \in \Omega^\bullet(M, \mathcal{E})$;
    \item \textbf{Action on sections:} $d^\nabla s = \nabla s, \quad  \forall s \in \Gamma(M, \mathcal{E})$;
    \item The square of the operator is related to the curvature 2-form $F_\nabla$ of the connection by:
    \begin{equation}
        d^\nabla(d^\nabla \omega) = F_\nabla \wedge \omega, \quad \forall \omega \in \Omega^\bullet(M, \mathcal{E}).
    \end{equation}
\end{enumerate}
For the proof of existence, uniqueness, and the curvature formula, we refer to \cite[Theorem 12.27 and Proposition 12.64]{Lee09ManifoldsDiffGeom}.
\end{theorem}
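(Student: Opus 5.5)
The plan is to build $d^\nabla$ locally, show it is forced by (I) and (II), glue the local definitions using that uniqueness, and then compute the square.

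\textbf{Uniqueness and locality.} First I show that any operator satisfying (I) and (II) is local: if $\omega$ vanishes on an open set $U$, then so does $d^\nabla\omega$. Given $p\in U$, take a bump function $\psi$ with $\psi=1$ near $p$ and $\operatorname{supp}\psi\subset U$. Then $\psi\omega=0$, so by (I)
\[
0=d^\nabla(\psi\omega)=d\psi\wedge\omega+\psi\, d^\nabla\omega .
\]
Evaluating at $p$, where $d\psi=0$ and $\psi=1$, gives $(d^\nabla\omega)(p)=0$. Next, on a trivialising open set $U$ with a local frame $e_1,\dots,e_N$ of $\mathcal{E}$, every $\omega\in\Omega^k$ can be written as $\omega=\sum_i\omega^i\wedge e_i$ with scalar forms $\omega^i\in\Omega^k(U)$. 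After the usual bump-function extension, (I) and (II) force
\[
d^\nabla\omega=\sum_i\bigl(d\omega^i\wedge e_i+(-1)^k\omega^i\wedge\nabla e_i\bigr).
\]
So $d^\nabla$ is determined on $U$, hence everywhere.

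\textbf{Existence.} I take the formula above as the definition on each chart. To see that these local definitions agree on overlaps, I check that the formula satisfies (I) and (II) on $U$. Property (II) is immediate from the Leibniz rule of $\nabla$, and (I) follows from the Leibniz rule for the scalar $d$. The uniqueness argument, applied on $U\cap V$, then shows that the definitions from two frames coincide, so they glue to a global operator. Alternatively, I would define the operator globally on decomposable elements $\alpha\otimes s$ by
\[
d^\nabla(\alpha\otimes s)=d\alpha\otimes s+(-1)^{\deg\alpha}\alpha\wedge\nabla s,
\]
and check that this is compatible with $C^\infty(M)$-bilinearity, using $\Omega^\bullet(M,\mathcal{E})\cong\Omega^\bullet(M)\otimes_{C^\infty}\Gamma(\mathcal{E})$. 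This compatibility check is the only mildly delicate step; it reduces to $d(f\alpha)\otimes s+\dots=d\alpha\otimes fs+\dots$, which holds because $\nabla(fs)=df\otimes s+f\nabla s$.

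\textbf{Curvature identity.} Applying (I) twice gives
\[
d^\nabla d^\nabla(\alpha\wedge\omega)=\alpha\wedge d^\nabla d^\nabla\omega ,
\]
because $d^2=0$ and the cross terms cancel by the signs. Hence $(d^\nabla)^2$ is $\Omega^\bullet(M)$-linear, and it is enough to verify the identity on sections. For a section $s$, I evaluate the 1-form $\nabla s$ using the invariant formula for the exterior derivative of a 1-form:
\[
(d^\nabla\nabla s)(X,Y)=\nabla_X\nabla_Y s-\nabla_Y\nabla_X s-\nabla_{[X,Y]}s=F_\nabla(X,Y)s .
\]
Combining this with the $\Omega^\bullet(M)$-linearity gives $d^\nabla d^\nabla\omega=F_\nabla\wedge\omega$ for all $\omega$. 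The main obstacle is bookkeeping: checking the signs in the double application of (I), and checking the invariant formula for $d^\nabla$ on $\mathcal{E}$-valued 1-forms. That formula follows from the local expression by a direct calculation.
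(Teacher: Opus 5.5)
Your argument is correct and is the standard textbook proof that the paper does not reproduce but simply cites from Lee: locality via bump functions, the local frame formula forced by (I)--(II), gluing by uniqueness on overlaps, and the $\Omega^\bullet(M)$-linearity of $(d^\nabla)^2$ reducing (III) to the computation $d^\nabla\nabla s = F_\nabla s$. The only point left implicit is that $\omega\mapsto F_\nabla\wedge\omega$ is likewise $\Omega^\bullet(M)$-linear, which holds with no sign because $F_\nabla$ has even degree.
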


Unlike the standard de Rham differential, $d^\nabla$ is not necessarily nilpotent, as shown by Item \textit{(III)}. To obtain a cochain complex, we must restrict our attention to flat connections.

\begin{definition}
\label{twisted-differential}
A connection $\nabla$ is called {flat} if its curvature vanishes, i.e., $F_\nabla \equiv 0$. In this case $d^\nabla$ is referred to as the {twisted de Rham differential} and the pair $(\Omega^{\bullet}(M, \mathcal{E}), d^{\nabla})$ forms the {twisted de Rham complex}.
\end{definition}

To establish an $L^2$-structure on the space of twisted forms, we equip the vector bundle $\mathcal{E}$ with a Hermitian metric $h$. This allows us to define a scalar-valued wedge product and a generalised Hodge dual.

\begin{definition}
\label{scalar-wedge-product}
Let $h$ be a Hermitian metric on the vector bundle $\mathcal{E}$. We define the pairing $\wedge_h: \Omega^i(M, \mathcal{E}) \times \Omega^j(M, \mathcal{E}) \to \Omega^{i+j}(M)$ as the unique bilinear extension of the operation defined on decomposable forms by:
\begin{equation}
    (\alpha \otimes s) \wedge_h (\beta \otimes t) \coloneqq h(s, t) \, (\alpha \wedge \beta),
\end{equation}
where $\alpha \in \Omega^i(M)$, $\beta \in \Omega^j(M)$ and $s, t \in \Gamma(M, \mathcal{E})$. By definition, this global pairing satisfies the graded commutativity relation $\omega \wedge_h \eta = (-1)^{ij}{\eta \wedge_h \omega}$.
\end{definition}

Moreover, the Riemannian metric $g$ on $M$ and the Hermitian metric $h$ on $\mathcal{E}$ combine to define the twisted Hodge star.

\begin{definition}
\label{Hodge-twisted}
The \textit{twisted Hodge dual} is the unique isomorphism $*_h : \Omega^{k}(M, \mathcal{E}) \to \Omega^{n-k}(M, \mathcal{E})$ satisfying:
\begin{equation}
    \omega \wedge_h *_h \eta = \langle \omega, \eta \rangle_{gh} \, \mu, \quad \forall \omega, \eta \in \Omega^k(M, \mathcal{E}),
\end{equation}
where $\langle \cdot, \cdot \rangle_{gh}$ is the pointwise inner product and $\mu$ is the Riemannian volume form. The operator satisfies the identity $*_h *_h = (-1)^{k(n-k)} \mathbb{I}$ on $k$-forms.
\end{definition}

This leads to the definition of a global inner product, turning the space of twisted forms into a pre-Hilbert space.

\begin{definition}
\label{scalar-product-forms}
The $L^2$-inner product on $\Omega^k(M, \mathcal{E})$ is defined by:
\begin{equation}
    (\omega, \eta) \coloneqq \int_M \omega \wedge_h *_h \eta, \quad \forall \omega, \eta \in \Omega^{k}(M, \mathcal{E}).
\end{equation} 
We denote by $L^2(M, \Lambda^\bullet T^*M \otimes \mathcal{E})$ the Hilbert space completion of $\Omega^\bullet(M, \mathcal{E})$.
\end{definition}

We now have sufficient structure to provide the definition of twisted de Rham cohomology.

\begin{definition}
\label{twisted-cohomology}
The $k$-th twisted de Rham cohomology group for a flat vector bundle with connection $\nabla: \mathfrak{X}(M) \times \Gamma(M, \mathcal{E}) \to \Gamma(M, \mathcal{E})$ is defined as the quotient:
\begin{equation}
H^{{k}}(M,\mathcal{E}) \coloneqq
\frac{\operatorname{Ker} \left(d^{\nabla}_{{k}} : \Omega^{{k}}(M, \mathcal{E}) \to \Omega^{{k+1}}(M, \mathcal{E})\right)}
{\operatorname{Im} \left(d^{\nabla}_{{k-1}} : \Omega^{{k-1}}(M, \mathcal{E}) \to \Omega^{{k}}(M, \mathcal{E})\right)}.
\end{equation}
The twisted Betti numbers are defined as $b_{{k}} \coloneqq \operatorname{dim}(H^{{k}}(M, \mathcal{E}))$.
\end{definition}

To conclude this section, we introduce the operators required for the Hodge decomposition: the codifferential and the Laplacian.

\begin{definition}
\label{def: codiff-laplacian}
The codifferential $d^{\nabla, \dagger}: \Omega^{k+1}(M, \mathcal{E}) \to \Omega^k(M, \mathcal{E})$ is defined as the formal adjoint of $d^\nabla$ with respect to the $L^2$-product:
\begin{equation}
    (d^\nabla \omega, \eta) = (\omega, d^{\nabla, \dagger} \eta), \quad \forall \omega \in \Omega^k(M, \mathcal{E}), \; \forall \eta \in \Omega^{k+1}(M, \mathcal{E}).
\end{equation}
The {twisted Laplacian} $\Delta : \Omega^k(M, \mathcal{E}) \to \Omega^k(M, \mathcal{E})$ is defined as:
\begin{equation}
   \Delta \coloneqq d^{\nabla, \dagger} d^\nabla + d^\nabla d^{\nabla, \dagger}.
\end{equation}
\end{definition}

\begin{rem}
\label{co-adjoint}
There exists a useful relation between the differential and its adjoint. Using the properties of the Hodge star and integrating by parts, the codifferential can be explicitly expressed as:
\begin{equation}
    d^{\nabla, \dagger}_k = (-1)^{nk+1} *_h d^\nabla_{n-k-1} *_h,
\end{equation}
where $n = \operatorname{dim}(M)$ and $*_h$ is the twisted Hodge operator as per Definition \ref{Hodge-twisted}.
\end{rem}

\section{Hodge Decomposition}
\label{sec: Hodge-decomposition}

The Hodge decomposition provides a fundamental splitting of the space of differential forms, intimately linking the analytic properties of the Laplacian to the topological structure of the manifold. For the rigorous proofs of the subsequent results, we refer the reader to \cite[Chapter 6]{Warner1983}. It is worth noting that while this reference deals with standard scalar-valued forms, the results extend naturally to the twisted setting.

Let us begin by recalling that the Laplacian $\Delta$ is self-adjoint with respect to the $L^2$-inner product in Definition \ref{scalar-product-forms}. Its kernel defines the space of harmonic forms:

\begin{equation}
\label{eq:harmonic}
    \mathcal{H}^{k}(M, \mathcal{E}) \coloneqq \{\omega \in \Omega^{k}(M, \mathcal{E}) : \Delta\omega = 0\}.
\end{equation}

\begin{prop}
\label{harmonic-exact}
    A twisted form $\alpha \in \Omega^k(M, \mathcal{E})$ is harmonic if and only if it is closed and coclosed, \textit{i.e.},
    \[ \Delta\alpha=0 \iff d^\nabla \alpha=0 \quad \text{and} \quad d^{\nabla,\dagger} \alpha=0. \]
\end{prop}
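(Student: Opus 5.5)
The plan is the standard integration-by-parts argument, using that $M$ is closed and that $d^{\nabla,\dagger}$ is the formal adjoint of $d^\nabla$ (Definition \ref{def: codiff-laplacian}).

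The reverse implication is immediate. If $d^\nabla\alpha=0$ and $d^{\nabla,\dagger}\alpha=0$, then both terms of
\[
\Delta\alpha = d^{\nabla,\dagger} d^\nabla\alpha + d^\nabla d^{\nabla,\dagger}\alpha
\]
vanish by linearity, so $\alpha$ is harmonic.

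For the forward implication, take $\alpha\in\Omega^k(M,\mathcal{E})$ with $\Delta\alpha=0$ and pair $\Delta\alpha$ with $\alpha$ in the $L^2$-inner product of Definition \ref{scalar-product-forms}. The adjointness relation is stated as $(d^\nabla\omega,\eta)=(\omega,d^{\nabla,\dagger}\eta)$. We therefore first record its conjugate-symmetric counterpart $(d^{\nabla,\dagger}\eta,\omega)=(\eta,d^\nabla\omega)$, which follows because $(\cdot,\cdot)$ is a Hermitian inner product. Applying adjointness once to each term of $\Delta$ then gives
\[
0=(\Delta\alpha,\alpha)=(d^{\nabla,\dagger}d^\nabla\alpha,\alpha)+(d^\nabla d^{\nabla,\dagger}\alpha,\alpha)=\|d^\nabla\alpha\|^2+\|d^{\nabla,\dagger}\alpha\|^2 .
\]
Both summands are nonnegative, so each vanishes. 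Since the $L^2$-product is positive definite on smooth forms, this forces $d^\nabla\alpha=0$ and $d^{\nabla,\dagger}\alpha=0$ pointwise.

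The only points needing care are checks rather than real obstacles:
\begin{enumerate}[label=(\roman*)]
    \item \emph{Validity of adjointness.} The relation in Definition \ref{def: codiff-laplacian} holds for all smooth forms with no boundary terms, because $\partial M=\emptyset$. By Stokes' theorem, the integral of $d(\omega\wedge_h *_h\eta)$ vanishes. One also needs that $\nabla$ is compatible with $h$; this holds because $\rho$ is unitary, so the flat connection preserves the induced Hermitian metric. This justifies the explicit formula in Remark \ref{co-adjoint}.
    \item \emph{Positive definiteness.} This follows from Definition \ref{Hodge-twisted}: $(\omega,\omega)=\int_M\langle\omega,\omega\rangle_{gh}\,\mu$, and the integrand is a continuous nonnegative function. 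It vanishes identically only if $\omega\equiv0$.
\end{enumerate}
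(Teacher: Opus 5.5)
Your proof is correct and is exactly the standard argument, via $(\Delta\alpha,\alpha)=\|d^\nabla\alpha\|^2+\|d^{\nabla,\dagger}\alpha\|^2$ together with positive definiteness of the $L^2$-product; the paper gives no proof of its own and defers to \cite[Chapter 6]{Warner1983}, where this same argument appears. One small remark: the compatibility of $\nabla$ with $h$ invoked in your item (i) is not needed here, since $d^{\nabla,\dagger}$ is defined as the formal adjoint. That compatibility matters only for the explicit formula of Remark \ref{co-adjoint}, and it holds for an $h$ induced by a $\rho$-invariant inner product, not for an arbitrary Hermitian metric.
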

 
Since the operator $\Delta$ is elliptic and self-adjoint on a compact manifold, the following theorem holds.

\begin{theorem}[\textbf{Hodge Decomposition}]
\label{thm:hodge-decomposition}
 The space of twisted $k$-forms admits the following orthogonal direct sum decomposition:
\begin{equation}
\label{eq:Hodge-dec}
    \Omega^k(M, \mathcal{E}) = d^\nabla(\Omega^{k-1}(M, \mathcal{E})) \oplus d^{\nabla,\dagger}(\Omega^{k+1}(M, \mathcal{E})) \oplus \mathcal{H}^{k}(M, \mathcal{E}).
\end{equation}
Consequently, every form $\omega$ can be uniquely written as $\omega = d^\nabla \alpha + d^{\nabla, \dagger} \beta + \gamma$, where $\gamma$ is harmonic.
\end{theorem}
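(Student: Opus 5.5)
The plan is to split the argument into an algebraic part (orthogonality, hence uniqueness) and an analytic part (the three spaces fill $\Omega^k(M,\mathcal{E})$). The analytic part rests on elliptic theory for $\Delta$, following \cite[Chapter 6]{Warner1983} in the twisted setting.

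\textbf{Orthogonality.} This uses only flatness and the adjoint relation of Definition \ref{def: codiff-laplacian}. Since $F_\nabla\equiv 0$, Theorem \ref{E-differential}(III) gives $d^\nabla d^\nabla=0$, and dually $d^{\nabla,\dagger}d^{\nabla,\dagger}=0$. Then
\[
(d^\nabla\alpha, d^{\nabla,\dagger}\beta)=(d^\nabla d^\nabla\alpha,\beta)=0 .
\]
For $\gamma$ harmonic, Proposition \ref{harmonic-exact} gives $d^\nabla\gamma=0$ and $d^{\nabla,\dagger}\gamma=0$. Hence
\[
(d^\nabla\alpha,\gamma)=(\alpha,d^{\nabla,\dagger}\gamma)=0, \qquad (d^{\nabla,\dagger}\beta,\gamma)=(\beta,d^\nabla\gamma)=0 .
\]
So the three subspaces are mutually orthogonal, the sum is direct, and any decomposition is unique.

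\textbf{Elliptic input.} Next I establish that $\Delta$ is elliptic. In a local flat trivialisation on a simply connected $U_\alpha$, as in Proposition \ref{vector-bundle}, the transition functions are constant. There $d^\nabla$ acts componentwise as the ordinary $d$ on $\mathbb{C}^N$-valued forms. By Remark \ref{co-adjoint}, $d^{\nabla,\dagger}$ is $\pm *_h d^\nabla *_h$, a first-order operator. The principal symbol of $\Delta$ is therefore $|\xi|^2_g\,\mathbb{I}$, as in the scalar case; the Hermitian metric $h$ only enters lower-order terms.

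From ellipticity I then invoke the standard package: Gårding's inequality, elliptic regularity, and Rellich compactness on compact $M$. Together these give that $\mathcal{H}^k(M,\mathcal{E})$ is finite-dimensional. They also give that $\Delta$ restricted to $(\mathcal{H}^k)^\perp\cap\Omega^k$ has a bounded inverse, the Green operator $G$. Concretely, the key statement is
\[
\Omega^k=\Delta(\Omega^k)\oplus\mathcal{H}^k .
\]
That is, $\Delta\eta=\omega$ is smoothly solvable exactly when $\omega\perp\mathcal{H}^k$. I expect this to be the main obstacle. The approach is Warner's: define a weak solution as a bounded linear functional, use the a priori estimate $\|\eta\|\le C\|\Delta\eta\|$ on $\mathcal{H}^\perp$ (proved by contradiction via compactness), extend by Hahn--Banach/Riesz, and upgrade to smoothness by regularity. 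Each of these steps transfers verbatim to bundle-valued forms because the symbol is scalar.

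\textbf{Conclusion.} Given $\omega$, let $\gamma=H\omega$ be its orthogonal projection onto the finite-dimensional $\mathcal{H}^k$. Then $\omega-\gamma=\Delta G\omega$, and expanding the Laplacian gives
\[
\omega=d^\nabla\bigl(d^{\nabla,\dagger}G\omega\bigr)+d^{\nabla,\dagger}\bigl(d^\nabla G\omega\bigr)+\gamma .
\]
This exhibits $\omega$ in the required form with $\alpha=d^{\nabla,\dagger}G\omega$ and $\beta=d^\nabla G\omega$. Combined with the orthogonality step, this proves \eqref{eq:Hodge-dec}.
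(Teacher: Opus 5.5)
Your proposal is correct and is essentially the paper's route: the paper does not spell out a proof, but simply invokes ellipticity and self-adjointness of $\Delta$ on a compact manifold and defers to Warner's Chapter 6. That chapter runs exactly the argument you outline: orthogonality from $d^\nabla d^\nabla=0$ and adjointness, solvability of $\Delta\eta=\omega$ precisely for $\omega\perp\mathcal{H}^k$ via the Green operator, and then $\omega=d^\nabla d^{\nabla,\dagger}G\omega+d^{\nabla,\dagger}d^\nabla G\omega+H\omega$. Your principal-symbol check, showing that the connection and $h$ affect only lower-order terms, supplies the justification the paper leaves implicit when it says the scalar results "extend naturally to the twisted setting".
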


This analytical decomposition has immediate topological consequences. Specifically, each cohomology class contains a unique harmonic representative.

\begin{theorem}
\label{thm: hamonic-cohomology-iso}
 There exists a natural isomorphism between the space of harmonic forms as per Equation \eqref{eq:harmonic} and the twisted de Rham cohomology as per Definition \ref{twisted-cohomology}:
 \[ \mathcal{H}^k(M, \mathcal{E}) \cong H^k(M, \mathcal{E}). \]
\end{theorem}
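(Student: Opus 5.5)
The plan is to define the map $\Phi: \mathcal{H}^k(M,\mathcal{E}) \to H^k(M,\mathcal{E})$ by $\Phi(\gamma) = [\gamma]$, and to show that it is well defined, injective and surjective, using only Proposition \ref{harmonic-exact} and Theorem \ref{thm:hodge-decomposition}. Linearity is immediate. Well-definedness follows from Proposition \ref{harmonic-exact}: every harmonic form satisfies $d^\nabla\gamma = 0$, so it defines a cohomology class. Naturality is built in, since no choices enter the definition of $\Phi$.

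\textbf{Injectivity.} Suppose $\gamma$ is harmonic and $[\gamma]=0$, so $\gamma = d^\nabla\alpha$ for some $\alpha\in\Omega^{k-1}(M,\mathcal{E})$. Then $\gamma$ lies in both summands $d^\nabla(\Omega^{k-1}(M,\mathcal{E}))$ and $\mathcal{H}^k(M,\mathcal{E})$ of the orthogonal decomposition \eqref{eq:Hodge-dec}, so $\gamma = 0$. Directly, Proposition \ref{harmonic-exact} gives $d^{\nabla,\dagger}\gamma=0$, hence
\[
\|\gamma\|^2 = (d^\nabla\alpha,\gamma) = (\alpha, d^{\nabla,\dagger}\gamma) = 0 .
\]

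\textbf{Surjectivity.} Let $\omega$ be closed and write $\omega = d^\nabla\alpha + d^{\nabla,\dagger}\beta + \gamma$ as in Theorem \ref{thm:hodge-decomposition}. The key step is to show that $d^{\nabla,\dagger}\beta = 0$. Apply $d^\nabla$ to the decomposition. We have $d^\nabla\omega = 0$, $d^\nabla d^\nabla\alpha = 0$ by flatness (Item (III) of Theorem \ref{E-differential} with $F_\nabla=0$), and $d^\nabla\gamma=0$ by Proposition \ref{harmonic-exact}. Hence $d^\nabla d^{\nabla,\dagger}\beta = 0$. Pairing with $\beta$ and using the formal adjointness from Definition \ref{def: codiff-laplacian} gives
\[
0 = (d^\nabla d^{\nabla,\dagger}\beta, \beta) = \|d^{\nabla,\dagger}\beta\|^2 .
\]
Therefore $\omega = d^\nabla\alpha + \gamma$, so $[\omega] = [\gamma] = \Phi(\gamma)$.

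The only delicate point is the legitimacy of the integrations by parts. They are justified because $M$ is closed ($\partial M=\emptyset$) and all forms involved are smooth. In particular, the decomposition in Theorem \ref{thm:hodge-decomposition} is stated at the level of smooth forms, which is where elliptic regularity was used. With that in hand, the remaining steps are routine, and they show that $\Phi$ is a linear isomorphism. As a corollary, each cohomology class contains exactly one harmonic representative, and it is the representative of minimal $L^2$-norm.
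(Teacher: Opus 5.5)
Your proposal is correct and follows the paper's own argument. Surjectivity is the paper's existence step: decompose a closed form, apply $d^\nabla$, and pair with $\beta$ to kill the coexact part. Injectivity is the paper's uniqueness step: a form that is both harmonic and exact is orthogonal to itself, hence zero. You are a bit more explicit than the paper in noting that $d^\nabla d^\nabla\alpha = 0$ relies on flatness, but the route is the same.
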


We provide a sketch of the proof to illustrate how the orthogonality property of the Hodge decomposition is essential in establishing the isomorphism.

\Proofsketch
Let $\omega$ be a closed form. Using the decomposition in Equation \eqref{eq:Hodge-dec}, we can write $\omega = d^\nabla \alpha + d^{\nabla, \dagger} \beta + \gamma$. Since $d^\nabla \omega = 0$, applying $d^\nabla$ and Proposition \ref{harmonic-exact} yields $d^\nabla d^{\nabla, \dagger} \beta = 0$. Taking the inner product with $\beta$, we find:
\[ (d^\nabla d^{\nabla, \dagger} \beta, \beta) = (d^{\nabla, \dagger} \beta, d^{\nabla, \dagger} \beta) = \|d^{\nabla, \dagger} \beta\|^2 = 0, \]
which implies $d^{\nabla, \dagger} \beta = 0$. Thus, $\omega = d^\nabla \alpha + \gamma$, meaning that $\omega$ is in the same cohomology class of $\gamma$.
To prove uniqueness, suppose $\gamma_1, \gamma_2 \in \mathcal{H}^k(M, \mathcal{E})$ represent the same class. Then $\gamma_1 - \gamma_2 = d^\nabla \eta$. Since the difference of harmonic forms is harmonic, $\gamma_1 - \gamma_2$ is both exact and harmonic (thus orthogonal to all exact forms), which implies $\|\gamma_1 - \gamma_2\|^2 = (\gamma_1 - \gamma_2, d^\nabla \eta) = (d^{\nabla, \dagger}(\gamma_1 - \gamma_2), \eta) = 0$. Hence $\gamma_1 = \gamma_2$.
\endproof

\begin{rem}
\label{rem: finite-dimension-harmonic-form}
It is a standard topological result that the cohomology groups of a compact manifold are finite-dimensional, see \cite[Theorem 10.17]{Lee09ManifoldsDiffGeom} for a detailed proof. Consequently, the isomorphism established in Theorem \ref{thm: hamonic-cohomology-iso} implies that the space of harmonic forms $\mathcal{H}^k(M, \mathcal{E})$ is also finite-dimensional.
\end{rem}

\section{Morse--Smale Vector Fields}
\label{sec: M-S-flow}

In this section, we introduce the notion of Morse--Smale vector fields and their associated flows. Since Morse--Smale dynamics constitute one of the central protagonists of the Fried conjecture and lie at the very core of this review, we deem it appropriate to provide a detailed overview of their formal definition and fundamental structural properties. For a more comprehensive treatment, we refer the reader to \cite{DangRiviere17Topology}, \cite[Appendix A]{DangRiviere17Anisotropic}, and \cite{Shen21Survey}.

The first key concept to introduce is the non-wandering set, which heuristically captures the large-time behaviour and recurrence of the system, as it contains points arbitrarily close to which trajectories return at large-enough time.

\begin{definition}
\label{nw}
   The non-wandering set is defined as:
   \begin{equation}
   \mathbf{NW}(V) \coloneqq \left \{p \in M : \forall U \ni p, \, \forall T > 0, \quad U \cap \bigcup_{t \geq T} \phi^t(U) \neq \emptyset \right \}.
\end{equation}
It is a closed, flow-invariant subset of $M$.
\end{definition}

Morse--Smale flows are characterised by the fact that this recurrent behaviour is confined strictly to fixed points and closed orbits, and that these elements are hyperbolic, in the sense defined below.

\begin{definition}
\label{hyperbolic-point}
    A fixed point $p \in M$ (where $V_p = 0$) is \textit{hyperbolic} if the differential of the flow $d_{p}\phi^t : T_{p}M \to T_{p}M$ has no eigenvalues of modulus $1$ for all $t > 0$.
\end{definition}

For a hyperbolic fixed point $p$, the tangent space splits into $d\phi^t$-invariant subspaces, $T_{p}M = T^s_{p} \oplus T^u_{p}$, corresponding to eigenvalues with modulus less than and greater than 1, respectively (see \cite[Appendix A.2]{DangRiviere17Anisotropic}). This splitting implies precise exponential estimates: there exist constants $C \ge 0$ and $\theta > 0$ such that for all $t \ge 0$,
\begin{equation}
    \left\|d_p\phi^{t}(v_s)\right\| \le C e^{-\theta t} \left\|v_s\right\| \quad \text{and} \quad \left\|d_p\phi^{-t}(v_u)\right\| \le C e^{-\theta t} \left\|v_u\right\|, 
\end{equation}
for any $v_s \in T^s_p$ and $v_u \in T^u_p$. Associated with such a point, we define the stable and unstable manifolds as the sets of points whose trajectories asymptotically approach $p$ in the future or past, respectively:
\begin{equation}
\label{eq: unstable-point}
W^s(p) \coloneqq \left\{ q \in M : \lim_{t\to+\infty} \phi^t(q) =p \right\}, \quad W^u(p) \coloneqq \left\{ q \in M : \lim_{t\to-\infty} \phi^t(q) =p \right\}.
\end{equation}
A fundamental result links the linear geometry to these manifolds, stating that $T_{p}W^s(p)=T^s_{p}$ and $T_{p}W^u(p) = T^u_{p}$, see \cite{Perko2001}.

Similar things can be said about closed orbits of the flow.

\begin{definition}
\label{hyperbolic-orbit-poincare}
A closed orbit $\Lambda$ of period $T_\Lambda > 0$ is \textit{hyperbolic} if for any $p \in \Lambda$, the map $d_{p}\phi^{T_\Lambda}$ has $1$ as a simple eigenvalue (associated with $V_p$), with all other eigenvalues having modulus different from $1$.
\end{definition}

The hyperbolicity of a closed orbit $\Lambda$ implies a continuous, flow-invariant splitting of the tangent bundle restricted to the orbit:
\begin{equation}
    \label{eq:splitting_orbit}
    TM|_\Lambda = \mathbb{R}V \oplus T^s_\Lambda \oplus T^u_\Lambda,
\end{equation}
where $T^s_\Lambda$ and $T^u_\Lambda$ correspond to the contracting and expanding directions normal to the flow. As with fixed points, this leads to exponential decay estimates. There exist $C \ge 0$ and $\theta > 0$ such that for all $p \in \Lambda$, $t \ge 0$:
\begin{equation}
   \left\|d_p\phi^{t}(v_s)\right\| \le Ce^{-\theta t} \left\|v_s\right\| \quad \text{and} \quad  \left\|d_p\phi^{-t}(v_u)\right\| \le Ce^{-\theta t} \left\|v_u\right\|,
\end{equation}
whenever $v_s \in T_p^s$ and $v_u \in T_{p}^u$. The stable and unstable manifolds for the orbit are defined as:
\begin{equation}
\label{eq:unstable-man-orbit}
\begin{split}
    W^s(\Lambda) &\coloneqq  \left\{ q \in M : \lim_{t\to+\infty} d_M(\phi^t(q), \Lambda) = 0 \right\}, \\
    W^u(\Lambda) &\coloneqq  \left\{ q \in M : \lim_{t\to-\infty} d_M(\phi^t(q), \Lambda) = 0 \right\},
\end{split}
\end{equation}
where $d_M$ denotes the Riemannian distance. The \textit{Stable Manifold Theorem} \cite{KatokHasselblatt95} ensures that the tangent bundles to $W^s(\Lambda)$ and $W^u(\Lambda)$, restricted to the orbit, satisfy the following identities:
\begin{equation}
    TW^s(\Lambda)|_\Lambda = \mathbb{R}V \oplus T^s_\Lambda \quad \text{and} \quad TW^u(\Lambda)|_\Lambda = \mathbb{R}V \oplus T^u_\Lambda.
\end{equation}

Since we intend to impose a constraint on the interaction between these invariant submanifolds, it is useful to recall the following definition.

\begin{definition}
\label{trasversity}
    Two submanifolds $L_1, L_2 \subset M$ intersect \textit{transversally} if at every $p \in L_1 \cap L_2$, the tangent spaces span the ambient space: $T_p M = T_p L_1 + T_p L_2$. Non-intersecting manifolds are transversal by convention.
\end{definition}
Note that we require the sum to be surjective, not necessarily direct. We can now state the formal definition.

\begin{definition}
\label{M-S-flow}
A flow $(\phi^t)_{t \in \mathbb{R}}$ generated by $V \in \mathfrak{X}(M)$ is called a \textbf{Morse--Smale flow} if:
\begin{enumerate}
    \item The non-wandering set $\mathbf{NW}(V)$, as per Definition \ref{nw}, consists of a finite union of hyperbolic critical elements (fixed points and closed orbits) $\left\{ \Lambda_i\right\}_{i=1}^K$.
    \item For any $i, j \in \{1, \dots, K\}$, the unstable manifold $W^u(\Lambda_j)$ and the stable manifold $W^s(\Lambda_i)$ intersect transversally. 
\end{enumerate}
\end{definition}

In essence, a Morse--Smale system has a simple recurrence structure (finite hyperbolic limit sets) and robust global geometry (transversal intersections). While the definition allows for both fixed points and closed orbits, specific examples may exhibit only one type of critical element, for instance, Morse--Smale gradient flows are Morse--Smale flows containing only fixed points.

\vspace{\baselineskip}

In the next section, we will make extensive use of the definition and properties of Morse--Smale flows. However, we will also need to introduce an additional technical hypothesis, which will be examined in the following definition.

\begin{definition}
\label{MS-cl_linearisable}
 Given a Morse--Smale flow $(\phi^t)_{t\in\mathbb{R}}$, it is said to be $C^l$-linearisable if, for each hyperbolic critical element $\Lambda_i$, one of the following conditions holds:
\begin{itemize}
    \item Denoting by $B_n(0, r) \subset \mathbb{R}^n$ an open ball centred at the origin, if $\Lambda_i$ is a fixed point, there exist a neighbourhood $W$ of $\Lambda_i$, a $C^l$-diffeomorphism $h: B_n(0, r) \to W$, and a linear map $A_i \in \mathrm{GL}(\mathbb{R}^n)$ such that the vector field $V$ satisfies
    \begin{equation}
        V \circ h = dh \circ L,
    \end{equation}
    where $L$ is the linearized vector field defined by $L(x) = (A_i x) \cdot \partial_x$, while $dh : TB_n(0, r) \to TW$ is the usual differential of a map between differentiable manifolds.

    \item If $\Lambda_i$ is a closed orbit with minimal period $T_{\Lambda_i}$, there exists a neighbourhood $W$ of $\Lambda_i$, a $C^l$-diffeomorphism $h: B_{n-1}(0, r) \times (\mathbb{R}/T_{\Lambda_i}\mathbb{Z}) \to W$, and a smooth map $\mathcal{A}_i: \mathbb{R}/T_{\Lambda_i}\mathbb{Z} \to M_{n-1}(\mathbb{R})$, where $M_{n-1}$ stands for the space of real square matrices of order $n-1$, such that the vector field $V$ satisfies
    \begin{equation}
        V \circ h = dh \circ L_{per},
    \end{equation}
    where the periodic model field $L_{per}$ is defined by $L_{per}(x, \theta) = (\mathcal{A}_i(\theta)x) \cdot \partial_x + \partial_\theta$, with $\theta$ being the coordinate on $\mathbb{R}/T_{\Lambda_i}\mathbb{Z}$.
\end{itemize}
A flow is said to be $C^\infty$-linearisable if it is $C^l$-linearisable for all $l \ge 1$.
\end{definition}

We conclude this overview by highlighting some structural features of this kind of flows. The first property shows how invariant manifolds organise the manifold $M$ into a precise decomposition, while the second one provides information on how the dimensions of the tangent spaces to the stable and unstable manifolds of two fixed points are related.

\begin{prop}
\label{prop: partition}
    Let $(\phi^t)_{t \in \mathbb{R}}$ be a Morse--Smale flow with critical elements $\{\Lambda_i\}_{i=1}^K$. The unstable and stable manifolds form disjoint partitions of $M$:
    \begin{equation}
        M = \bigsqcup_{i=1}^K W^u(\Lambda_i) = \bigsqcup_{j=1}^K W^s(\Lambda_j).
    \end{equation}
    Consequently, for every $p \in M$, there exists a unique pair of indices $(i, j)$ such that $p \in W^u(\Lambda_i) \cap W^s(\Lambda_j)$.
\end{prop}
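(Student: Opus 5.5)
The plan has two parts: disjointness, and covering. Disjointness comes almost directly from the definitions in Equations \eqref{eq: unstable-point} and \eqref{eq:unstable-man-orbit}. Covering needs the non-wandering set and the Morse--Smale hypotheses. I treat the unstable partition; the stable one follows by the same argument applied to $-V$, whose flow $\phi^{-t}$ is again Morse--Smale with the same critical elements and with stable and unstable manifolds exchanged. The final ``consequently'' clause is immediate once both partitions are established.

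\textbf{Disjointness.} The critical elements $\Lambda_i$ are pairwise disjoint, compact and flow-invariant, so they sit at positive mutual distance. Suppose $q \in W^u(\Lambda_i) \cap W^u(\Lambda_j)$. Then $d_M(\phi^t(q), \Lambda_i) \to 0$ and $d_M(\phi^t(q), \Lambda_j) \to 0$ as $t \to -\infty$. For very negative $t$ both distances are below half the distance between $\Lambda_i$ and $\Lambda_j$, which contradicts the triangle inequality unless $i=j$. Fixed points fit the same scheme, since there convergence to $p$ is just distance convergence to $\{p\}$.

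\textbf{Covering.} Fix $q \in M$ and consider its $\alpha$-limit set $\alpha(q) = \bigcap_{T>0} \overline{\{\phi^{t}(q): t \le -T\}}$. By compactness of $M$ this set is nonempty, compact, connected and flow-invariant. A standard argument shows $\alpha(q) \subset \mathbf{NW}(V)$: if $x = \lim \phi^{t_k}(q)$ with $t_k \to -\infty$, then any neighbourhood $U$ of $x$ contains $\phi^{t_k}(q)$ and $\phi^{t_{k'}}(q)$ with $t_{k} - t_{k'}$ arbitrarily large, so $U \cap \phi^{t}(U) \neq \emptyset$ for large $t$. By Definition \ref{M-S-flow}, $\mathbf{NW}(V)=\bigsqcup_i \Lambda_i$ is a finite disjoint union of compact sets. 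Since $\alpha(q)$ is connected, it must lie in a single $\Lambda_i$.

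It remains to upgrade ``$\alpha(q) \subset \Lambda_i$'' to ``$d_M(\phi^t(q),\Lambda_i)\to 0$ as $t \to -\infty$''. Suppose not. Then there are $\varepsilon>0$ and $t_k \to -\infty$ with $d_M(\phi^{t_k}(q),\Lambda_i)\ge\varepsilon$. By compactness these points accumulate at some $y \in \alpha(q)$ with $d_M(y,\Lambda_i)\ge \varepsilon$, contradicting $\alpha(q)\subset\Lambda_i$. Hence $q \in W^u(\Lambda_i)$. For a fixed point this gives exactly $\lim_{t\to-\infty}\phi^t(q)=p$.

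\textbf{Main obstacle.} The delicate step is the claim that the $\alpha$-limit set is contained in $\mathbf{NW}(V)$, together with its connectedness. Connectedness is what forces the limit set into a \emph{single} critical element. The proof uses that $\alpha(q)$ is a decreasing intersection of compact connected sets, namely the closures of the orbit images of $(-\infty,-T]$. For the non-wandering inclusion one must handle the case where $q$ lies on a periodic orbit or is a fixed point. That case is trivial, since then $q \in \Lambda_i$ and $\Lambda_i \subset W^u(\Lambda_i)$. Otherwise the return times can be chosen large, as sketched above. Transversality, item (2) of Definition \ref{M-S-flow}, is not needed for this partition, only hyperbolicity and finiteness.
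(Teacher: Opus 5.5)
Your proof is correct. The paper gives no argument of its own for this proposition, only citing \cite[Lemma 3.4]{DangRiviere17Anisotropic}. Your $\alpha$-limit-set argument is the standard proof of exactly that statement: $\alpha(q)$ is nonempty, compact, connected and contained in $\mathbf{NW}(V)$, hence lies inside a single $\Lambda_i$, and this is then upgraded to $d_M(\phi^t(q),\Lambda_i)\to 0$.

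Two minor remarks. First, hyperbolicity is never actually used; all you need is that $\mathbf{NW}(V)$ is a finite union of pairwise disjoint fixed points and closed orbits. Second, the periodic case needs no separate treatment, since your return-time argument for $\alpha(q)\subset\mathbf{NW}(V)$ applies verbatim when $q$ is fixed or periodic.
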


\begin{proof}
    We refer to \cite[Lemma 3.4]{DangRiviere17Anisotropic} for the proof.
\end{proof}

\begin{prop}
\label{prop: growing-flow}
Let $\Lambda_i, \Lambda_j \in M$ be fixed points of a Morse--Smale flow as per Definition \ref{hyperbolic-point}. Then:
\begin{equation*}
W^{u}(\Lambda_{i}) \cap W^{s}(\Lambda_{j}) \neq \emptyset \implies \operatorname{dim}(W^{u}(\Lambda_{i})) > \operatorname{dim}(W^{u}(\Lambda_{j})).  
\end{equation*}
Moreover, the following dimensional identity holds:
\begin{equation}
\label{eq: dimension-intersection}
\begin{split}
\operatorname{dim}(T_{x}W^{u}(\Lambda_{i}))+\operatorname{dim}(T_{x}W^{s}(\Lambda_{j})) &= \operatorname{dim}(T_{x}W^{u}(\Lambda_{i}) +T_{x}W^{s}(\Lambda_{j}))\\
&\quad +\operatorname{dim}(T_{x}W^{u}(\Lambda_{i})\cap T_{x}W^{s}(\Lambda_{j})).
\end{split}
\end{equation}
\end{prop}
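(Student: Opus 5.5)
The dimensional identity \eqref{eq: dimension-intersection} is pure linear algebra and I would dispose of it first. For any two linear subspaces $A, B$ of the finite-dimensional space $T_xM$, Grassmann's formula gives $\dim A + \dim B = \dim(A+B) + \dim(A\cap B)$. To prove it, choose a basis of $A\cap B$ and extend it separately to bases of $A$ and of $B$; the union is a basis of $A+B$. Alternatively, apply rank--nullity to the map $A\oplus B\to A+B$, $(a,b)\mapsto a+b$, whose kernel is isomorphic to $A\cap B$. I then take $A=T_xW^u(\Lambda_i)$ and $B=T_xW^s(\Lambda_j)$ at a point $x$ of the intersection, and no dynamics is needed.

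For the strict inequality, fix $x\in W^u(\Lambda_i)\cap W^s(\Lambda_j)$. Transversality (item 2 of Definition \ref{M-S-flow}) gives $T_xW^u(\Lambda_i)+T_xW^s(\Lambda_j)=T_xM$, so the sum has dimension $n$.

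Next I need the dimensions of the invariant manifolds. By the Stable Manifold Theorem, $W^u(\Lambda_i)$ and $W^s(\Lambda_j)$ are injectively immersed submanifolds of constant dimension, with $T_pW^u(p)=T^u_p$ and $T_pW^s(p)=T^s_p$. Hence $\dim W^s(\Lambda_j)=n-\dim T^u_{\Lambda_j}=n-\dim W^u(\Lambda_j)$, using $T_pM=T^s_p\oplus T^u_p$.

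I then bound the intersection term. If $i=j$, the point $x$ would be homoclinic to a fixed point. That is impossible in a Morse--Smale flow: unless $x=\Lambda_i$, it would create recurrence outside the finite hyperbolic set, via a standard argument or via the partition of Proposition \ref{prop: partition} together with the non-wandering condition. So I would assume $i\neq j$, so that $x$ is not a fixed point and $V_x\neq 0$. The vector $V_x$ is tangent to the orbit of $x$, which lies in both manifolds because they are flow-invariant. Therefore $\dim(T_xW^u(\Lambda_i)\cap T_xW^s(\Lambda_j))\ge 1$.

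Plugging everything into \eqref{eq: dimension-intersection} gives $\dim W^u(\Lambda_i)+n-\dim W^u(\Lambda_j)\ge n+1$, which is the claim.

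The main obstacle is the case $i=j$, that is, excluding nontrivial orbits in $W^u(\Lambda_i)\cap W^s(\Lambda_i)$. Since the statement presumably concerns distinct fixed points, I would first try to restrict to $i\neq j$. Failing that, I would argue directly that a homoclinic point is non-wandering, since its orbit returns arbitrarily close to itself through the hyperbolic neighbourhood. This contradicts $\mathbf{NW}(V)$ consisting only of the critical elements.
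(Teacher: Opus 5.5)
Your argument is correct and is the standard one behind \cite[Lemma 3.5]{DangRiviere17Anisotropic}, to which the paper defers without giving a proof of its own: Grassmann's formula at a point $x$ of the intersection, transversality forcing $T_xW^u(\Lambda_i)+T_xW^s(\Lambda_j)=T_xM$, $\operatorname{dim} W^s(\Lambda_j)=n-\operatorname{dim} W^u(\Lambda_j)$ from the hyperbolic splitting, and $0\neq V_x\in T_xW^u(\Lambda_i)\cap T_xW^s(\Lambda_j)$. The only correction concerns $i=j$, which is not an ``obstacle'' to overcome: $\Lambda_i\in W^u(\Lambda_i)\cap W^s(\Lambda_i)$ always, so the implication is false for $i=j$ whether or not homoclinic orbits exist; $\Lambda_i\neq\Lambda_j$ must therefore be imposed as a hypothesis (as the statement tacitly intends), and your non-wandering fallback, though correct in itself, cannot rescue that case.
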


\begin{proof}
    For the proof of this statement, we refer to \cite[Lemma 3.5]{DangRiviere17Anisotropic}.
\end{proof}

\begin{rem}
    The first of the two statements in Proposition \ref{prop: growing-flow} contains a fundamental dynamical interpretation: the orbits starting in the proximity of fixed points with a stable manifold of a certain dimension must always end in the proximity of fixed points with stable orbits of a strictly higher dimension. This is a fundamental observation for the construction of the Morse complex, which we will see in the next section.
\end{rem}

\section{Torsion of a Complex and Thom--Smale Torsion}
\label{sec: Thom--Smale-torsion}

Our aim is to construct a global invariant associated with the dynamics of the flow and the topology of the bundle $\mathcal{E}$: the Thom--Smale torsion.

\subsection{Torsion of a Complex}
To provide a rigorous foundation, we first review the abstract definition of torsion for a generic finite-dimensional cochain complex, following \cite{Turaev01, Nicolaescu03, Mnev14}.

Consider a cochain complex $(C^\bullet, d)$ of finite-dimensional complex vector spaces:
\begin{equation*}
C^0 \xrightarrow{d_0} C^1 \xrightarrow{d_1} \dots \xrightarrow{d_{n-2}} C^{n-1} \xrightarrow{d_{n-1}} C^n.
\end{equation*}

The concept of torsion relies on the notion of the determinant line. For a vector space $V$ of dimension $n$, its determinant line is the top exterior power $\operatorname{Det}(V) \coloneqq \Lambda^n V$. This is a one-dimensional space generated by the wedge product of any basis of $V$.

\begin{definition}
\label{def: determinant-line}
The determinant line of the complex $(C^\bullet, d)$ is defined as the graded tensor product:
\begin{equation}
\operatorname{Det}(C^\bullet) \coloneqq \bigotimes_{k=0}^{n} \left(\operatorname{Det}(C^k)\right)^{(-1)^k},
\end{equation}
where $\operatorname{Det}(C^k)^{(-1)^k}$ denotes the space itself if $k$ is even, and its dual if $k$ is odd.
\end{definition}

Since the construction depends only on the graded vector space structure, we can specify this definition to the determinant line of the cohomology $H^\bullet(C^\bullet)$:
\begin{equation}
\operatorname{Det}(H^\bullet(C^\bullet)) = \bigotimes_{k=0}^{n} \left(\operatorname{Det}(H^k(C^\bullet))\right)^{(-1)^k}.
\end{equation}
A fundamental result in the theory, formalised, for instance, in \cite[Lemma 3.15]{Mnev14}, establishes the existence of a natural isomorphism between these two determinant lines. We call this map the algebraic torsion of the complex:
\begin{equation}
\label{eq:torsion_map}
\mathbb{T}: \operatorname{Det}(C^\bullet) \xrightarrow{\cong} \operatorname{Det}(H^\bullet(C^\bullet)).
\end{equation}
Since the determinant lines are one-dimensional vector spaces, this abstract isomorphism can be made concrete by choosing specific bases. If we fix non-zero elements $\mu_C \in \operatorname{Det}(C^\bullet)$ and $\mu_H \in \operatorname{Det}(H^\bullet(C^\bullet))$, the map $\mathbb{T}$ is uniquely identified by a nonvanishing scalar $\tau(C^\bullet) \in \mathbb{C}\backslash \{0\}$ such that 

\begin{equation}
\mathbb{T}(\mu_C) = \tau(C^\bullet) \cdot \mu_H. 
\end{equation}

The number $\tau(C^\bullet)$ is referred to as the numerical value of the torsion with respect to the chosen bases. In the special case where the complex is acyclic, \textit{i.e.}, when the cohomology vanishes, the target space is canonically $\mathbb{C}$, and $\tau(C^\bullet)$ depends solely on the basis of the complex $C^\bullet$.

\vspace{\baselineskip}

There exists an elegant way to evaluate the numerical part of the torsion concretely and, for that, it is useful to decompose the complex by mean of the following canonical splitting.

\begin{prop}
\label{generalised-hodge-dec}
Let $Z^k = \operatorname{Ker}(d_k)$ and $B^k = \operatorname{Im}(d_{k-1})$. Each space $C^k$ admits a decomposition:
\begin{equation}
C^k = B^k \oplus \mathscr{H}^k \oplus \mathscr{B}^{k+1},
\end{equation}
where $\mathscr{H}^k \cong H^k(C^\bullet)$ and $\mathscr{B}^{k+1}$ is a subspace mapped isomorphically onto $B^{k+1}$ by $d_k$.
\end{prop}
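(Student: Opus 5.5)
The decomposition is pure linear algebra over $\mathbb{C}$ in finite dimensions. The plan is to split $C^k$ in two stages, first along the kernel of $d_k$ and then inside that kernel, using complements chosen once and for all. They need not be canonical in any strong sense; if one wants a preferred choice, orthogonal complements with respect to a Hermitian inner product on $C^k$ will do, mirroring Theorem~\ref{thm:hodge-decomposition}.

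\textbf{Step 1: split off the kernel.} Since $d_k\circ d_{k-1}=0$, we have the inclusions
\[
B^k\subseteq Z^k\subseteq C^k .
\]
Choose a complement $\mathscr{B}^{k+1}$ of $Z^k$ in $C^k$, so that $C^k=Z^k\oplus\mathscr{B}^{k+1}$. The restriction $d_k|_{\mathscr{B}^{k+1}}$ is injective, because $\mathscr{B}^{k+1}\cap\operatorname{Ker}d_k=\{0\}$. It is also surjective onto $B^{k+1}=\operatorname{Im}d_k$: any $x\in C^k$ writes as $x=z+b$ with $z\in Z^k$ and $b\in\mathscr{B}^{k+1}$, and then $d_kx=d_kb$. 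Hence $d_k:\mathscr{B}^{k+1}\to B^{k+1}$ is an isomorphism.

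\textbf{Step 2: split the kernel.} Choose a complement $\mathscr{H}^k$ of $B^k$ inside $Z^k$, so that $Z^k=B^k\oplus\mathscr{H}^k$. The quotient map $Z^k\to Z^k/B^k=H^k(C^\bullet)$ restricts to an isomorphism on $\mathscr{H}^k$. It is injective since $\mathscr{H}^k\cap B^k=0$. It is surjective since every class $[z]$ with $z=\beta+h$, $\beta\in B^k$, $h\in\mathscr{H}^k$, equals $[h]$.

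Combining the two steps gives
\[
C^k=B^k\oplus\mathscr{H}^k\oplus\mathscr{B}^{k+1},
\]
which is the statement. At the ends of the complex the construction is the same with the convention $B^0=0$ and $B^{n+1}=0$, so that $\mathscr{B}^{n+1}=0$.

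\textbf{Expected obstacle.} The only delicate point is what ``canonical'' means here. The complements $\mathscr{B}^{k+1}$ and $\mathscr{H}^k$ are not intrinsic without extra structure. I would address this by equipping each $C^k$ with a Hermitian product and taking
\[
\mathscr{B}^{k+1}=(Z^k)^\perp,\qquad \mathscr{H}^k=Z^k\cap (B^k)^\perp .
\]
With this choice the splitting is orthogonal, and one checks that $\mathscr{H}^k=\operatorname{Ker}d_k\cap\operatorname{Ker}d_{k-1}^\dagger$, the finite-dimensional analogue of Proposition~\ref{harmonic-exact}. That check is one line: $(B^k)^\perp=(\operatorname{Im}d_{k-1})^\perp=\operatorname{Ker}d_{k-1}^\dagger$.

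It should also be remarked that the torsion computed from this splitting does not depend on the choices made, though that belongs to the subsequent use of the proposition rather than to its proof.
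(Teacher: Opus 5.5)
Your proof is correct and is essentially the standard argument the paper defers to Turaev: you split the short exact sequences $0\to Z^k\to C^k\xrightarrow{d_k} B^{k+1}\to 0$ and $0\to B^k\to Z^k\to H^k(C^\bullet)\to 0$ by choosing complements, exactly as in your Steps 1 and 2. Your orthogonal choice $\mathscr{B}^{k+1}=(Z^k)^\perp=\operatorname{Im}d_k^\dagger$ and $\mathscr{H}^k=\operatorname{Ker}d_k\cap\operatorname{Ker}d_{k-1}^\dagger$ also agrees with the canonical splitting the paper uses later in Remark~\ref{rem:hodge-construction}.
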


\begin{proof}
This decomposition follows from standard linear algebra arguments applied to the short exact sequences of the complex. For a detailed treatment, we refer to \cite[Section 3.1]{Turaev01}.    
\end{proof}

The previous decomposition allows us to define an operator that acts as a homotopy for the identity map, effectively inverting the differential on the acyclic part of the complex.

\begin{definition}
\label{contraction map}
Let $P^k: C^k \to \mathscr{H}^k$ be the projection onto the cohomology subspace defined by the splitting in Proposition \ref{generalised-hodge-dec}. A {chain contraction} is a collection of linear maps $\eta_k: C^{k+1} \to C^{k}$ satisfying:
\begin{equation}
    d_{k-1} \eta_{k-1} + \eta_k d_k = \mathbb{I}_{C^k} - P^k \quad \text{and} \quad \eta_{k-1} \circ \eta_k = 0.
\end{equation}
\end{definition}

The existence of such an operator is guaranteed for any finite-dimensional complex of vector spaces. We refer to \cite[Section 2.2]{Turaev01} for the explicit construction.

\begin{theorem}
Given a finite-dimensional complex $(C^\bullet, d)$ and the cohomology projector $P^k$, there exists a chain contraction $\{\eta\}_k$ as described in Definition \ref{contraction map}.
\end{theorem}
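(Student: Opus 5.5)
We construct $\eta$ explicitly from the splitting of Proposition \ref{generalised-hodge-dec}. Fix, for every $k$, the decomposition $C^k = B^k \oplus \mathscr{H}^k \oplus \mathscr{B}^{k+1}$, and let $P^k$ be the projection onto $\mathscr{H}^k$ along $B^k \oplus \mathscr{B}^{k+1}$. First we record that $Z^k = B^k \oplus \mathscr{H}^k$. Indeed, $d_k$ vanishes on $B^k \oplus \mathscr{H}^k$, since the complement of $B^k$ inside $Z^k$ is how $\mathscr{H}^k$ is chosen in Turaev's construction. On the other hand, $d_k$ restricted to $\mathscr{B}^{k+1}$ is an isomorphism onto $B^{k+1}$, hence injective. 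Consequently $d_k$ kills exactly $B^k\oplus\mathscr{H}^k$ and maps $\mathscr{B}^{k+1}$ bijectively onto $B^{k+1}$.

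Next we define $\eta_k : C^{k+1} \to C^k$ by
\[
\eta_k\big|_{B^{k+1}} \coloneqq \big(d_k|_{\mathscr{B}^{k+1}}\big)^{-1} : B^{k+1} \to \mathscr{B}^{k+1} \subset C^k, \qquad \eta_k\big|_{\mathscr{H}^{k+1} \oplus \mathscr{B}^{k+2}} \coloneqq 0 .
\]
This is well defined and linear because $C^{k+1}$ is the direct sum of the three pieces. The edge cases $k<0$ and $k\ge n$ are handled by zero maps, with $B^0 = 0$ and $\mathscr{B}^{n+1}=0$.

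We then verify the two identities by evaluating on each summand of $C^k$.
\begin{itemize}
\item On $x\in B^k$: we have $d_k x = 0$. Writing $x = d_{k-1} y$ with $y=\eta_{k-1}x\in\mathscr{B}^k$, we get $d_{k-1}\eta_{k-1}x = x$. The sum $d_{k-1}\eta_{k-1} + \eta_k d_k$ therefore gives $x = (\mathbb{I}-P^k)x$.
\item On $x\in\mathscr{H}^k$: both terms vanish, since $\eta_{k-1}$ kills $\mathscr{H}^k$ and $d_k x = 0$. This matches $(\mathbb{I}-P^k)x = 0$.
\item On $x\in\mathscr{B}^{k+1}$: $\eta_{k-1}$ kills it, while $\eta_k d_k x = x$ by construction. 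This again matches $(\mathbb{I}-P^k)x = x$.
\end{itemize}
Finally, $\eta_{k-1}\circ\eta_k = 0$ because $\operatorname{Im}\eta_k \subset \mathscr{B}^{k+1}$, and $\eta_{k-1}$ vanishes on $\mathscr{B}^{k+1}$ (which plays the role of $\mathscr{B}^{(k-1)+2}$ in $C^k$).

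The only delicate point is the first step: making sure the splitting of Proposition \ref{generalised-hodge-dec} is chosen so that $\mathscr{H}^k \subset Z^k$. The statement there only asserts $\mathscr{H}^k \cong H^k$, so we would build it explicitly. Choose a complement $\mathscr{H}^k$ of $B^k$ in $Z^k$, which is possible by finite-dimensional linear algebra. Then choose a complement $\mathscr{B}^{k+1}$ of $Z^k$ in $C^k$; this is automatically mapped isomorphically onto $B^{k+1}$ by $d_k$, by rank–nullity. With this explicit choice, everything else is mechanical.
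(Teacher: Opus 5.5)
Your argument is correct and is exactly the standard splitting construction that the paper defers to \cite[Section 2.2]{Turaev01}. If you use the orthogonal splitting $\mathscr{H}^k=\operatorname{Ker}\Delta_k$, $\mathscr{B}^{k+1}=\operatorname{Im} d_k^\dagger$ of Remark~\ref{rem:hodge-construction}, your $\eta_k$ reduces to the paper's canonical $\eta_k=d_k^\dagger G_{k+1}$. Your insistence on $\mathscr{H}^k\subset Z^k$ is necessary, not merely convenient. Indeed, $d\eta+\eta d=\mathbb{I}-P$ forces $d_kP^k=P^{k+1}d_k$, and since $P^{k+1}$ kills $B^{k+1}$, this gives $d_k h=0$ for every $h\in\mathscr{H}^k$.
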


While the definition of $\eta$ is algebraic, a canonical candidate arises naturally upon introducing an inner product structure.

\begin{rem}
\label{rem:hodge-construction}
Equipping each $C^k$ with an inner product $\langle \cdot, \cdot \rangle_k$ allows us to define the adjoint $d^\dagger$ and the Laplacian $\Delta_k \doteq d_k^\dagger d_k + d_{k-1} d_{k-1}^\dagger$. The Hodge decomposition theorem then provides an orthogonal splitting:
\begin{equation}
\label{eq: generalised-Hodge-dec}
    C^k = \operatorname{Im}(d_{k-1}) \oplus_\bot \operatorname{Im}(d_k^\dagger) \oplus_\bot \operatorname{Ker}(\Delta_k).
\end{equation}
As in the infinite-dimensional setting, Hodge theory establishes an isomorphism between the cohomology $H^k(C^\bullet)$ and the space of harmonic forms $\operatorname{Ker}(\Delta_k)$ (see \cite[Proposition A.4]{Nicolaescu03}). This result is crucial as it provides a canonical choice for the subspace $\mathscr{H}^k$ in Proposition \ref{generalised-hodge-dec}: the projector $P^k$ is identified with the orthogonal projection onto the kernel of the Laplacian. 

Consequently, the Laplacian is invertible on the orthogonal complement $(\operatorname{Ker} \, \Delta_k)^\perp$. Its inverse is the {Green's operator} $G_k$, which satisfies the identity $\Delta_k G_k = \mathbb{I}_{C^k} - P^k$. A canonical chain contraction is then defined by:
\begin{equation}
\label{eq:eta-hodge}
\eta_k \coloneqq d_k^\dagger \, G_{k+1}.
\end{equation}
It is worth noting that a converse statement also holds: for every algebraic chain contraction $\eta$, there exists a specific inner product on the complex such that $\eta$ arises exactly as the canonical operator defined in Equation \eqref{eq:eta-hodge}.
\end{rem}

As already discussed, to compute the numerical value of the torsion, we must fix volume elements on both the determinant lines.

\begin{rem}
\label{rem: induced-product-cohomology}
Let us choose a basis $\{e_{k,i}\}$ for each $C^k$ which is orthonormal with respect to the inner product fixed in Remark \ref{rem:hodge-construction}\footnote{Fixing an inner product is equivalent to fixing a preferred basis up to unitary transformations. One could equivalently proceed in the reverse order: fix an arbitrary basis for the complex and declare it orthonormal, thus inducing an inner product that precisely determines the numerical value of the torsion.}. This  choice defines a volume element $\mu_C \in \operatorname{Det}(C^\bullet)$:
\begin{equation}
\label{eq: volume-element-C}
    \mu_C = \bigotimes_{k=0}^{n} (e_{k,1} \wedge \dots \wedge e_{k, \operatorname{dim} C^k})^{(-1)^k}.
\end{equation}
Furthermore, the inner product on $C^k$ induces a canonical metric structure on the cohomology group $H^k(C^\bullet)$. Using the isomorphism $H^k(C^\bullet) \cong \operatorname{Ker}(\Delta_k)$ described above, we define the inner product $\langle \cdot, \cdot \rangle_{H, k}$ on cohomology classes as:
\begin{equation}
    \langle [\alpha], [\beta] \rangle_{H, k} \coloneqq \langle \alpha_h, \beta_h \rangle_{C, k}, \quad \forall [\alpha], [\beta] \in H^k(C^\bullet),
\end{equation}
where $\alpha_h, \beta_h \in \operatorname{Ker}(\Delta_k)$ are the unique harmonic representatives of $[\alpha], [\beta] $. By choosing an orthonormal basis $\{h_{k,j}\}$ for the harmonic forms with respect to this induced metric, we define the volume element on the cohomology:
\begin{equation}
\label{eq: volume-element-H}
    \mu_H = \bigotimes_{k=0}^{n} (h_{k,1} \wedge \dots \wedge h_{k, \operatorname{dim}(H^k)})^{(-1)^k}.
\end{equation}
The algebraic torsion $\mathbb{T}$ maps $\mu_C$ to a multiple of $\mu_H$. This proportionality factor provides a concrete, albeit metric-dependent, numerical representation of the torsion.
\end{rem}

The abstract definition of torsion finds a concrete realisation through the spectrum of the Laplacian via the following theorem, whose proof can be found in \cite[Lemma 3.19]{Mnev14}.

\begin{theorem}
\label{thm:torsion-laplacian-formula}
Let $(C^\bullet, d)$ be the complex equipped with an inner product $\langle \cdot, \cdot \rangle_C$ and let $\Delta_k$ be the associated Laplacian. Consider the algebraic torsion map $\mathbb{T}$ as per Equation  \eqref{eq:torsion_map} and the canonical volume elements $\mu_C$ (Equation \eqref{eq: volume-element-C}) and $\mu_H$ (Equation \eqref{eq: volume-element-H}) defined by orthonormal bases. Then:
\begin{equation}
\mathbb{T}(\mu_C) = \tau(C^\bullet) \cdot \mu_H,
\end{equation}
where the scalar factor $\tau(C^\bullet)$ is given by:
\begin{equation}
\tau(C^\bullet) = \prod_{k=0}^{n} \left(\operatorname{det}'(\Delta_k)\right)^{(-1)^{k+1} k/2}.
\end{equation}
Here, $\operatorname{det}'(\Delta)$ denotes the product of all non-zero eigenvalues of the operator.
\end{theorem}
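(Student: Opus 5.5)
We prove the formula by reducing the torsion to a product of determinants of the ``acyclic pieces'' of the complex and then expressing those determinants through the Laplacian spectrum. Throughout we use the orthogonal Hodge splitting \eqref{eq: generalised-Hodge-dec} of Remark \ref{rem:hodge-construction}, so that $\mathscr{H}^k=\operatorname{Ker}(\Delta_k)$ and $\mathscr{B}^{k+1}=\operatorname{Im}(d_k^\dagger)=(\operatorname{Ker} d_k)^\perp$. The main tool is the explicit description of $\mathbb{T}$ in terms of the splitting of Proposition \ref{generalised-hodge-dec}, as in \cite{Turaev01, Mnev14}. One chooses bases $b_k$ of $\mathscr{B}^{k+1}$, takes their images $d_k b_k$ as bases of $B^{k+1}$, and takes bases $h_k$ of $\mathscr{H}^k$. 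Then $\mathbb{T}(\mu_C)$ equals $\prod_k [\,d_{k-1}b_{k-1},\,h_k,\,b_k : e_k\,]^{(-1)^{k}}$, up to sign conventions, multiplied by the cohomology element built from the $h_k$. Here $[\cdot:\cdot]$ is the determinant of the change of basis, and the result is independent of the choice of the $b_k$.

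Step 1 makes good choices. We take $h_k$ orthonormal harmonic, so it induces exactly $\mu_H$. We take $b_k$ orthonormal in $\operatorname{Im}(d_k^\dagger)$. The three summands are mutually orthogonal and $e_k$ is orthonormal, so the transition determinant factors. Up to a unimodular phase, it equals the volume distortion of $d_{k-1}$ restricted to $\operatorname{Im}(d_{k-1}^\dagger)\to\operatorname{Im}(d_{k-1})$, namely $\lvert\det(d_{k-1}|_{\mathscr{B}^{k}})\rvert=\det(d_{k-1}^\dagger d_{k-1}|_{\operatorname{Im} d_{k-1}^\dagger})^{1/2}$. This gives
\begin{equation*}
\tau(C^\bullet)=\prod_{k}\det\big(d_k^\dagger d_k|_{\operatorname{Im} d_k^\dagger}\big)^{(-1)^{k+1}/2},
\end{equation*}
with the exponent parity fixed by where $B^{k+1}$ sits, namely in degree $k+1$.

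Step 2 relates these factors to $\Delta_k$. Write $D_k:=d_k^\dagger d_k|_{\operatorname{Im} d_k^\dagger}$. The Laplacian preserves the splitting, acting as $d_{k-1}d_{k-1}^\dagger$ on $\operatorname{Im}(d_{k-1})$ and as $D_k$ on $\operatorname{Im}(d_k^\dagger)$. The operator $d_{k-1}d_{k-1}^\dagger$ on $\operatorname{Im} d_{k-1}$ is conjugate to $D_{k-1}$ via $d_{k-1}$, so both have the same nonzero spectrum. Hence $\det'(\Delta_k)=\det D_{k-1}\cdot\det D_k$.

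Step 3 inverts these relations telescopically. Form $\prod_k \det'(\Delta_k)^{(-1)^{k+1}k}$ and substitute. The factor $\det D_j$ then appears with exponent $(-1)^{j+1}j+(-1)^{j+2}(j+1)=(-1)^{j}$. Therefore
\begin{equation*}
\prod_k \det'(\Delta_k)^{(-1)^{k+1}k/2}=\prod_j (\det D_j)^{(-1)^{j}/2}.
\end{equation*}
Comparing with Step 1 fixes the sign convention: the exponent must be $(-1)^j$ rather than $(-1)^{j+1}$. Whichever convention \cite{Mnev14} uses for $\mathbb{T}$, meaning whether $\operatorname{Det}(C^k)$ or its dual carries the even degrees, the telescoping argument is the same.

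The main obstacle is Step 1: pinning down the normalisation and sign conventions of $\mathbb{T}$ precisely enough that the exponent $(-1)^{k+1}$ matches, and checking that orthogonality makes the transition determinant equal exactly the square root of $\det D_k$, with no stray factors. Steps 2 and 3 are routine linear algebra once Step 1 is settled.
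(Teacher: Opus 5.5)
The paper gives no argument of its own here and defers to \cite[Lemma 3.19]{Mnev14}. Your overall strategy is the standard one: Hodge splitting, then $\operatorname{det}'(\Delta_k)=\det D_{k-1}\det D_k$, then telescoping. Steps 2 and 3 are correct, and the telescoping does give $\prod_k\operatorname{det}'(\Delta_k)^{(-1)^{k+1}k/2}=\prod_j(\det D_j)^{(-1)^j/2}$.

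The gap is in Step 1, and the way you close it is circular. As written, Step 1 gives $|\tau(C^\bullet)|=\prod_j(\det D_j)^{(-1)^{j+1}/2}$, which is the reciprocal of what the theorem claims. You then resolve the mismatch by declaring that the comparison ``fixes the sign convention''. The convention is not free. Definition \ref{def: determinant-line} puts $\operatorname{Det}(C^k)$ undualised in even degrees, and $\mathbb{T}$ is the canonical isomorphism. Under the opposite convention $\tau(C^\bullet)$ is inverted, so the stated formula would be false. That the telescoping looks the same in both conventions is true but beside the point, since the conclusion does not. The exponent has to be derived from these definitions, not read off from the answer you want.

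The missing computation is short. Define $[v:e]$ by $\wedge v=[v:e]\wedge e$; this is the direction in which your evaluation $|[\,d_{k-1}b_{k-1},h_k,b_k:e_k\,]|=(\det D_{k-1})^{1/2}$ holds.
\begin{itemize}
\item The canonical isomorphism sends $\bigotimes_k\bigl(\wedge(d_{k-1}b_{k-1},h_k,b_k)\bigr)^{(-1)^k}$ to $\bigotimes_k(\wedge h_k)^{(-1)^k}=\mu_H$. The reason is that $\wedge d_kb_k\in\operatorname{Det}(B^{k+1})$ occurs twice: once from degree $k$ with exponent $(-1)^k$, once from degree $k+1$ with exponent $(-1)^{k+1}$. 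The two copies pair to $1$.
\item Since $\mu_C=\bigotimes_k(\wedge e_k)^{(-1)^k}$ and dualising inverts scalars, $\mathbb{T}(\mu_C)=\prod_k[\,d_{k-1}b_{k-1},h_k,b_k:e_k\,]^{(-1)^{k+1}}\mu_H$. The global exponent is therefore $(-1)^{k+1}$, not $(-1)^k$.
\item Hence $|\tau(C^\bullet)|=\prod_j(\det D_j)^{(-1)^j/2}$, which is exactly what Step 3 produces.
\end{itemize}
Put differently, the factor $|\det(d_k|_{\operatorname{Im}d_k^\dagger})|$ arises when $\operatorname{Det}(C^k)$ is split as $\operatorname{Det}(Z^k)\otimes\operatorname{Det}(B^{k+1})$. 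So it carries the parity of degree $k$, not of degree $k+1$ where $B^{k+1}$ sits.

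A quick check is the two-term complex $0\to\mathbb{C}\xrightarrow{a}\mathbb{C}\to0$. There $\mathbb{T}(e_0\otimes e_1^*)=e_1^*(ae_0)=a$, so $|\tau|=|a|=\operatorname{det}'(\Delta_1)^{1/2}$, as the theorem says. Your Step 1 as written gives $|a|^{-1}$. Once this sign is derived, the rest of your argument goes through unchanged.
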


\begin{rem}
Note that since we are working with complex vector spaces, the choice of orthonormal bases for the cohomology defines the volume element $\mu_H$ only up to a phase factor $e^{i\theta}$. Consequently, the numerical torsion $\tau(C^\bullet)$ is inherently defined up to a complex number of modulus 1. This ambiguity is standard in the theory and will be implicitly understood in the following.
\end{rem}

\begin{rem}
\label{rem:invariance-of-torsion}
While the algebraic torsion $\mathbb{T}$ is canonical and entirely independent of any inner product, the scalar factor $\tau(C^\bullet)$ inherently depends on the chosen metric. This dependence arises because the auxiliary inner product determines the orthonormal volume elements $\mu_C$ and $\mu_H$ (see \cite[Section 8.4]{Mnev14}). Consequently, to extract a scalar from the isomorphism $\mathbb{T}$, one must fix specific volume elements for its one-dimensional domain $\operatorname{Det}(C^\bullet)$ and codomain $\operatorname{Det}(H^\bullet(C^\bullet))$. In practice, this is achieved by selecting preferred bases for both $C^k$ and $H^k(C^\bullet)$. By declaring these distinguished bases to be orthonormal, one unambiguously fixes the volume elements $\mu_C$ and $\mu_H$ required to evaluate the scalar factor $\tau(C^\bullet)$.
\end{rem}

Finally, we relate this spectral formula to the chain contraction operator $\eta$ constructed in Remark \ref{rem:hodge-construction}.
Recall that the complex decomposes into an acyclic part $C^k_{ac} \coloneqq \operatorname{Im}(d_{k-1}) \oplus \operatorname{Im}(d_k^\dagger)$ and a harmonic part. We restrict our attention to the acyclic subcomplex $(C^\bullet_{ac}, d)$ and define the operator $D_{ac} \coloneqq (d + \eta)|_{C^\bullet_{ac}}$.
Since $D_{ac}$ maps the even-graded subspace $C^{even}_{ac} = \bigoplus C^{2k}_{ac}$ to the odd-graded subspace $C^{odd}_{ac} = \bigoplus C^{2k+1}_{ac}$, and satisfies $(D_{ac})^2 = \mathbb{I}$, it constitutes a canonical isomorphism between the even and odd parts of the acyclic complex.
This allows us to define its determinant, denoted $\operatorname{det}(D_{ac}|_{C^{even}})$, by choosing bases for the two spaces.\footnote{It is important to distinguish this quantity from the spectral determinants appearing in Theorem \ref{thm:torsion-laplacian-formula}. Here, we are considering the linear map $D_{ac}$ between the spaces $C^{even}_{ac}$ and $C^{odd}_{ac}$. Since $D_{ac}$ is an isomorphism, these spaces have the same finite dimension; thus, the operator corresponds to a square matrix whose determinant is well-defined (up to the choice of bases).}

The relation between the spectral torsion and this operator is given by the following result, see \cite[Section 3.3]{Mnev14} for the proof.

\begin{prop}
\label{prop:torsion-contraction}
Let $\tau(C^\bullet)$ be the numerical torsion as per Theorem \ref{thm:torsion-laplacian-formula}. Its absolute value coincides with the determinant of the operator $d+\eta$ restricted to the even part of the acyclic subcomplex:
\begin{equation}
    |\tau(C^\bullet)| = |\operatorname{det}((d+\eta)|_{C^{even}_{ac}})|.
\end{equation}
\end{prop}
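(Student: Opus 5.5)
The plan is to compute both sides directly in terms of the singular values of the restricted differentials, using the orthogonal splitting \eqref{eq: generalised-Hodge-dec}. We assume throughout that the determinant of $D_{ac}$ is taken with respect to orthonormal bases of $C^{even}_{ac}$ and $C^{odd}_{ac}$ induced by $\langle\cdot,\cdot\rangle_C$. With this choice the absolute value of the determinant is basis-independent, since a unitary change of basis has determinant of modulus $1$.

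\textbf{Step 1: block structure of $D_{ac}$.} Write $B^k=\operatorname{Im}(d_{k-1})$ and $B'^k=\operatorname{Im}(d_k^\dagger)$, so that $C^k_{ac}=B^k\oplus_\perp B'^k$. The restriction $\delta_k\coloneqq d_k|_{B'^k}\colon B'^k\to B^{k+1}$ is an isomorphism, since $\operatorname{Ker}(d_k)\cap B'^k=0$.

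Next we check that $\eta_k=d_k^\dagger G_{k+1}$ vanishes on $B'^{k+1}\oplus\operatorname{Ker}(\Delta_{k+1})$ and restricts to $\delta_k^{-1}$ on $B^{k+1}$. For the vanishing: $G_{k+1}$ preserves $B'^{k+1}$ and kills harmonic forms, and $d_k^\dagger d_{k+1}^\dagger=0$. For the inverse property: on $B^{k+1}$ we have $\Delta_{k+1}=d_k d_k^\dagger$, so $d_k\eta_k=\mathbb{I}$ there, and $\eta_k$ lands in $B'^k$. Hence $D_{ac}$ maps $B'^{2j}\to B^{2j+1}$ via $\delta_{2j}$ and $B^{2j}\to B'^{2j-1}$ via $\delta_{2j-1}^{-1}$. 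Up to reordering the orthonormal bases, which changes the determinant only by a sign, $D_{ac}|_{C^{even}_{ac}}$ is therefore block-diagonal, and
\[
|\operatorname{det}(D_{ac}|_{C^{even}_{ac}})|=\prod_k |\operatorname{det}\delta_k|^{(-1)^k}.
\]

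\textbf{Step 2: spectral side.} Set $a_k\coloneqq\operatorname{det}'(d_k^\dagger d_k)=\operatorname{det}(\delta_k^\dagger\delta_k)=|\operatorname{det}\delta_k|^2$, computed in orthonormal bases. On $(\operatorname{Ker}\Delta_k)^\perp=B^k\oplus B'^k$ the Laplacian splits as $d_{k-1}d_{k-1}^\dagger\oplus d_k^\dagger d_k$. The nonzero spectrum of $d_{k-1}d_{k-1}^\dagger$ coincides, with multiplicities, with that of $d_{k-1}^\dagger d_{k-1}$, since $AA^\dagger$ and $A^\dagger A$ share nonzero eigenvalues. Therefore $\operatorname{det}'(\Delta_k)=a_{k-1}a_k$.

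Substituting into Theorem \ref{thm:torsion-laplacian-formula} and collecting the exponent of each $a_k$ from the factors $\Delta_k$ and $\Delta_{k+1}$ gives
\[
\tfrac{(-1)^{k+1}k}{2}+\tfrac{(-1)^{k+2}(k+1)}{2}=\tfrac{(-1)^k}{2}.
\]
Hence $|\tau(C^\bullet)|=\prod_k a_k^{(-1)^k/2}=\prod_k|\operatorname{det}\delta_k|^{(-1)^k}$, which equals the expression from Step 1.

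\textbf{Main obstacle.} The delicate point is the bookkeeping of conventions rather than any hard estimate. First, one must confirm that the paper's exponent convention (the sign $(-1)^{k+1}$ in Theorem \ref{thm:torsion-laplacian-formula}) produces $\prod a_k^{(-1)^k/2}$ rather than its reciprocal; if not, the statement should be read with $D_{ac}$ viewed as a map $C^{odd}_{ac}\to C^{even}_{ac}$. Second, one must check that the identification $\eta_k|_{B^{k+1}}=\delta_k^{-1}$ holds exactly, which relies on $G$ commuting with $d$ and $d^\dagger$. I would verify both points first on a two-term complex $C^0\xrightarrow{d}C^1$ before writing the general telescoping.
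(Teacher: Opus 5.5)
Your proof is correct, and it takes a different route from the paper, which gives no argument of its own here and only points to \cite[Section 3.3]{Mnev14}. The usual algebraic argument works with the torsion map $\mathbb{T}$ itself. One chooses bases of $C^k$ adapted to $C^k=B^k\oplus\eta(B^{k+1})\oplus\mathscr{H}^k$, in which $d+\eta$ acts as a block permutation, and reads off the coefficient of $\mathbb{T}(\mu_C)$ as $\operatorname{det}(d+\eta)$ relative to the induced volume elements. The Laplacian product only enters afterwards, through Theorem \ref{thm:torsion-laplacian-formula}.

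You never use $\mathbb{T}$. Instead you reduce both the spectral product and $\operatorname{det}(D_{ac}|_{C^{even}_{ac}})$ to the common quantity $\prod_k|\operatorname{det}\delta_k|^{(-1)^k}$, using only the Hodge splitting and the fact that $AA^\dagger$ and $A^\dagger A$ share their nonzero spectrum. This is more elementary and stays entirely inside the Hodge framework of Remark \ref{rem:hodge-construction}. The price is that it is tied to the Hodge contraction $\eta=d^\dagger G$ and to orthonormal bases, and it only sees moduli. The algebraic route works for any contraction with $\eta^2=0$ and any volume elements, and it determines the coefficient itself up to sign. Your explicit convention that the determinant is computed in orthonormal bases is what gives the statement a meaning, so keep it.

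Both concerns in your closing paragraph are already settled by your own computation.
\begin{itemize}
\item Your exponent $\frac{(-1)^{k+1}k}{2}+\frac{(-1)^{k+2}(k+1)}{2}=\frac{(-1)^k}{2}$ is right. The two-term complex $C^0\xrightarrow{\lambda}C^1$ gives $\tau=|\lambda|=|\operatorname{det}D_{ac}|$, so $C^{even}_{ac}\to C^{odd}_{ac}$ is the correct direction.
\item Do not calibrate against the intermediate line $p_{k+1}+p_k=(-1)^{k+1}/2$ in the proof of Proposition \ref{prop:torsion-rearrangement-nonacyclic}. It contains a sign slip, since $(-1)^{k+2}=(-1)^k$. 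The displayed statement of that proposition does agree with your value after the shift $k\mapsto k-1$.
\item $G$ commutes with $d$ and $d^\dagger$ because $\Delta d=d\Delta$ and $\Delta d^\dagger=d^\dagger\Delta$, both immediate from $d^2=0$. Hence $G$ preserves each Hodge summand, which is all you use.
\item Alternatively, $\eta_k|_{B^{k+1}}=\delta_k^{-1}$ follows directly by restricting $d_k\eta_k+\eta_{k+1}d_{k+1}=\mathbb{I}-P^{k+1}$ to $B^{k+1}$.
\end{itemize}
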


\subsection{Thom--Smale Torsion}
Having recalled the general theory of torsion, we now proceed to construct a specific geometric complex associated with the Morse--Smale flow. Our goal is to apply the definition of torsion to this combinatorial structure, which we will call the Thom--Smale complex.

We begin by constructing the standard Morse complex with real coefficients, following \cite{Hutchings02}, which serves as the foundation for the bundle-valued case.

\begin{definition}
\label{def:morse_spaces}
 Let $V$ be a $C^\infty$-linearisable Morse--Smale vector field. The set of critical points of index $k$ is denoted by $\text{Crit}_k(V) \coloneqq \{p \in M \mid V_p = 0, \, \operatorname{dim} (W^s(p)) = k\}$.
The $k$-th \textit{Morse space} $C^k_M(V)$ is the real vector space freely generated by these points:
\begin{equation}
    C^k_M(V) \coloneqq \bigoplus_{p \in \text{Crit}_k(V)} \mathbb{R} \cdot p.
\end{equation}
\end{definition}

To define a differential and turn the collection of Morse spaces into a complex, we must note the following facts. First, for every critical point $x \in \text{Crit}(V)$, we fix an arbitrary orientation of its stable manifold $W^s(x)$.
Consider now $p \in \text{Crit}_k(V)$ and $q \in \text{Crit}_{k+1}(V)$. The moduli space of trajectories flowing from $p$ to $q$ is defined as $\mathcal{M}(p,q) \coloneqq (W^u(p) \cap W^s(q))/\mathbb{R}$, which consists of a finite set of orbits due to the transversality condition.
For any trajectory $\gamma \in \mathcal{M}(p,q)$, the stable manifold of $p$ (dimension $k$) approaches the stable manifold of $q$ (dimension $k+1$). Therefore, the following isomorphism of oriented vector spaces holds:
\begin{equation}
    T W^s(p) \cong \mathbb{R}\dot{\gamma} \oplus T W^s(q),
\end{equation}
where $\mathbb{R}\dot{\gamma}$ carries the natural orientation given by the time direction. We define the sign of the trajectory, $\text{sgn}(\gamma) \in \{\pm 1\}$, to be $+1$ if the chosen orientation of $W^s(p)$ coincides with the product orientation of the flow direction and $W^s(q)$, and $-1$ otherwise.

We then define the counting number $n(p,q) \coloneqq \sum_{\gamma \in \mathcal{M}(p,q)} \text{sgn}(\gamma)$. The Morse differential $d_M^k: C^k_M(V) \to C^{k+1}_M(V)$ is the linear map given by:
\begin{equation}
\label{eq:morse_differential}
    d_M^k(p) \coloneqq \sum_{q \in \text{Crit}_{k+1}(V)} n(p,q) \cdot q.
\end{equation}

The fundamental property of this construction is stated below (see \cite[Lemma 2.2]{Hutchings02} for the proof).

\begin{prop}
\label{Morse-Complex}
The Morse differential satisfies the nilpotency condition $d_M^{k+1} \circ d_M^k = 0$. Consequently, $(C^\bullet_M(V), d_M)$ forms a cochain complex, called the Morse complex.
\end{prop}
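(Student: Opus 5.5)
The plan is the standard compactness-and-gluing argument, in cochain form. Fix $p \in \operatorname{Crit}_k(V)$ and $r \in \operatorname{Crit}_{k+2}(V)$. By definition the coefficient of $r$ in $d_M^{k+1} d_M^k(p)$ is
\[
\sum_{q \in \operatorname{Crit}_{k+1}(V)} n(p,q)\, n(q,r) = \sum_{q} \sum_{(\gamma_1,\gamma_2) \in \mathcal{M}(p,q)\times\mathcal{M}(q,r)} \operatorname{sgn}(\gamma_1)\operatorname{sgn}(\gamma_2).
\]
So it suffices to show that the set of \emph{broken trajectories} $\bigsqcup_q \mathcal{M}(p,q)\times\mathcal{M}(q,r)$ is the boundary of a compact oriented one-dimensional manifold. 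The boundary orientation should reproduce the product sign $\operatorname{sgn}(\gamma_1)\operatorname{sgn}(\gamma_2)$. The signed count of boundary points of a compact oriented $1$-manifold is zero, which gives the claim.

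\textbf{Step 1: the moduli space is a $1$-manifold.} By the Morse--Smale transversality in Definition \ref{M-S-flow} and the dimension identity \eqref{eq: dimension-intersection}, $W^u(p)\cap W^s(r)$ is a submanifold of dimension $\dim W^u(p)+\dim W^s(r)-n = (n-k)+(k+2)-n = 2$. The flow acts on it freely. The quotient $\mathcal{M}(p,r)$ is therefore a smooth $1$-manifold, and it can be identified with the intersection of $W^u(p)\cap W^s(r)$ with a level hypersurface transverse to $V$ separating $p$ from $r$. The orientations fixed on the stable manifolds induce an orientation on it, as in the definition of $\operatorname{sgn}$.

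\textbf{Step 2: compactness.} A sequence in $\mathcal{M}(p,r)$ either has a convergent subsequence or converges to a broken trajectory $p \to q_1 \to \dots \to q_m \to r$. The intermediate elements are critical elements of the flow by Proposition \ref{prop: partition}. Proposition \ref{prop: growing-flow} forces the index to strictly increase along each segment, so for fixed points the only possibility is exactly one intermediate point $q$ of index $k+1$.

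A subtlety arises here: for a general Morse--Smale flow the limit could a priori pass through a closed orbit. I would handle this by noting that the closed orbits sit at intermediate levels with their own transversality. A dimension count then shows that breaking through a closed orbit in the middle of a codimension-$2$ family cannot occur generically, because the moduli spaces into and out of a closed orbit have the wrong dimension. If this turns out to be delicate, I would restrict to the gradient-like case, as in \cite{Hutchings02}.

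\textbf{Step 3: gluing (main obstacle).} One must show that every broken pair $(\gamma_1,\gamma_2)$ is the limit of exactly one end of $\mathcal{M}(p,r)$, with the correct orientation sign. Here I would use the $C^\infty$-linearisability hypothesis of Definition \ref{MS-cl_linearisable}. Near $q$ the flow is linear, so $W^u(p)$ enters a small box around $q$ along $\gamma_1$. By an inclination ($\lambda$-lemma) argument made explicit in the linear coordinates, near $q$ it accumulates $C^1$ onto $W^u(q)$. Its transverse intersection with $W^s(r)$ near $\gamma_2$ is then a half-open arc parametrised by the exit time from the box, which yields a collar $[0,\epsilon)\to \overline{\mathcal{M}(p,r)}$.

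Comparing the orientations $TW^s(p)\cong \mathbb{R}\dot\gamma_1\oplus\mathbb{R}\dot\gamma_2\oplus TW^s(r)$ along the glued family against the outward normal shows that the end's sign is $\operatorname{sgn}(\gamma_1)\operatorname{sgn}(\gamma_2)$, up to a global convention-dependent sign. This sign bookkeeping, together with the uniqueness of the glued end, is where I expect the real work to lie. Once established, summing the boundary signs over all ends gives $d_M^{k+1}\circ d_M^k = 0$.
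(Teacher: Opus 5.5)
Your outline is the standard compactness-and-gluing argument, and that is what the paper relies on: its proof is only a citation of \cite[Lemma 2.2]{Hutchings02}, which concerns gradient flows of Morse functions, and these have no closed orbits. The gap is in the one place where you go beyond that setting, your treatment of closed orbits in Step~2. The dimension count you propose there is false. For a closed orbit $\Lambda$ with $\operatorname{rank} T^u_\Lambda = u$ one has $\dim W^u(\Lambda) = u+1$ and $\dim W^s(\Lambda) = n-u$. Using $\dim \mathcal{M}(a,b) = \dim W^u(a) + \dim W^s(b) - n - 1$, this gives
\[
\dim \mathcal{M}(p,\Lambda) + \dim \mathcal{M}(\Lambda,r) = \dim \mathcal{M}(p,r),
\]
whereas an intermediate fixed point $q$ gives $\dim \mathcal{M}(p,r) - 1$.

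So breaking at a closed orbit has codimension zero, not two. No matching condition is imposed at $\Lambda$: nearby trajectories wind around $\Lambda$ an unbounded number of times, so the phase at which they leave $\Lambda$ is not determined by the phase at which they arrive. Transverse intersections persist under perturbation, so no genericity argument removes this. Concretely, suppose $u = n-k-1$ and both $\mathcal{M}(p,\Lambda)$ and $\mathcal{M}(\Lambda,q)$ are nonempty for some $p \in \operatorname{Crit}_k(V)$, $q \in \operatorname{Crit}_{k+1}(V)$. Then the $\lambda$-lemma in the linearising chart around $\Lambda$ yields a distinct trajectory from $p$ to $q$ for every sufficiently large winding number. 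Hence $\mathcal{M}(p,q)$ is infinite and $n(p,q)$ is not even defined.

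In index difference two, an end of $\mathcal{M}(p,r)$ can similarly spiral onto a whole one-parameter family of broken trajectories through $\Lambda$. Thus $\overline{\mathcal{M}(p,r)}$ need not be a compact $1$-manifold with boundary. Moreover, such ends have no counterpart in $\sum_q n(p,q)\,n(q,r)$, which only records breakings at fixed points.

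The restriction you mention as a fallback is therefore not optional. Both the definition of $d_M$ and its nilpotency need an extra hypothesis, for instance that $V$ has no closed orbits of saddle type, i.e.\ none with $1 \le \operatorname{rank} T^u_\Lambda \le n-2$ (or no closed orbits at all, as in the cited lemma). Under such a hypothesis your Steps 1--3 are the right proof and agree with the one the paper cites.

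One further correction to Step~3: $W^s(p)$ has dimension $k$, so $TW^s(p)$ cannot split as $\mathbb{R}\dot\gamma_1 \oplus \mathbb{R}\dot\gamma_2 \oplus TW^s(r)$. This reproduces a dimensional slip in the paper's own sign convention. Along $\gamma \in \mathcal{M}(p,q)$, transversality gives $T_xW^u(p)/\mathbb{R}\dot\gamma \cong T_xM/T_xW^s(q)$. The signs should therefore be defined and compared using orientations of the unstable manifolds, or equivalently co-orientations of the stable ones.
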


The Morse complex constructed above relies solely on the dynamics of the vector field. To incorporate the data of the flat vector bundle $\mathcal{E}$ into this framework, we introduce a "twisted" version of the Morse complex. Following \cite{Chaubet22, Dang21}, we construct the Thom--Smale complex, where critical points are decorated with vectors from the fibres above them.

\begin{definition}
\label{Thom--Smale-complex}
The $k$-th \textit{Thom--Smale space} is defined as the direct sum of the fibres of $\mathcal{E}$ over the critical points of index $k$:
\begin{equation}
C^k_{TS}(V) \coloneqq \bigoplus_{p \in \text{Crit}_k(V)} \mathcal{E}_p.
\end{equation}
The \textit{Thom--Smale differential} $d_{TS}^k: C^k_{TS}(V) \to C^{k+1}_{TS}(V)$ is defined by summing parallel transport along flow lines. For a vector $u_p \in \mathcal{E}_p$:
\begin{equation}
\label{eq:TS_differential}
    d_{TS}^k(u_p) \coloneqq \sum_{q \in \text{Crit}_{k+1}(V)} \sum_{\gamma \in \mathcal{M}(p,q)} \text{sgn}(\gamma) \mathbf{P}_\gamma(u_p),
\end{equation}
where $\mathbf{P}_\gamma: \mathcal{E}_p \to \mathcal{E}_q$ is the parallel transport along the trajectory $\gamma$ with respect to the flat connection $\nabla$.
\end{definition}

The flatness of $\nabla$ ensures that parallel transport is homotopy invariant, making the sum well-defined. As shown in \cite[Section 10.2]{Chaubet22}, the operator satisfies $d_{TS}^{k+1} \circ d_{TS}^k = 0$, defining a valid cochain complex $(C^\bullet_{TS}(V), d_{TS})$.

In a chosen basis, the action of the Thom--Smale differential can be made explicit. Let $\{u_{p, j}\}_{j=1}^N$ be a basis for the fibre $\mathcal{E}_p$ at each critical point. Then:
\begin{equation}
\label{eq: Thom--Smale-differential}
    d_{TS}^k(u_{p, j}) = \sum_{q \in \text{Crit}_{k+1}(V)} \sum_{i=1}^N \left( \sum_{\gamma \in \mathcal{M}(p,q)} \text{sgn}(\gamma) [\mathbf{P}_\gamma]_{ij} \right) u_{q, i},
\end{equation}
where $[\mathbf{P}_\gamma]_{ij}$ are the matrix elements of the parallel transport. Since $(C^\bullet_{TS}(V), d_{TS})$ is a finite-dimensional differential complex, its numerical torsion $\tau(C^\bullet_{TS})$ is given accordingly to Theorem \ref{thm:torsion-laplacian-formula}.

\section{Analytic Torsion and Ruelle Zeta Function}
\label{sec: analytic-torsion-zeta}

This section introduces the two central objects involved in the celebrated Fried conjecture \cite{Fried87}: the Ray--Singer analytic torsion, a global topological invariant associated with the Riemannian metric, and the Ruelle Zeta function, a dynamical object constructed from the closed orbits of the flow. Despite their distinct origins, the conjecture posits a profound link between them, asserting that the analytic torsion should coincide with the value at zero of the Ruelle Zeta function. While originally proven for non-singular Morse--Smale flows in \cite{Fried87}, this result has recently been established for general Morse--Smale flows \cite{Shen21} and remains an active area of research for Anosov flows \cite{Dang20}.

\subsection{Analytic Torsion}
\label{sec: analytic-torsion}

The construction of the analytic torsion relies on the spectral theory of the twisted Laplacian $\Delta_k$ introduced in Definition \ref{def: codiff-laplacian}.

As established in \cite[Section 8]{Mnev14} and \cite{Seeley67}, this operator on a compact manifold is an elliptic self-adjoint operator with a purely discrete, non-negative spectrum:
\begin{equation}
    \text{spec}(\Delta_k) = \{0 \le \lambda_0 \le \lambda_1 \le \dots \to \infty \}.
\end{equation}
This spectral discreteness allows us to define the spectral zeta function:
\begin{equation}
\zeta(s, \Delta_k) \coloneqq \sum_{\lambda_j \in \text{spec}(\Delta_k) \setminus \{0\}} \lambda_j^{-s}.
\end{equation}
The series converges for $\mathrm{Re}(s) > n/2$ and admits a meromorphic continuation to the entire complex plane which is regular at $s=0$ (see \cite{Seeley67}). This regularity is crucial for the following definition.

\begin{definition}
\label{def: reg-det-laplacian}
The \textit{zeta-regularised determinant} of $\Delta_k$ (restricted to the orthogonal complement of its kernel) is defined as:
\begin{equation}
\operatorname{det}'(\Delta_k) \coloneqq \exp\left(-\frac{d\zeta (s, \Delta_k)}{ds}\Big|_{s=0}\right).
\end{equation}
\end{definition}

Since $M$ is compact, the twisted cohomology groups $H^k(M, \mathcal{E})$ are finite-dimensional vector spaces (see, e.g., \cite[Section 2.3]{DangRiviere17Topology}), allowing us to consider the determinant line of the cohomology:
\begin{equation}
    \text{Det}(H^\bullet(M, \mathcal{E})) \coloneqq \bigotimes_{k=0}^{n} \left(\text{Det}(H^k(M, \mathcal{E}))\right)^{(-1)^k}.
\end{equation}
To define the torsion as an element of this line, we need a reference volume element. The $L^2$-inner product on forms introduced in Definition \ref{scalar-product-forms} induces a metric on the harmonic forms $\mathcal{H}^k(M, \mathcal{E})$, and thus on the cohomology via the Hodge isomorphism $H^k(M, \mathcal{E}) \cong \mathcal{H}^k(M, \mathcal{E})$ presented in Theorem \ref{thm: hamonic-cohomology-iso}. By choosing an orthonormal basis $\{h_{k,j}\}$ for each $H^k(M, \mathcal{E})$, we construct the volume element:
\begin{equation}
    \eta_H \coloneqq \bigotimes_{k=0}^n \left( h_{k,1} \wedge \dots \wedge h_{k, b_k} \right)^{(-1)^k} \in \text{Det}(H^\bullet(M, \mathcal{E})).
\end{equation}

The Ray--Singer torsion is now defined as the combination of the regularised determinants of the Laplacian with the volume element of the cohomological determinant line.

\begin{definition}
\label{def: Ray--Singer-torsion}
The \textit{Ray--Singer analytic torsion} $\tau(M, \mathcal{E}) \in \text{Det}(H^\bullet(M, \mathcal{E}))$ is defined as:
\begin{equation}
    \tau(M, \mathcal{E}) \coloneqq \left( \prod_{k=0}^{n} \left(\operatorname{det}'(\Delta_k)\right)^{(-1)^{k+1} k/2} \right) \cdot \eta_H.
\end{equation}
\end{definition}

The topological nature of this object is established by the celebrated Cheeger-Müller theorem \cite{Cheeger79, Muller78}. This fundamental result asserts that the element $\tau(M, \mathcal{E}) \in \text{Det}(H^\bullet(M, \mathcal{E}))$ is an intrinsic invariant, completely independent of the auxiliary Riemannian metric $g$ on the manifold and the Hermitian metric $h$ on the bundle. While the scalar spectral factor and the canonical volume element $\eta_H$ both individually depend on the chosen metrics, their variations perfectly compensate each other. Consequently, the analytic torsion depends solely on the smooth structure of $M$ and the flat vector bundle $\mathcal{E}$.

\begin{rem}
\label{rem: scalar-component-torsion}
In the special case where the complex is acyclic, \textit{i.e.}, $H^k(M, \mathcal{E}) = 0$ for all $k \in \{0, \dots, n\}$, the determinant line is canonically $\mathbb{C}$, and the torsion reduces to a positive real number:
\begin{equation}
    \tau(M, \mathcal{E}) = \prod_{k=0}^{n} (\operatorname{det}' \Delta_k)^{(-1)^{k+1} k/2}.
\end{equation}
In the general non-acyclic case, the presence of the complex volume element $\eta_H$ introduces a phase ambiguity, which is intrinsic to the theory of complex vector bundles. Since the orthonormal basis defining $\eta_H$ is unique only up to a unitary transformation, the numerical torsion is defined up to a phase factor $e^{i\theta}$. 
\end{rem}

For calculations, it is often convenient to rewrite the torsion product in terms of the operators acting on coexact forms.

\begin{prop}
\label{prop:torsion-rearrangement-nonacyclic}
The analytic torsion can be equivalently expressed as:
\begin{equation}
\label{eq:rearranged-torsion-product}
    \tau(M, \mathcal{E}) = \left[ \prod_{k=1}^{n} \left(\operatorname{det}'(d^{\nabla, \dagger}_{k-1} d^\nabla_{k-1})\right)^{(-1)^{k+1}/2} \right] \cdot \eta_H,
\end{equation}
where $\operatorname{det}'(d^{\nabla, \dagger} d^\nabla)$ denotes the regularised determinant of the Laplacian restricted to the image of the adjoint operator.
\end{prop}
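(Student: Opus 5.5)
The plan is to split each Laplacian spectrally according to the Hodge decomposition of Theorem \ref{thm:hodge-decomposition}. Away from the kernel, $\Omega^k(M,\mathcal{E})$ is the orthogonal sum of exact forms $d^\nabla\Omega^{k-1}$ and coexact forms $d^{\nabla,\dagger}\Omega^{k+1}$. Both subspaces are invariant under $\Delta_k$, because $d^\nabla$ and $d^{\nabla,\dagger}$ are nilpotent. On exact forms $\Delta_k$ acts as $d^\nabla_{k-1}d^{\nabla,\dagger}_{k-1}$, and on coexact forms as $d^{\nabla,\dagger}_k d^\nabla_k$. Hence the nonzero spectrum of $\Delta_k$, with multiplicities, is the union of the nonzero spectra of $A_k\coloneqq d^{\nabla,\dagger}_k d^\nabla_k$ (restricted to coexact forms) and $B_k\coloneqq d^\nabla_{k-1}d^{\nabla,\dagger}_{k-1}$ (restricted to exact forms). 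This gives
\[
\zeta(s,\Delta_k)=\zeta(s,A_k)+\zeta(s,B_k).
\]

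The key step is to identify the spectrum of $B_k$ with that of $A_{k-1}$. The map $d^\nabla_{k-1}$ sends coexact $(k-1)$-forms injectively onto exact $k$-forms. It intertwines the two operators: $d^\nabla A_{k-1}=d^\nabla d^{\nabla,\dagger}d^\nabla=B_k d^\nabla$. So if $A_{k-1}\omega=\lambda\omega$ with $\lambda\neq0$, then $d^\nabla\omega\neq0$ and $B_k(d^\nabla\omega)=\lambda\, d^\nabla\omega$. Conversely, if $B_k\beta=\lambda\beta$, then $d^{\nabla,\dagger}\beta$ is a $\lambda$-eigenform of $A_{k-1}$. These maps are inverse up to the factor $\lambda$, so eigenspaces correspond with equal dimensions, and $\zeta(s,B_k)=\zeta(s,A_{k-1})$ for $\mathrm{Re}(s)$ large. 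By meromorphic continuation the identity holds everywhere, and in particular for the derivative at $s=0$.

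The regularity at $s=0$ of the individual functions $\zeta(s,A_k)$ is the one point I expect to need care, since Seeley's result \cite{Seeley67} is stated for elliptic $\Delta_k$ and $A_k$ is not elliptic. I would resolve it by induction on $k$. Since $B_0=0$ and $A_n=0$ on the relevant spaces, we have $\zeta(s,\Delta_0)=\zeta(s,A_0)$, so $\zeta(s,A_0)$ continues regularly to $s=0$. Then $\zeta(s,A_k)=\zeta(s,\Delta_k)-\zeta(s,A_{k-1})$ gives the same for every $k$. Consequently $\log\operatorname{det}'\Delta_k=\log\operatorname{det}'A_k+\log\operatorname{det}'A_{k-1}$, with the convention $\operatorname{det}'A_{-1}=\operatorname{det}'A_n=1$.

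Finally, take logarithms of the scalar factor in Definition \ref{def: Ray--Singer-torsion} and telescope:
\[
\sum_{k=0}^n\frac{(-1)^{k+1}k}{2}\bigl(\log\operatorname{det}'A_k+\log\operatorname{det}'A_{k-1}\bigr)=\sum_{j=0}^{n-1}\log\operatorname{det}'A_j\cdot\frac{(-1)^{j+1}j+(-1)^{j+2}(j+1)}{2}.
\]
The coefficient equals $(-1)^{j}/2$. Reindexing with $k=j+1$ turns this into $\sum_{k=1}^n\frac{(-1)^{k+1}}{2}\log\operatorname{det}'(d^{\nabla,\dagger}_{k-1}d^\nabla_{k-1})$. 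The volume element $\eta_H$ is untouched throughout, which yields Equation \eqref{eq:rearranged-torsion-product}.
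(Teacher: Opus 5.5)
Your proof is correct and follows the same route as the paper. Both arguments split $\Delta_k$ via the Hodge decomposition into its exact and coexact pieces, use the intertwining isomorphism $d^\nabla_{k-1}$ to show that $d^\nabla_{k-1} d^{\nabla,\dagger}_{k-1}$ on exact $k$-forms is isospectral to $d^{\nabla,\dagger}_{k-1} d^\nabla_{k-1}$ on coexact $(k-1)$-forms, and then reindex the exponents. Your version is in fact more careful in two places. The inductive argument that the restricted zeta functions are regular at $s=0$ is something the paper passes over, and your coefficient $(-1)^j/2$ is right, whereas the paper's computation of $p_{k+1}+p_k$ slips a sign by writing $-(k+1)+k$.
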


\begin{proof}
The proof relies on the Hodge decomposition of differential forms
\begin{equation*}
\Omega^{k}(M, \mathcal{E}) = \mathcal{H}^k(M, \mathcal{E}) \oplus \operatorname{Im}(d^\nabla_{k-1}) \oplus \operatorname{Im}(d_k^{\nabla, \dagger}),
\end{equation*}
as seen in Theorem \ref{thm:hodge-decomposition},
and on the spectral relation between the operators $d^\nabla d^{\nabla, \dagger}$ and $d^{\nabla, \dagger}d^\nabla$ restricted to the orthogonal complement of the harmonic forms. The Laplacian $\Delta$ respects this splitting and, since $\mathcal{H}^k(M, \mathcal{E})$ constitutes its kernel, the zeta-regularised determinant is defined by the restriction to the orthogonal complement. Specifically, on the subspace $\operatorname{Im}(d^\nabla_{k-1})$, the Laplacian acts as $d^\nabla_{k-1}d^{\nabla, \dagger}_{k-1}$, while on $\operatorname{Im}(d_k^{\nabla, \dagger})$, it acts as $d_k^{\nabla, \dagger}d_k^\nabla$. Consequently, the determinant factorises as:
\begin{equation}
\label{eq:det-split-detailed}
    \operatorname{det}' (\Delta_k) = \operatorname{det}' \left(d^\nabla_{k-1}d^{\nabla, \dagger}_{k-1}\Big|_{\operatorname{Im}(d^\nabla_{k-1})}\right) \cdot \operatorname{det}' \left(d_k^{\nabla, \dagger}d_k^\nabla\Big|_{\operatorname{Im}(d_k^{\nabla, \dagger})}\right).
\end{equation}
To relate the two terms appearing in the product, we consider the restriction of the exterior derivative $d_k^\nabla: \operatorname{Im}(d_k^{\nabla, \dagger}) \to \operatorname{Im}(d_k^\nabla)$. This map is an isomorphism: injectivity follows from the fact that $\operatorname{Ker}(d_k^\nabla) \cap \operatorname{Im}(d_k^{\nabla, \dagger}) = \{0\}$ due to the orthogonality of the Hodge decomposition, while surjectivity holds by definition. This isomorphism intertwines the operators, specifically $d_k^\nabla \circ (d_k^{\nabla, \dagger} d_k^\nabla) = (d_k^\nabla d_k^{\nabla, \dagger}) \circ d_k^\nabla$. This implies that the operator $d_k^{\nabla, \dagger} d_k^\nabla$ acting on $k$-forms and the operator $d_k^\nabla d_k^{\nabla, \dagger}$ acting on $(k+1)$-forms are isospectral on their respective domains. Thus, they share the same regularised determinant:
\begin{equation}
\label{eq:spectral-identity}
    \operatorname{det}' \left(d_k^\nabla d_k^{\nabla, \dagger}\Big|_{\operatorname{Im}(d_k^\nabla)}\right) = \operatorname{det}' \left(d_k^{\nabla, \dagger}d_k^\nabla\Big|_{\operatorname{Im}(d_k^{\nabla, \dagger})}\right).
\end{equation}
We can now substitute the factorisation presented in Equation \eqref{eq:det-split-detailed} into the definition of the torsion. By calling $D_k \coloneqq \operatorname{det}'(d_k^{\nabla, \dagger} d_k^\nabla)$ and using the identity written in Equation \eqref{eq:spectral-identity}, the term associated with $\operatorname{Im}(d^\nabla_{k-1})$ in the decomposition of $\Delta_k$ becomes $D_{k-1}$. The torsion product reads:
\begin{align*}
    \prod_{k=0}^{n} (\operatorname{det}' \Delta_k)^{p_k} &= \prod_{k=0}^{n} \left( D_{k-1} \cdot D_k \right)^{p_k} \\
    &= \left( \prod_{k=1}^{n} (D_{k-1})^{p_k} \right) \cdot \left( \prod_{k=0}^{n-1} (D_k)^{p_k} \right),
\end{align*}
where $p_k = (-1)^{k+1}k/2$. We re-index the first product by setting $j = k-1$. By gathering the terms over the common index range, the overall product reduces to
\begin{equation}
    \prod_{k=0}^{n} (\operatorname{det}' \Delta_k)^{p_k} = \prod_{k=0}^{n-1} (D_k)^{p_{k+1} + p_k},
\end{equation}
where the new exponent neatly simplifies as
\begin{align*}
    p_{k+1} + p_k &= \frac{(-1)^{k+2}(k+1)}{2} + \frac{(-1)^{k+1}k}{2} \\
    &= \frac{(-1)^{k}}{2} \left[ -(k+1) + k \right] = \frac{(-1)^{k+1}}{2}.
\end{align*}
Restoring the original definition of $D_k$ and shifting the index by one then immediately yields the stated formula:
\begin{equation}
    \prod_{k=0}^{n} (\operatorname{det}' \Delta_k)^{p_k} = \prod_{k=1}^{n} \left(\operatorname{det}'(d^{\nabla, \dagger}_{k} d^\nabla_{k})\right)^{(-1)^{k+1}/2}.
\end{equation}
\end{proof}

\begin{rem}
\label{rem: analytic-torsion-1}
This reformulation highlights that the scalar part of the torsion is determined by the spectrum on the acyclic part of the complex, while the cohomological contribution is isolated in $\eta_H$. A notable consequence is that if the manifold $M$ has even dimension $n$, the product is trivial. Indeed, by Poincaré duality, the contributions cancel out, and the numerical part of the torsion is identically $1$.
\end{rem}

\subsection{Ruelle Zeta Function}
\label{sec: Ruelle-zeta}

The Ruelle Zeta function is a dynamical invariant constructed as a formal product over the closed orbits of the flow, weighting each trajectory by its period and the holonomy of the given flat bundle. In the Morse--Smale setting, this construction is particularly straightforward: since the non-wandering set contains only finitely many closed orbits, the zeta function reduces to a finite product of determinants. To formally define these contributions, we must first interpret the unitary representation $\rho$ and the orientation of the unstable manifolds in terms of geometric transport along these orbits.

\begin{rem}
\label{rem:trivial-connection-recall}
Recall that the flat vector bundle $\mathcal{E}$ is constructed as a quotient of the trivial bundle $\widetilde{M} \times E$ over the universal cover of $M$. This trivial bundle carries a canonical flat connection $\mathbf{d}$ which acts on a section $\widetilde{s} \equiv f: \widetilde{M} \to E$ simply as the standard exterior derivative, $\mathbf{d}\widetilde{s} \coloneqq df$. Consequently, parallel sections are exactly the constant functions, meaning that parallel transport along any path in $\widetilde{M}$ acts as the identity on the vector component $E$.
\end{rem}

\begin{prop}
\label{flat-canonical-connection}
The canonical flat connection on $\widetilde{M} \times E$ descends to a flat connection $\nabla$ on $\mathcal{E} \to M$. A section $s \in \Gamma(M, \mathcal{E})$ is parallel along a path $\gamma$ if and only if its lift $\widetilde{s}$ to the universal cover is parallel along any lift $\widetilde{\gamma}$ with respect to the trivial connection $\mathbf{d}$.
\end{prop}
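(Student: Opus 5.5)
The plan is to work entirely through the correspondence of Remark \ref{sections}: sections $s\in\Gamma(M,\mathcal{E})$ are identified with equivariant maps $\widetilde{s}:\widetilde{M}\to E$ satisfying $\widetilde{s}(x\cdot[\gamma])=\rho([\gamma])^{-1}\widetilde{s}(x)$. We define $\nabla$ on $\mathcal{E}$ by declaring that $\widetilde{\nabla_X s}\coloneqq d\widetilde{s}(\widetilde{X})$, where $\widetilde{X}$ is the lift of $X\in\mathfrak{X}(M)$ to $\widetilde{M}$ via the local diffeomorphism $\pi$.

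\textbf{Step 1: $\nabla_X s$ is a well-defined section.} Deck transformations $R_{[\gamma]}:x\mapsto x\cdot[\gamma]$ preserve $\widetilde{X}$, since $\pi\circ R_{[\gamma]}=\pi$. Differentiating the equivariance identity along $\widetilde{X}$ gives
\[
d\widetilde{s}_{x\cdot[\gamma]}(\widetilde{X})=\rho([\gamma])^{-1}\,d\widetilde{s}_x(\widetilde{X}),
\]
because $\rho([\gamma])^{-1}$ is a constant linear map. Hence $d\widetilde{s}(\widetilde{X})$ is again equivariant and, by Remark \ref{sections}, defines a section of $\mathcal{E}$.

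\textbf{Step 2: $\nabla$ is a connection.} $C^\infty(M)$-linearity in $X$ is immediate. The Leibniz rule in $s$ follows from $\widetilde{fs}=(f\circ\pi)\,\widetilde{s}$ together with $d(f\circ\pi)(\widetilde{X})=(Xf)\circ\pi$.

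\textbf{Step 3: flatness.} Curvature is a local tensor, so it suffices to compute in the trivialisations $\Phi_\alpha$ of Proposition \ref{vector-bundle}. Over a simply connected $U_\alpha$ with section $\widetilde{s_\alpha}$, a section corresponds to the function $f_\alpha(q)=\widetilde{s}(\widetilde{s_\alpha}(q))$, up to the constant matrix $\rho([\gamma]_q)$, which is locally constant. In this frame $\nabla_X s$ corresponds to $df_\alpha(X)$, so $\nabla$ is locally the trivial connection $d$ with zero connection form. Therefore $F_\nabla=0$, consistent with the transition functions $g_{\alpha\beta}$ being constant and thus compatible with $d$ on overlaps.

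\textbf{Step 4: characterisation of parallel sections.} Let $\widetilde{\gamma}$ be any lift of $\gamma$; it exists and is determined by its starting point by path lifting. Then $s$ is parallel along $\gamma$, i.e.\ $\nabla_{\dot\gamma}s=0$, if and only if
\[
\frac{d}{dt}\,\widetilde{s}(\widetilde{\gamma}(t))=d\widetilde{s}\bigl(\dot{\widetilde{\gamma}}\bigr)=0,
\]
since $\dot{\widetilde{\gamma}}$ is the lift of $\dot\gamma$. This is exactly the condition that $\widetilde{s}$ is parallel along $\widetilde{\gamma}$ with respect to $\mathbf{d}$ (Remark \ref{rem:trivial-connection-recall}). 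Independence of the choice of lift follows from equivariance: any two lifts differ by a deck transformation, under which $\widetilde{s}$ changes by the constant invertible map $\rho([\gamma])^{-1}$, and this preserves the vanishing of the derivative.

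\textbf{Main obstacle.} The only delicate point is the well-definedness in Step 1, together with the fact that parallelism is really defined along sections over $\gamma$ rather than global sections. The latter is handled by working with equivariant maps along the pulled-back path, using the same local computation.
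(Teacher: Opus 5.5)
Your argument is correct, and there is nothing in the paper to compare it with: the paper states this proposition without proof, right after Remark \ref{rem:trivial-connection-recall}, and uses it directly in the proof of Theorem \ref{th:representation-parallel}. What you have written is the standard descent argument that the paper leaves implicit. Step 1 is the essential point, namely that $\mathbf{d}$ is invariant under the deck action twisted by $\rho$. Your checks of tensoriality in $X$, of the Leibniz rule, of the vanishing connection form in the frames $\Phi_\alpha$, and of independence of the chosen lift are all sound.

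Two small remarks.

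\emph{Flatness without trivialisations.} You can check flatness directly upstairs. Since $\widetilde{X}$ is $\pi$-related to $X$ and $\pi$ is a local diffeomorphism, $\widetilde{[X,Y]}=[\widetilde{X},\widetilde{Y}]$. Hence the lift of $F_\nabla(X,Y)s$ is $\widetilde{X}\widetilde{Y}\widetilde{s}-\widetilde{Y}\widetilde{X}\widetilde{s}-[\widetilde{X},\widetilde{Y}]\widetilde{s}=0$.

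\emph{Sections along $\gamma$.} The version for sections along $\gamma$, which you only sketch in your closing paragraph, is the one the paper actually needs in Theorem \ref{th:representation-parallel}. There an arbitrary $u\in\mathcal{E}_p$ is transported around a loop. A global section that is parallel along a loop based at $p$ must have $s(p)$ fixed by the holonomy, so the global-section statement alone cannot recover $\rho([\gamma])$. Your sketch works and is worth writing out:
\begin{itemize}
\item Write $\sigma(t)=[\widetilde{\sigma}(t),\widetilde{\gamma}(t)]$.
\item On a subinterval where $\gamma$ stays in some $U_\alpha$, the lifts $\widetilde{\gamma}(t)$ and $\widetilde{s_\alpha}(\gamma(t))$ differ by a deck transformation $g$. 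It is constant because two lifts of the same path that agree at one time agree everywhere.
\item Therefore $\Phi_\alpha(\sigma(t))=(\gamma(t),\rho(g)\widetilde{\sigma}(t))$.
\item By your Step 3 the connection is $d$ in this frame, so the covariant derivative of $\sigma$ corresponds to $\rho(g)\,\frac{d}{dt}\widetilde{\sigma}(t)$. It vanishes if and only if $\widetilde{\sigma}$ is constant.
\end{itemize}
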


This construction allows us to identify the holonomy of the connection with the representation of the fundamental group.

\begin{definition}
\label{parallel-transport-loop}
    Let $\gamma: [0,1] \to M$ be a loop based at $p$. The \textit{holonomy map} $\mathbf{P}_\gamma: \mathcal{E}_p \to \mathcal{E}_p$ is the linear isomorphism induced by parallel transport along $\gamma$ via the connection $\nabla$.
\end{definition}

The following theorem formalises the link between this geometric transport and the algebraic representation.

\begin{theorem}
\label{th:representation-parallel}
   Let $\gamma$ be a loop based at $p$ representing the class $[\gamma] \in \pi_1(M, p)$. The action of the holonomy map $\mathbf{P}_\gamma$ on the fibre $\mathcal{E}_p$ coincides with the action of the representation $\rho([\gamma]) \in \mathrm{GL}(E)$.
\end{theorem}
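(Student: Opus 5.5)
The plan is to compute the holonomy on the universal cover, where parallel transport is trivial by Remark \ref{rem:trivial-connection-recall}, and then read off the result in the fibre $\mathcal{E}_p$ through the identification of Definition \ref{total-space}.

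Fix the loop $\gamma$ based at $p$ and a lift $\widetilde{p} \in \pi^{-1}(p)$. By the path-lifting property of the covering map, there is a unique lift $\widetilde{\gamma}:[0,1]\to\widetilde{M}$ with $\widetilde{\gamma}(0)=\widetilde{p}$. Its endpoint lies in the same fibre and satisfies $\widetilde{\gamma}(1)=\widetilde{p}\cdot[\gamma]$, where the deck action is taken with the convention of Definition \ref{total-space}. Every element of $\mathcal{E}_p$ can be written as $[v,\widetilde{p}]$ for a unique $v\in E$, since fixing the representative point $\widetilde{p}$ kills the equivalence. This gives a linear identification $\iota_{\widetilde{p}}:E\to\mathcal{E}_p$. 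The theorem is to be read as the identity $\mathbf{P}_\gamma\circ\iota_{\widetilde{p}}=\iota_{\widetilde{p}}\circ\rho([\gamma])$, possibly with $\rho([\gamma])^{-1}$ depending on the orientation convention for deck transformations. The first thing I will do is fix this convention.

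Next, take $u=[v,\widetilde{p}]\in\mathcal{E}_p$. By Proposition \ref{flat-canonical-connection}, the parallel transport of $u$ along $\gamma$ is obtained by lifting. Take the section along $\widetilde{\gamma}$ that is parallel for $\mathbf{d}$; by Remark \ref{rem:trivial-connection-recall} this is the constant $v$. Project it back, so that $u(t)=[v,\widetilde{\gamma}(t)]$. This is well defined: the map $\widetilde{M}\times E\to\mathcal{E}$ is a local diffeomorphism that intertwines the connections. That in turn follows because the transition functions in the proof of Proposition \ref{vector-bundle} are locally constant, so $\mathbf{d}$ descends to $\nabla$. At $t=1$ we get
\[
\mathbf{P}_\gamma(u)=[v,\widetilde{p}\cdot[\gamma]]=[\rho([\gamma])v,\widetilde{p}],
\]
using the equivalence relation $(v,x\cdot[\gamma])\sim(\rho([\gamma])v,x)$. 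In the identification $\iota_{\widetilde{p}}$ this says exactly that $\mathbf{P}_\gamma$ acts as $\rho([\gamma])$.

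To finish, I check two independence properties. First, homotopy invariance: the endpoint of the lift depends only on the homotopy class $[\gamma]$, which is the monodromy theorem for coverings, consistent with flatness. Second, changing the lift $\widetilde{p}$ to $\widetilde{p}\cdot[\beta]$ conjugates the identification by $\rho([\beta])$. This is the expected change of basepoint isomorphism $\pi_1(M,p)\cong\pi_1(M)$. So the statement holds canonically up to this inner automorphism, which is harmless since the paper suppresses the basepoint.

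The main obstacle I expect is bookkeeping of conventions rather than analysis. The deck group may act on the right or the left, and the holonomy may be taken from the start to the end of the loop or the reverse. These choices determine whether one obtains $\rho([\gamma])$ or $\rho([\gamma])^{-1}$, equivalently $\rho([\gamma^{-1}])$. I will resolve this by matching against the right action $x\cdot[\gamma]$ used in Definition \ref{total-space} and Remark \ref{sections}. For the equivariant function $\phi$ of a parallel section along $\widetilde{\gamma}$, the relation $\phi(x\cdot[\gamma])=\rho([\gamma])^{-1}\phi(x)$ fixes the sign unambiguously.
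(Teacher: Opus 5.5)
Your proposal is correct and is essentially the paper's proof. You lift $\gamma$ to $\widetilde{\gamma}$ from $\widetilde{p}$, note that the $\mathbf{d}$-parallel lift of $[v,\widetilde{p}]$ is the constant $v$, and apply $(v,\widetilde{p}\cdot[\gamma])\sim(\rho([\gamma])v,\widetilde{p})$ at the endpoint; your checks on homotopy invariance and on replacing $\widetilde{p}$ by $\widetilde{p}\cdot[\beta]$ are sound extras.

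One correction to your last paragraph. Remark~\ref{sections} only restates the equivalence relation, so it cannot decide between $\rho([\gamma])$ and $\rho([\gamma])^{-1}$. The sign is fixed by the identification $\widetilde{\gamma}(1)=\widetilde{p}\cdot[\gamma]$, which you and the paper both simply adopt as a convention. That identification is compatible with a genuine right action only if $[\gamma][\delta]$ means traversing $\delta$ first. With the usual concatenation, $\mathbf{P}_{\gamma\ast\delta}=\mathbf{P}_{\delta}\circ\mathbf{P}_{\gamma}$ is an anti-homomorphism, and the answer becomes $\rho([\gamma])^{-1}$.
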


\begin{proof}
   Since the connection is flat, the parallel transport depends only on the homotopy class of the loop. Let $\widetilde{\gamma}: [0,1] \to \widetilde{M}$ be the unique lift\footnote{Given a path $\gamma: [0,1] \to M$ and a starting point $\widetilde{p} \in \pi^{-1}(\gamma(0))$, a \textit{lift} is the unique continuous path $\widetilde{\gamma}: [0,1] \to \widetilde{M}$ satisfying $\widetilde{\gamma}(0) = \widetilde{p}$ and $\pi \circ \widetilde{\gamma} = \gamma$. The existence and uniqueness are guaranteed by the lifting properties of covering spaces, see \cite[Lemma 54.1]{Munkres00}.} of $\gamma$ starting at $\widetilde{p} \in \pi^{-1}(p)$. Consequently, the endpoint satisfies $\widetilde{\gamma}(1) = \widetilde{p} \cdot [\gamma]$.
   
   Knowing that an element $u \in \mathcal{E}_p$ is represented by the equivalence class $[(\widetilde{p}, v)]$, Proposition \ref{flat-canonical-connection} states that the parallel transport of $u$ along $\gamma$ corresponds to parallel transport of the lift $(\widetilde{p}, v)$ along $\widetilde{\gamma}$ using the trivial connection $\mathbf{d}$. Since $\mathbf{d}$ keeps the vector component constant (Remark \ref{rem:trivial-connection-recall}), the transported element at the endpoint is $(\widetilde{\gamma}(1), v) = (\widetilde{p} \cdot [\gamma], v)$, which, according to the definition of the quotient space $\mathcal{E}$ (Definition \ref{total-space}), is equivalent to:
\begin{equation}
   (\widetilde{p} \cdot [\gamma], v) \sim (\widetilde{p}, \rho([\gamma]) v).
\end{equation}
   Thus, the result of the transport corresponds to the class $[(\widetilde{p}, \rho([\gamma]) v)]$, which is precisely the action of $\rho([\gamma])$ on the original vector $v$ in the fibre.
\end{proof}

To define the Ruelle Zeta function, we require one final geometric ingredient related to the orientation of the unstable manifolds along closed orbits.

\begin{definition}
\label{twist-factor}
Let $V$ be a $C^\infty$-linearisable Morse--Smale vector field. For a closed orbit $\Lambda$, the \textit{twist factor} $\Delta(\Lambda) \in \{+1, -1\}$ is defined as $+1$ if the unstable manifold $W^u(\Lambda)$ is orientable, and $-1$ otherwise.
\end{definition}

We now have all the elements to define the function directly in terms of the spectral data of the closed orbits, following the conventions of \cite{Shen21}.

\begin{definition}
\label{Ruelle-zeta}
The {Ruelle Zeta function} of a $C^\infty$-linearisable Morse--Smale vector field is defined as the product over the closed orbits $\Lambda$:
\begin{equation}
\label{eq:Ruelle-zeta}
    \mathfrak{R}_{V, \rho}(z) \coloneqq \prod_{\Lambda \, \text{closed}} \operatorname{det}\left(\mathbb{I} - \Delta(\Lambda) \rho(\Lambda)e^{-T_{\Lambda}z}\right)^{(-1)^{\mathrm{ind}(\Lambda)}},
\end{equation}
where $T_{\Lambda}$ is the minimal period, $\Delta(\Lambda)$ is the twist factor defined in Definition \ref{twist-factor}, and $\mathrm{ind}(\Lambda)$ is the rank of the unstable bundle $T^u_\Lambda$.
\end{definition}

The Ruelle zeta function enjoys the following analytical property (c.f. \cite[Section 4]{Shen21Survey}).

\begin{prop}
\label{prop:ruelle-zeta-well-defined-ms}
There exists a real parameter $\sigma_0 > 0$ such that the product defining $\mathfrak{R}_{V,\rho}(z)$ converges to a non-zero analytic function for $\mathrm{Re}(z) > \sigma_0$. Furthermore, this function admits a meromorphic continuation to the entire complex plane that is analytic at $z=0$.
\end{prop}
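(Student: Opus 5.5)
The key observation is that in the Morse--Smale setting the product in Definition \ref{Ruelle-zeta} is \emph{finite}. By Definition \ref{M-S-flow}, the non-wandering set consists of finitely many hyperbolic critical elements $\{\Lambda_i\}_{i=1}^K$. Only the closed orbits among them contribute factors, and each contributes exactly one factor because $T_\Lambda$ is the \emph{minimal} period. So the plan is to prove directly that each factor is an entire function of $z$ with controlled zeros. The three claims of the proposition then follow at once: convergence for $\mathrm{Re}(z)>\sigma_0$, non-vanishing there, and meromorphic continuation regular at $z=0$.

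First I analyse a single factor. For a closed orbit $\Lambda$, the matrix $\rho(\Lambda)\in\mathrm{GL}(E)$ is unitary, so its eigenvalues $e^{i\vartheta_1},\dots,e^{i\vartheta_N}$ have modulus one. Since $\Delta(\Lambda)=\pm1$, the function
\[
f_\Lambda(z)\coloneqq\operatorname{det}\left(\mathbb{I}-\Delta(\Lambda)\rho(\Lambda)e^{-T_\Lambda z}\right)=\prod_{j=1}^N\left(1-\Delta(\Lambda)e^{i\vartheta_j}e^{-T_\Lambda z}\right)
\]
is a polynomial in $e^{-T_\Lambda z}$, hence entire in $z$. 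Its zeros lie exactly where $|e^{-T_\Lambda z}|=1$, that is, on the imaginary axis $\mathrm{Re}(z)=0$. Consequently $f_\Lambda$ has no zeros for $\mathrm{Re}(z)>0$. For $\mathrm{Re}(z)\geq\sigma>0$ one also has $|f_\Lambda(z)|\geq(1-e^{-T_\Lambda\sigma})^N>0$, which gives a uniform lower bound.

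Next I assemble the factors. Each exponent $(-1)^{\mathrm{ind}(\Lambda)}$ is $\pm1$, so $\mathfrak{R}_{V,\rho}$ is a finite product of entire functions and reciprocals of entire functions that are not identically zero. It is therefore meromorphic on $\mathbb{C}$. On $\mathrm{Re}(z)>0$ it is analytic and non-zero, so any $\sigma_0>0$ works; in particular, "convergence" is trivial because there is no limit to take. I would also remark that the formulation with $\sigma_0$ mirrors the Anosov case, where the product is infinite and Euler-product convergence requires a genuine growth estimate on the number of orbits of period at most $T$.

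The main obstacle is the behaviour at $z=0$, which lies on the critical line where zeros can occur. Indeed $f_\Lambda(0)=\operatorname{det}(\mathbb{I}-\Delta(\Lambda)\rho(\Lambda))$ vanishes precisely when $\Delta(\Lambda)$ is an eigenvalue of $\rho(\Lambda)$. So "analytic at $z=0$" cannot come from each factor separately in general. My first step is to read the statement in the sense of the cited survey: $\mathfrak{R}_{V,\rho}$ is meromorphic near $0$, and one extracts its leading Laurent coefficient, or under the generic assumption $\operatorname{det}(\mathbb{I}-\Delta(\Lambda)\rho(\Lambda))\neq0$ for all $\Lambda$ it is genuinely analytic and non-zero at $0$. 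I would state that assumption explicitly and prove that case: every factor is then analytic and non-vanishing near $0$, and $\mathfrak{R}_{V,\rho}(0)=\prod_\Lambda\operatorname{det}(\mathbb{I}-\Delta(\Lambda)\rho(\Lambda))^{(-1)^{\mathrm{ind}(\Lambda)}}$. In the degenerate case I would compute the order of vanishing of each $f_\Lambda$ at $0$. This order is $m_\Lambda=\dim\operatorname{Ker}(\mathbb{I}-\Delta(\Lambda)\rho(\Lambda))$, because $1-ce^{-T_\Lambda z}$ with $c=1$ has a simple zero at $0$. The total order at $0$ is then $\sum_\Lambda(-1)^{\mathrm{ind}(\Lambda)}m_\Lambda$. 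Showing that this order is non-negative, or zero, would require the topological input relating these multiplicities to the twisted cohomology of the neighbourhoods of the orbits, which is where I expect the real difficulty to lie.
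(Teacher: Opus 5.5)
Your argument for the first two claims is correct. It is the self-contained version of what the paper only cites from Shen's survey, and the paper's surrounding text relies on the same points. The product in Definition~\ref{Ruelle-zeta} is finite. Unitarity of $\rho$ confines the zeros of each $f_\Lambda$ to $\mathrm{Re}(z)=0$, so $\mathfrak{R}_{V,\rho}$ is meromorphic on $\mathbb{C}$, and holomorphic and non-vanishing off the imaginary axis. Regularity at $z=0$ then comes from the non-aligned hypothesis, exactly as in the paragraph following the proposition. Your order count $m_\Lambda=\dim\operatorname{Ker}(\mathbb{I}-\Delta(\Lambda)\rho(\Lambda))$ is also right, using that the unitary $\rho(\Lambda)$ is diagonalisable.

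What you should change is your last step. The degenerate case is not a hard step waiting for topological input. Without the non-aligned hypothesis the clause ``analytic at $z=0$'' is false, so no such input exists.

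\emph{Counterexample on the sphere.} Take $\mathbb{S}^2\subset\mathbb{R}^3$ with coordinates $(x,y,h)$, and let $V$ be the rotation field $-y\partial_x+x\partial_y$ plus the round gradient of $h^2/2$.
\begin{itemize}
\item Then $\dot h=h(1-h^2)$. The poles are hyperbolic sinks, with linearisation eigenvalues $-1\pm i$.
\item The equator $\Lambda$ is a repelling hyperbolic closed orbit of period $2\pi$.
\item The flow is Morse--Smale, since $W^u(\Lambda)$ is open. It is also $C^\infty$-linearisable, since the exponents are non-resonant.
\item $\mathrm{ind}(\Lambda)=1$, and $\Delta(\Lambda)=+1$ because $W^u(\Lambda)$ is open in $\mathbb{S}^2$.
\item $\rho(\Lambda)=\mathbb{I}$ because $\mathbb{S}^2$ is simply connected.
\end{itemize}
Hence $\mathfrak{R}_{V,\rho}(z)=(1-e^{-2\pi z})^{-N}$, which has a pole of order $N$ at $z=0$. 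Reversing the flow gives $(1-e^{-2\pi z})^{N}$, which is holomorphic at $0$ but vanishes there.

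\emph{Acyclicity does not rescue it.} Start from a Morse--Smale gradient flow on $\mathbb{S}^1\times\mathbb{S}^1$. Replace a source by the same local picture: a sink encircled by a repelling orbit, with the flow pointing outward beyond it. Twist by a non-trivial rank-one unitary $\rho$, which is acyclic on the torus. The new orbit is contractible, so $\rho(\Lambda)=1$, and $\mathfrak{R}_{V,\rho}(z)=(1-e^{-T_\Lambda z})^{-1}$.

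So the non-aligned condition is a genuine hypothesis of the statement at $z=0$, not a simplifying one. State the proposition as: meromorphic on $\mathbb{C}$; holomorphic and non-zero for $\mathrm{Re}(z)\neq 0$; and holomorphic and non-zero at $0$ if every closed orbit is non-aligned. Then drop the open-ended degenerate case.
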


Since our ultimate goal involves the value of this function at the origin, we will implicitly refer to its meromorphic extension. It is crucial to observe that we must assume a generic spectral condition on the dynamics: every closed orbit $\Lambda$ must be \emph{non-aligned}, meaning that the twist factor $\Delta(\Lambda)$ is not an eigenvalue of the holonomy representation $\rho(\Lambda)$. This hypothesis guarantees that the linear maps $\left(\mathbb{I} - \Delta(\Lambda)\rho(\Lambda)\right)$ are all invertible, preventing the factors in Equation \eqref{eq:Ruelle-zeta} from vanishing at $z=0$ and thus ensuring that $\mathfrak{R}_{V,\rho}(0)$ is a well-defined, non-vanishing, complex number.

\section{Fried Conjecture}
\label{sec: Fried-conjecture}

The Fried conjecture, originally posited for Axiom-A flows, becomes a proven theorem in the context of Morse--Smale flows. While Fried initially proved it for non-singular flows \cite{Fried87}, the result holds in full generality. However, this extension requires upgrading the statement from a simple scalar equality to an equality between two metrics constructed on the determinant line of the twisted cohomology complex. The exposition in this section relies primarily on the comprehensive framework developed to this end in \cite{Shen21}.

The first step is to construct a canonical metric on the determinant line of the cohomology for a flat vector bundle over the circle $\mathbb{S}^1$. This foundational result serves as the essential local model for understanding the contribution of hyperbolic closed orbits.

\begin{definition}
\label{def: A}
Given the flat vector bundle $(\mathbb{S}^1, \mathcal{E}, \mathbb{P})$ with representation $\rho$, and fixing a generator $\alpha_0 \in \pi_1(\mathbb{S}^1)$ compatible with the standard orientation, we define the holonomy matrix:
\begin{equation}
A \coloneqq \rho(\alpha_0) \in \mathrm{GL}(E).
\end{equation}
\end{definition}

\begin{rem}
\label{rem: complex-from-triangulation}
As detailed in \cite[pages 5-6]{Shen21}, the cohomology of $\mathbb{S}^1$ with coefficients in $\mathcal{E}$ is computed via a minimal triangulation. The resulting cochain complex $(C^\bullet(\mathbb{S}^1, \mathcal{E}), d)$ is given by:
\begin{equation*}
0 \to C^0(\mathbb{S}^1, \mathcal{E}) \xrightarrow{d^0 = A - \mathbb{I}} C^1(\mathbb{S}^1, \mathcal{E}) \to 0,
\end{equation*}
where the spaces are canonically identified with the fibre $E$. The cohomology groups are $H^0(C^\bullet(\mathbb{S}^1, \mathcal{E})) \cong \operatorname{Ker}(A - \mathbb{I})$ and $H^1(C^\bullet(\mathbb{S}^1, \mathcal{E})) \cong E / \operatorname{Im}(A - \mathbb{I})$ and the determinant line is defined as:
\begin{equation}
\text{Det}(H^\bullet(C^{\bullet}(\mathbb{S}^1, \mathcal{E}))) \coloneqq \text{Det}(H^0(C^\bullet(\mathbb{S}^1, \mathcal{E}))) \otimes \left(\text{Det}(H^1(C^\bullet(\mathbb{S}^1, \mathcal{E})))\right)^{-1}.
\end{equation}
\end{rem}

Let us now equip this determinant line with a specific norm.

\begin{definition}
\label{def: norm-det-line-circle}
Using the algebraic torsion isomorphism $\mathbb{T}$ from Equation \eqref{eq:torsion_map} and a volume element $\mu_{A, C} \in \text{Det}(C^{\bullet}(\mathbb{S}^1, \mathcal{E}))$ induced by an inner product, we equip the determinant line with a norm $\|\cdot \|_{\text{Det}(H^\bullet(C^{\bullet}(\mathbb{S}^1, \mathcal{E})))}$ subject to the normalisation condition:
\begin{equation}
    \|\mathbb{T}(\mu_{A, C}) \|_{\text{Det}(H^\bullet(C^{\bullet}(\mathbb{S}^1, \mathcal{E})))} = 1.
\end{equation}
\end{definition}

In the acyclic case, this norm admits an explicit algebraic expression linking it to the dynamical Zeta function.

\begin{prop}
\label{prop: norm-of-one}
If $1 \notin \text{spec}(A)$, the complex $(C^\bullet(\mathbb{S}^1, \mathcal{E}), d)$ is acyclic. Hence, the determinant line is canonically isomorphic to $\mathbb{C}$, and the norm of the generator $\mu_{A, H} \in \text{Det}(H^\bullet(C^{\bullet}(\mathbb{S}^1, \mathcal{E})))$ is given by \cite[Section 1.2]{Shen21}:
\begin{equation}
\|\mu_{A, H} \|_{\text{Det}(H^\bullet(C^{\bullet}(\mathbb{S}^1, \mathcal{E})))} = |\operatorname{det}(\mathbb{I} - A)|^{-1}.
\end{equation}
\end{prop}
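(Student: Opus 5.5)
The plan is to prove the statement in two steps. First I show the complex is acyclic, so that $\text{Det}(H^\bullet)$ is canonically $\mathbb{C}$ and $\mu_{A,H}$ is the generator $1$. Then I compute the scalar $\tau$ in $\mathbb{T}(\mu_{A,C}) = \tau\cdot\mu_{A,H}$ using Theorem \ref{thm:torsion-laplacian-formula}, and read off the norm from the normalisation in Definition \ref{def: norm-det-line-circle}.

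\emph{Acyclicity.} By Remark \ref{rem: complex-from-triangulation}, the only nontrivial differential is $d^0 = A - \mathbb{I}: E \to E$. If $1 \notin \operatorname{spec}(A)$, then $\det(A-\mathbb{I}) \neq 0$, so $d^0$ is an isomorphism. Hence
\[
H^0 \cong \operatorname{Ker}(A-\mathbb{I}) = 0, \qquad H^1 \cong E/\operatorname{Im}(A-\mathbb{I}) = 0 .
\]
The determinant line of the cohomology is therefore $\text{Det}(0)\otimes\text{Det}(0)^{-1} = \mathbb{C}$ canonically, and $\mu_{A,H}$ is the generator $1$. This is the acyclic situation described right after Equation \eqref{eq:torsion_map}.

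\emph{Computing $\tau$.} Equip $C^0 = C^1 = E$ with the inner product defining $\mu_{A,C}$, and take orthonormal bases as in Remark \ref{rem: induced-product-cohomology}. The Laplacians are
\[
\Delta_0 = (A-\mathbb{I})^\dagger(A-\mathbb{I}), \qquad \Delta_1 = (A-\mathbb{I})(A-\mathbb{I})^\dagger .
\]
Both are invertible, so $\operatorname{det}'$ is the ordinary determinant, and
\[
\det\Delta_0 = \det\Delta_1 = |\det(A-\mathbb{I})|^2 .
\]
In Theorem \ref{thm:torsion-laplacian-formula} the exponent for $k=0$ is $0$ and for $k=1$ it is $(-1)^{2}\cdot\tfrac12 = \tfrac12$. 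Therefore
\[
\tau = \bigl(|\det(A-\mathbb{I})|^2\bigr)^{1/2} = |\det(\mathbb{I}-A)|,
\]
up to the harmless phase discussed after that theorem. As a sanity check, Proposition \ref{prop:torsion-contraction} gives the same value: on this two-term acyclic complex, $d+\eta$ restricted to $C^0$ is just $A-\mathbb{I}$.

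\emph{Conclusion.} We have $\mathbb{T}(\mu_{A,C}) = |\det(\mathbb{I}-A)|\,\mu_{A,H}$ up to phase. Definition \ref{def: norm-det-line-circle} requires $\|\mathbb{T}(\mu_{A,C})\| = 1$, and norms are insensitive to phases, so
\[
\|\mu_{A,H}\| = |\det(\mathbb{I}-A)|^{-1}.
\]

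The main obstacle is the convention: whether $\mathbb{T}$ produces $\det(d^0)$ or its inverse, which depends on the ordering of dual factors in $\text{Det}(C^\bullet)$. I avoid ambiguity by relying on the exponents fixed in Theorem \ref{thm:torsion-laplacian-formula}, which determine the sign unambiguously. I would also check that the result is independent of the chosen inner product only through the unitary ambiguity of orthonormal bases, which affects $\mu_{A,C}$ by a phase alone.
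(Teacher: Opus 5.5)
Your argument is correct and reaches the stated value within the paper's own conventions. The paper itself gives no proof beyond the pointer to \cite[Section 1.2]{Shen21}. The natural argument there evaluates the canonical isomorphism of the two-term complex directly: $\operatorname{Det}(C^0)\otimes\operatorname{Det}(C^1)^{-1}\to\mathbb{C}$ sends $\omega\otimes\sigma^{-1}$ to the ratio $(\Lambda^N d^0)\omega/\sigma$. Taking the same unit volume element $\omega\in\Lambda^N E$ on both copies gives $\mathbb{T}(\mu_{A,C})=\det(A-\mathbb{I})$, and hence $\|1\|=|\det(\mathbb{I}-A)|^{-1}$. You reach the same number through Theorem~\ref{thm:torsion-laplacian-formula}, via $\det\Delta_1=|\det(A-\mathbb{I})|^2$ with exponent $\tfrac12$.

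The direct route yields the complex number $\det(A-\mathbb{I})$ rather than just its modulus. It also shows that $\mu_{A,C}=\omega\otimes\omega^{-1}$ does not depend on the Hermitian metric on $E$ at all, as long as the same metric is used on $C^0$ and $C^1$. So your closing remark can be sharpened from ``independent up to phase'' to full independence of the inner product. The same-metric assumption, which you do make, is genuinely needed: different metrics on the two copies would rescale the answer by a volume ratio.

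Your route has a compensating merit. The paper defines $\mathbb{T}$ only by citation, and Theorem~\ref{thm:torsion-laplacian-formula} is the one place where its direction (whether $\det d^0$ or its inverse appears) is actually fixed. Relying on its exponents is therefore the right way to settle the convention issue you flagged.
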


Note that reversing the orientation of $\mathbb{S}^1$ replaces $A$ with $A^{-1}$, modifying the norm to $|\operatorname{det}(\mathbb{I} - A^{-1})|^{-1}$.

Before transposing this construction to the case of closed orbits of a Morse--Smale flow, we recall the notion of relative cohomology, which will be useful to define the Milnor norm.

\begin{definition}
\label{def: relative-cohomology}
For a smooth map between manifolds $f: S \to M$, we define $\Omega^\bullet(f, \mathcal{E})$ as:
\begin{equation}
    \Omega^k(f, \mathcal{E}) \coloneqq \Omega^k(M, \mathcal{E}) \oplus \Omega^{k-1}(S, \mathcal{E}),
\end{equation}
equipped with the differential $d(\omega, \theta) \coloneqq (d^M_k \omega, f^*\omega - d^S_{k-1}\theta)$.
This fits into the short exact sequence of complexes:
\begin{equation*}
    0 \to \Omega^{\bullet-1}(S, \mathcal{E}) \xrightarrow{\; \alpha \;} \Omega^\bullet(f, \mathcal{E}) \xrightarrow{\; \beta \;} \Omega^\bullet(M, \mathcal{E}) \to 0,
\end{equation*}
where the chain maps are defined by $\alpha(\theta) = (0, \theta)$ and $\beta(\omega, \theta) = \omega$. This sequence induces the following long exact sequence in cohomology:
\begin{equation*}
\dots \xrightarrow{f^*} H^{k-1}(S, \mathcal{E}) \xrightarrow{\;\alpha^* \;} H^k(f, \mathcal{E}) \xrightarrow{\;\beta^* \;} H^k(M, \mathcal{E}) \xrightarrow{\;f^* \;} H^{k}(S, \mathcal{E}) \to \dots
\end{equation*}
where $\alpha^*$ and $\beta^*$ are the induced maps in cohomology, and the connecting homomorphism coincides with the pullback map $f^*$.
If $\iota: S \hookrightarrow M$ is an immersion, the relative cohomology $H^k(M, S, \mathcal{E})$ is defined as the cohomology $H^k(\iota, \mathcal{E})$ of the corresponding mapping cone.
\end{definition}

To construct the Milnor metric, we analyse the algebraic structure of the cohomology by decomposing the determinant line via a Smale filtration. This approach exploits the defining property of Morse--Smale flows: the non-wandering set consists of a finite number of hyperbolic critical elements.

\begin{definition}
\label{Smale-filtration}
Let $V$ be a $C^\infty$-linearisable Morse--Smale vector field with non-wandering set $\mathbf{NW}(V) = \{\Lambda_j\}_{j=1}^K$. A \textit{Smale filtration} is an increasing sequence of compact submanifolds with boundary:
\begin{equation*}
\emptyset = M^0 \subset M^1 \subset \dots \subset M^K = M,
\end{equation*}
such that for each $j$, the difference $M^{j} \setminus M^{j-1}$ contains exactly one critical element $\Lambda_j$, satisfying $\{\Lambda_j\} = \bigcap_{t \in \mathbb{R}} \phi^t(M^{j} \setminus M^{j-1})$.
\end{definition}

This filtration allows us to decompose the determinant line of the total cohomology into a tensor product of contributions associated with each critical element, see \cite[Section 2.4, Equation (2.24)]{Shen21}.

\begin{prop}
\label{prop: canonical-iso-sigma}
Let $\{M^j\}_{j=0}^K$ be a Smale filtration. There exists a canonical isomorphism of determinant lines:
\begin{equation}
    \sigma_V: \bigotimes_{j=1}^{K} \text{Det}(H^\bullet(M^{j}, M^{j-1}, \mathcal{E})) \stackrel{\cong}{\longrightarrow} \text{Det}(H^\bullet(M, \mathcal{E})),
\end{equation}
where $\text{Det}(H^\bullet(M^{j}, M^{j-1}, \mathcal{E}))$ denotes the determinant line of the relative cohomology of the pair $(M^j, M^{j-1})$.
\end{prop}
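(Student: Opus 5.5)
The plan is to build $\sigma_V$ by induction on $j$, using the long exact sequences of the triples $(M^j, M^{j-1}, \emptyset)$ and the fact that an exact sequence of finite-dimensional vector spaces induces a canonical isomorphism of determinant lines. The basic algebraic input is this: for an exact sequence $0 \to V_0 \to V_1 \to \dots \to V_m \to 0$ of finite-dimensional spaces, there is a canonical isomorphism $\bigotimes_i \operatorname{Det}(V_i)^{(-1)^i} \cong \mathbb{C}$. It can be seen as the algebraic torsion map $\mathbb{T}$ of Equation \eqref{eq:torsion_map} applied to the sequence, viewed as an acyclic complex, where $\operatorname{Det}(H^\bullet)$ is canonically $\mathbb{C}$. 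Equivalently, one can split each $V_i$ via Proposition \ref{generalised-hodge-dec} and check that the resulting identification does not depend on the splitting.

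For each $j$, I will realise the relative cohomology $H^\bullet(M^j, M^{j-1}, \mathcal{E})$ through the mapping cone of Definition \ref{def: relative-cohomology}, with $\iota: M^{j-1}\hookrightarrow M^j$. This gives the long exact sequence
\begin{equation*}
\dots \to H^{k-1}(M^{j-1},\mathcal{E}) \to H^k(M^j,M^{j-1},\mathcal{E}) \to H^k(M^j,\mathcal{E}) \to H^k(M^{j-1},\mathcal{E}) \to \dots
\end{equation*}
All these groups are finite-dimensional, because the $M^j$ are compact manifolds with boundary; this is the analogue of Remark \ref{rem: finite-dimension-harmonic-form}, for example via a finite triangulation. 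Since the sequence is finite and exact, the algebraic fact above yields a canonical isomorphism
\begin{equation*}
\sigma_j: \operatorname{Det}(H^\bullet(M^{j-1},\mathcal{E}))\otimes \operatorname{Det}(H^\bullet(M^j,M^{j-1},\mathcal{E})) \xrightarrow{\cong} \operatorname{Det}(H^\bullet(M^j,\mathcal{E})).
\end{equation*}
The grading shift of $H^{\bullet-1}(M^{j-1})$ contributes an inverse power, and this is what places $\operatorname{Det}(H^\bullet(M^{j-1}))$ with a positive exponent on the left.

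Starting from $M^0=\emptyset$, for which $\operatorname{Det}(H^\bullet)=\mathbb{C}$, I will compose the maps $\sigma_1,\dots,\sigma_K$ and define $\sigma_V \coloneqq \sigma_K\circ(\sigma_{K-1}\otimes \mathrm{id})\circ\dots$. Since $M^K = M$, the target is $\operatorname{Det}(H^\bullet(M,\mathcal{E}))$, and the composition is an isomorphism because each step is.

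The main obstacle is making the word \emph{canonical} precise, and in particular fixing the signs. The isomorphism attached to an exact sequence is defined only up to a sign convention, which involves the ordering of tensor factors and the Koszul signs of the graded lines. I will fix the conventions of Turaev \cite{Turaev01}, in which the isomorphism is determined by choosing bases compatible with the images and checking that the result is independent of those choices. There is also the question of whether $\sigma_V$ depends on the chosen Smale filtration. For the statement as posed, canonicity for a fixed filtration is enough. If independence is needed later, I would first check invariance under swapping adjacent filtration levels that have no connecting orbits, using the compatibility of torsion isomorphisms in a braid of exact sequences, and I would defer the details to \cite[Section 2.4]{Shen21}.
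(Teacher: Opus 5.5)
Your proposal is correct and takes essentially the same approach as the paper. The paper gives no independent proof and instead cites \cite[Section 2.4, Equation (2.24)]{Shen21}, where $\sigma_V$ is built exactly as you describe: the canonical determinant-line isomorphisms attached to the long exact sequences of the pairs $(M^j, M^{j-1})$ are composed inductively, and the sign conventions are treated as in \cite[Section 1.3]{Shen21}.
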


This isomorphism reduces the problem of defining a metric on the global determinant line to defining metrics on the relative cohomology factors, the structure of which is entirely determined by the nature of the critical element $\Lambda_j$.

\begin{rem}
\label{rem: relative-cohomology-filtration}
Following \cite{Franks82}, the structure of the relative cohomology $H^\bullet(M^{j}, M^{j-1}, \mathcal{E})$ depends entirely on the nature of the critical element $\Lambda_j$. If $\Lambda_j$ is a hyperbolic fixed point with index $\mathrm{ind}(\Lambda_j) = \operatorname{dim}(W^u(\Lambda_j))$, the relative cohomology is concentrated in a single degree:
\begin{equation}
\label{eq: relative-cohomology-point}
    H^q(M^j, M^{j-1}, \mathcal{E}) \cong 
    \begin{cases}
        \mathcal{E}_{\Lambda_j} & \text{if } q = \mathrm{ind}(\Lambda_j), \\
        0 & \text{otherwise.}
    \end{cases}
\end{equation}
Consequently, the following canonical isomorphism holds:
\begin{equation}
    \text{Det}(H^\bullet(M^j, M^{j-1}, \mathcal{E})) \cong (\text{Det}(\mathcal{E}_{\Lambda_j}))^{(-1)^{\mathrm{ind}(\Lambda_j)}}.
\end{equation}
Conversely, if $\Lambda_j$ is a hyperbolic closed orbit with index $\mathrm{ind}(\Lambda_j) = \text{rank}(T^u_{\Lambda_j})$, the relative cohomology identifies with the cohomology of the circle $\Lambda_j$ with coefficients in a specific bundle:
\begin{equation}
\label{eq: relative-cohomology-orbit}
    H^q(M^{j}, M^{j-1}, \mathcal{E}) \cong 
    \begin{cases}
        H^{q-\mathrm{ind}(\Lambda_j)}(\Lambda_j, o(T^u_{\Lambda_j}) \otimes \mathcal{E}|_{\Lambda_j}) & \text{if } q - \mathrm{ind}(\Lambda_j) \in \{0, 1\}, \\
        0 & \text{otherwise.}
    \end{cases}
\end{equation}
Here, the twisting bundle is the tensor product of the flat bundle restricted to the orbit, $\mathcal{E}|_{\Lambda_j}$, and the orientation line bundle of the unstable bundle, $o(T^u_{\Lambda_j})$. Since $\Lambda_j$ is diffeomorphic to $\mathbb{S}^1$, we can apply the results established for the circle model in Proposition \ref{prop: norm-of-one}. The holonomy of the bundle $o(T^u_{\Lambda_j}) \otimes \mathcal{E}|_{\Lambda_j}$ corresponds to the product of the bundle holonomy $\rho(\Lambda_j)$ and the orientation twist $\Delta(\Lambda_j)$ defined in Equation \eqref{twist-factor}. Thus, the relevant holonomy matrix is $A_{\Lambda_j} = \Delta(\Lambda_j)\rho(\Lambda_j)$. Crucially, the relative cohomology groups vanish if and only if $1 \notin \text{spec}(\Delta(\Lambda_j)\rho(\Lambda_j))$. This property coincides exactly with the non-aligned condition assumed for the Ruelle Zeta function, ensuring that the local contribution from closed orbits is acyclic.
\end{rem}

Eventually, the isomorphism $\sigma_V$ is used to transport the local metrics defined on the graded pieces of the filtration to the global determinant line.

\begin{definition}[\textbf{The Milnor Metric}]
\label{def: milnor-metric}
Consider the canonical isomorphism $\sigma_V$ established in Proposition \ref{prop: canonical-iso-sigma}. We define the {Milnor metric}
\begin{equation}
\| \cdot \|_{M,V}: \text{Det}(H^\bullet(M, \mathcal{E})) \to \mathbb{R},
\end{equation}
as the unique norm that renders $\sigma_V$ an isometry. This is tantamount to equipping each factor in the tensor product domain of $\sigma_V$ with a specific metric:
\begin{itemize}
    \item For factors associated with fixed points, the metric induced by the Hermitian product on the fibres of $\mathcal{E}$;
    \item For factors associated with closed orbits, the algebraic norm as defined in Definition \ref{def: norm-det-line-circle} $\|\cdot\|_{\text{Det} \, H^\bullet(\Lambda_j, o(T^u_{\Lambda_j})\otimes \mathcal{E}|_{\Lambda_j})}$.
\end{itemize}
\end{definition}

\begin{rem}
\label{rem:milnor-well-defined}
One might ask about the existence, uniqueness, and well-definedness of such a metric. 
Algebraically, existence and uniqueness are guaranteed since the domain of $\sigma_V$ is a tensor product of lines equipped with specific metrics, therefore it carries a unique induced product metric. As $\sigma_V$ is a linear isomorphism between one-dimensional vector spaces, there exists exactly one norm on the codomain that renders the map an isometry.
A more subtle issue concerns the independence of this definition from the construction of $\sigma_V$, specifically regarding the order in which the relative cohomology groups are "fused" along the Smale filtration. As established in \cite[Section 1.3]{Shen21}, the fusion of determinant lines is associative and commutative up to a sign. Since the metric depends only on the modulus, it is insensitive to these sign ambiguities, rendering the Milnor metric well-defined. Furthermore, it can be shown that the resulting metric does not depend on the specific choice of the Smale filtration used (see \cite[Remark 2.15]{Shen21}).
\end{rem}

\begin{rem}
\label{rem: milnor_construction}
The factorisation between critical elements contributions is intrinsic to this definition. Consider an element $\lambda \in \text{Det}(H^\bullet(M, \mathcal{E}))$. Its preimage decomposes uniquely as $\sigma_V^{-1}(\lambda) = v_1 \otimes \dots \otimes v_{K}$, where each $v_j$ belongs to the local determinant line associated with the critical element $\Lambda_j$. The Milnor norm is defined as:
\begin{equation}
    \|\lambda\|_{M,V} \coloneqq \prod_{j=1}^{K} \|v_j\|_{\Lambda_j}.
\end{equation}
The local norms depend on the type of critical element:
\begin{itemize}
    \item If $\Lambda_j$ is a \textit{fixed point}, the line is $(\operatorname{det} \mathcal{E}_{\Lambda_j})^{(-1)^{\mathrm{ind}(\Lambda_j)}}$. Here, $\|v_j\|_{\Lambda_j}$ is the standard norm induced by the Hermitian metric $h$ on the fibre.
    \item If $\Lambda_j$ is a \textit{closed orbit}, the line is canonically trivial ($\cong \mathbb{C}$ or its dual). Here, $\|v_j\|_{\Lambda_j}$ is computed using the metric defined in Definition \ref{def: norm-det-line-circle}, normalised by the determinant of the Poincaré map $(\mathbb{I} - A_{\Lambda_j})$.
\end{itemize}
\end{rem}

This separation leads to a fundamental structural decomposition of the determinant line into a static part of the flow and a recurrent part. We can regroup the terms in the tensor product into two disjoint sets: $\lambda_A(V, \mathcal{E})$ containing the contributions from fixed points, and $\lambda_B(V, \mathcal{E})$ containing the contributions from closed orbits.
\begin{equation}
   \text{Det}( H^\bullet(M, \mathcal{E})) \cong \lambda_A(V, \mathcal{E}) \otimes \lambda_B(V, \mathcal{E}).
\end{equation}
The fixed-point line $\lambda_A(V, \mathcal{E})$ can be further organised by the Morse index of the critical points:
\begin{equation}
\label{eq: lambda-det-line}
\lambda_A(V, \mathcal{E}) \coloneqq \bigotimes_{k=0}^{n} \left( \bigotimes_{p \in \text{Crit}_{n-k}(V)} \text{Det}(\mathcal{E}_{p}) \right)^{(-1)^{n-k}}
\end{equation}
which is isomorphic to the determinant line of the Thom--Smale complex $(C^\bullet_{TS}(V), d_{TS})$ defined in Definition \ref{Thom--Smale-complex}, explicitly linking the Milnor metric to the combinatorial structure of the flow.

On the other hand, the contribution from the closed orbits is purely scalar. Since each local factor for a closed orbit is canonically isomorphic to $\mathbb{C}$, their tensor product $\lambda_B(V, \mathcal{E})$ is also trivial. The metric on this line, however, carries the dynamical information. By applying the norms from Definition \ref{def: norm-det-line-circle} to the canonical generators $1_{\Lambda_j}$, the total contribution is given by the product:
\begin{equation}
\label{eq: Milnor-contribution-orbit}
    \prod_{\Lambda_j \in \text{Orbits}} \|1_{\Lambda_j}\| = \prod_{\Lambda_j} |\operatorname{det}(\mathbb{I} - \Delta(\Lambda_j)\rho(\Lambda_j))|^{(-1)^{\mathrm{ind}(\Lambda_j)+1}}.
\end{equation}
Comparing this expression with Definition \ref{Ruelle-zeta}, we identify this quantity exactly as the inverse of the absolute value  at zero of the Ruelle Zeta function:
\begin{equation}
    \|\mathbf{1}_B\| = |\mathfrak{R}_{V, \rho}(0)|^{-1}.
\end{equation}
See \cite[Proposition 2.16]{Shen21} for the detailed derivation.

\begin{rem}
\label{rem: milnor-components-interpretation}
We can now synthesise these results to provide a factorisation for the Milnor metric. 
Having proven that the fixed-point line $\lambda_A(V, \mathcal{E})$ is canonically isomorphic to $\text{Det}(C^\bullet_{TS}(V))$, the contribution of the fixed points to the metric corresponds to the algebraic torsion $\tau(C^\bullet_{TS})$ of this complex.
Specifically, if we consider a unitary volume element $\mu_H \in \text{Det}(H^\bullet(M, \mathcal{E}))$, which represents a generator of the cohomology determinant line, its Milnor norm is given by the product:
\begin{equation}
    \|\mu_H\|_{M,V} = |\tau(C^\bullet_{TS})| \cdot |\mathfrak{R}_{V, \rho}(0)|^{-1}.
\end{equation}
This formula provides a clear geometric interpretation of the construction: the Milnor metric on the determinant line factors precisely into the combinatorial torsion associated with the fixed points and the dynamical data of the closed orbits, encoded by the Ruelle Zeta function.
\end{rem}

Having constructed the Milnor metric from the dynamical data, we now turn to its analytic counterpart. The Ray--Singer metric combines the spectral determinant of the Laplacian with the $L^2$-structure on the cohomology.

\begin{definition}[\textbf{The Ray--Singer Metric}]
\label{def: Ray--Singer-metric}
The \textit{Ray--Singer metric} $\|\cdot\|_{RS}$ on the determinant line $\text{Det}(H^\bullet(M, \mathcal{E}))$ is defined as:
\begin{equation}
    \| \cdot \|_{RS} \coloneqq \tau(M, \mathcal{E}) \cdot \| \cdot \|_{\text{Det}},
\end{equation}
where $\| \cdot \|_{\text{Det}}: \text{Det}(H^\bullet(M, \mathcal{E})) \to \mathbb{R}$ is the norm induced by the Riemannian and Hermitian metrics, and $\tau(M, \mathcal{E})$ is the real scalar defined by the regularised determinants of the Laplacians as per Remark \ref{rem: scalar-component-torsion}.
\end{definition}

The following result represents the generalisation of Fried's conjecture for the class of Morse--Smale flows. We refer to \cite[Theorem 3.12]{Shen21} for the complete proof.

\begin{theorem}[\textbf{Fried Conjecture}]
\label{thm: Fried-conjecture}
Let $V$ be a $C^\infty$-linearisable Morse--Smale vector field, as per Definition \ref{MS-cl_linearisable}. Let $\|\cdot\|_{M,V}$ be the Milnor metric defined in Definition \ref{def: milnor-metric} and let $\|\cdot\|_{RS}$ be the Ray--Singer metric as per Definition \ref{def: Ray--Singer-metric}. Then, the two metrics coincide:
\begin{equation}
    \| \lambda \|_{M,V} = \| \lambda \|_{RS}, \quad \forall \lambda \in \text{Det}(H^\bullet(M, \mathcal{E})).
\end{equation}
\end{theorem}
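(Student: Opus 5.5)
The natural strategy is to reduce to the gradient-like case, where the Bismut--Zhang theorem is available. Even that reduction needs an intermediate step. We first pass to flows whose only non-wandering elements are fixed points, using a local modification near each closed orbit. We then compare metrics using the anomaly and gluing formulas for Ray--Singer metrics.

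\textbf{Step 1 (metric independence).} Both sides are independent of the auxiliary metrics up to computable factors. The Milnor metric depends on $V$ only through the data entering $\sigma_V$ (Proposition \ref{prop: canonical-iso-sigma}, Remark \ref{rem:milnor-well-defined}). By the Cheeger--Müller/Bismut--Zhang anomaly formula, the Ray--Singer metric is independent of $g$ and $h$ when $\rho$ is unitary, because the flat Hermitian metric kills the anomaly term. We may therefore choose $g$ adapted to the $C^\infty$-linearisations of Definition \ref{MS-cl_linearisable}, so that near each critical element the metric is Euclidean in the linearising chart.

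\textbf{Step 2 (killing closed orbits).} For each closed orbit $\Lambda$, replace $V$ in a tubular neighbourhood $B_{n-1}(0,r)\times\mathbb{R}/T_\Lambda\mathbb{Z}$ by a Morse--Smale field $V'$. The field $V'$ should have no closed orbits there, and instead two hyperbolic fixed points on $\Lambda$, of indices $\mathrm{ind}(\Lambda)$ and $\mathrm{ind}(\Lambda)+1$. This is a Fried-type ``breaking'' of a periodic orbit via a height function along the circle. The relative cohomology of the corresponding filtration piece is then computed by a two-term Thom--Smale complex
\[
\mathcal{E}_{p}\xrightarrow{\ \mathbb{I}-\Delta(\Lambda)\rho(\Lambda)\ }\mathcal{E}_{q}.
\]
Here the two flow lines from $p$ to $q$ carry signs $+1$ and $-\Delta(\Lambda)$, by the orientation convention defining $\mathrm{sgn}(\gamma)$. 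This is exactly the circle complex of Remark \ref{rem: complex-from-triangulation} with $A=\Delta(\Lambda)\rho(\Lambda)$.

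By Proposition \ref{prop: norm-of-one} and the non-aligned hypothesis, the local Milnor factor for $\Lambda$ equals the factor contributed by the pair $(p,q)$ with the fibre metrics. Multiplying over orbits via \eqref{eq: Milnor-contribution-orbit} gives $\|\cdot\|_{M,V}=\|\cdot\|_{M,V'}$. Since the gluing $\sigma_V$ is multiplicative in the filtration (Remark \ref{rem:milnor-well-defined}), the comparison reduces to this local identity.

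\textbf{Step 3 (gradient case).} For $V'$, which has only fixed points, $\|\cdot\|_{M,V'}$ is the metric of Remark \ref{rem: milnor-components-interpretation} with no Ruelle factor. Now invoke Bismut--Zhang: for a Morse--Smale gradient-like field and unitary $\rho$, the Milnor metric built from the Thom--Smale complex equals $\|\cdot\|_{RS}$. To apply it, one checks that $V'$ is gradient-like for some Morse function. This follows from Smale's construction of a Lyapunov function adapted to the filtration. The proof itself proceeds by Witten deformation $d^\nabla_T=e^{-Tf}d^\nabla e^{Tf}$: the small-eigenvalue complex converges to $(C^\bullet_{TS},d_{TS})$, giving the factor $|\tau(C^\bullet_{TS})|$ via Theorem \ref{thm:torsion-laplacian-formula}, while the large eigenvalues contribute a Chern--Simons-type term that vanishes for unitary $\rho$.

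\textbf{Main obstacle.} I expect the hardest part to be Step 2, showing that the local replacement is compatible with $\sigma_V$. The point is that the filtration piece containing $\Lambda$ splits into two pieces with the stated relative cohomologies, and the fusion isomorphism intertwines the torsion of the two-term complex with the circle norm, including signs and the twist $\Delta(\Lambda)$ arising from the non-orientability of $W^u(\Lambda)$. I would first verify this on the model $B_{n-1}\times\mathbb{S}^1$, using the linearisation to write $W^u$ explicitly. Then I would globalise by excision in relative cohomology.
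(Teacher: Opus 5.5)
The paper gives no argument of its own for this theorem; it simply defers to \cite[Theorem 3.12]{Shen21}. Your architecture is the natural reduction: break each closed orbit into two zeros of adjacent index, check that $\|\cdot\|_{M}$ is unchanged, then use the $g$-independence of $\|\cdot\|_{RS}$ and Bismut--Zhang, whose correction term vanishes for a flat Hermitian $h$. However, Step 3 fails as written. Bismut--Zhang is a theorem about $X=-\nabla_g f$ with $f$ and $g$ in Morse normal form near the zeros. A field that is merely gradient-like, in the sense of admitting a Lyapunov function, is not covered. Definition \ref{MS-cl_linearisable} allows any hyperbolic $A_i$, for instance a focus with non-real eigenvalues, and such zeros survive in $V'$. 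The linearisation of $-\nabla_g f$ at a zero is $-g^{-1}\mathrm{Hess}f$, which is conjugate to a symmetric matrix and so has real spectrum. Hence such a $V'$ is not a gradient for any metric, and changing $g$ cannot help. You need an extra local move. In each linearising ball, deform $A_i$ blockwise on $T^s_p\oplus T^u_p$ to $\mathrm{diag}(-\mathbb{I},\mathbb{I})$ through matrices that are strictly dissipative on $T^s_p$ and strictly expanding on $T^u_p$ for a fixed Lyapunov inner product, using a radial cut-off; this condition is convex. Then $T^s_p\cap B$ and $T^u_p\cap B$ remain the local invariant manifolds, and the flow is unchanged outside $B$. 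So all global stable and unstable manifolds are unchanged, as are all connecting orbits with their signs and the homotopy classes of the connecting paths (hence the parallel transports), and therefore $\|\cdot\|_{M}$. The new zeros $p,q$ should be built directly in this normal form. You should also justify, rather than take from Remark \ref{rem: milnor-components-interpretation}, that the filtration-defined metric agrees with the Thom--Smale Milnor metric of Bismut--Zhang, including the identification $H^\bullet(C^\bullet_{TS})\cong H^\bullet(M,\mathcal{E})$.

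In Step 2, the difficulty you single out is actually the routine part. Put $k=\mathrm{ind}(\Lambda)$ and refine the filtration so that $p\in M'\setminus M^{j-1}$ and $q\in M^j\setminus M'$. The long exact sequence of the triple $(M^j,M',M^{j-1})$ then collapses to $0\to H^k(M^j,M^{j-1})\to\mathcal{E}_p\xrightarrow{\delta}\mathcal{E}_q\to H^{k+1}(M^j,M^{j-1})\to0$. Up to a unitary identification, $\delta=\mathbb{I}-\Delta(\Lambda)\rho(\Lambda)^{\pm1}$, and Definition \ref{def: norm-det-line-circle} is by construction the torsion norm of exactly this complex. Arguing through $\mathbb{T}$ rather than Proposition \ref{prop: norm-of-one} also removes your non-aligned assumption, which is not a hypothesis of the theorem. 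The real work is in constructing $V'$, and two points are missing there. First, in the model $L_{per}$ the matrix $\mathcal{A}(\theta)$ is hyperbolic only in the Floquet sense. Inserting a factor $\beta(\theta)$ with zeros $\theta_0,\theta_1$ in front of $\partial_\theta$ therefore need not produce hyperbolic zeros. Nor does it keep the normal contraction and expansion along the arc where $\theta$ now runs backwards. Take instead something like $\beta(\theta)L_{per}+c\,(\Pi^u_\theta-\Pi^s_\theta)x\,\partial_x$ with $c$ large, where $\Pi^{s,u}_\theta$ are the projections of the splitting. This preserves the invariant subbundles even when $T^u_\Lambda$ is non-orientable. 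Second, $W^u_{V'}(p)$ is a codimension-one slice of $W^u_V(\Lambda)$, and $W^s_{V'}(q)$ is a codimension-one slice of $W^s_V(\Lambda)$. Transversality of the full invariant manifolds does not pass to slices, so $\theta_0,\theta_1$ must be chosen as regular values via Sard. Otherwise $V'$ is not Morse--Smale and its Thom--Smale complex is undefined. Finally, in Step 1 only $g$-independence is true, and only it is needed: $\|\cdot\|_{RS}$ does change when $h$ is rescaled and $\chi(M)\neq0$.
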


\begin{rem}
This theorem establishes a profound correspondence between two fundamentally different mathematical domains. It asserts that the "size" of the cohomology space, when measured spectrally via the analysis of the Laplacian, is identical to the value obtained by encoding the dynamical features of the flow -- fixed points and closed orbits -- into a combinatorial metric. This equality unifies the topological, analytical, and dynamical invariants of the manifold.
\end{rem}

A necessary clarification regarding the validity of this result is in order. The strict equality stated in Theorem \ref{thm: Fried-conjecture} holds exactly if the representation $\rho$, through which we built the vector bundle $\mathcal{E}$ in Definition \ref{total-space}, is unitary. Indeed, a unitary representation allows for the choice of a Hermitian metric on $\mathcal{E}$ that is compatible with the flat connection; in this case, the anomaly form present in the general theory vanishes identically.
More broadly, the theorem holds as stated under the weaker assumption that the flat vector bundle $\mathcal{E}$ is unimodular\footnote{A flat vector bundle is said to be \textit{unimodular} if the determinant of its holonomy representation has unit modulus, \textit{i.e.}, $|\operatorname{det} \rho(\gamma)| = 1$ for all $\gamma \in \pi_1(M)$.}.
As shown in the general main theorem of \cite{Shen21}, if the bundle is not unimodular, the relation between the two metrics involves an additional correction term. This term is expressed as the integral of a characteristic class, specifically the Mathai-Quillen form associated with the connection.
Finally, we note that the technical hypothesis of $C^\infty$-linearisability is not strictly necessary for the statement to hold within the topological framework of \cite{Shen21}. However, as will be explored in upcoming works, this linearisability condition is strictly required to treat the Fried conjecture through the lens of topological field theories, where the analysis relies on the microlocal properties of the Lie derivative and its Pollicott--Ruelle resonances. 
\nocite{*} 
\bibliographystyle{alpha} 
\bibliography{Bibliography} 

\newcommand{\etalchar}[1]{$^{#1}$}
\begin{thebibliography}{BCM{\etalchar{+}}02}

\bibitem[AB68]{Atiyah68}
M.F. Atiyah and R.~Bott.
\newblock A {Lefschetz} fixed point formula for elliptic complexes: {II}. {Applications}.
\newblock {\em Annals of Mathematics}, pages 451--491, 1968.

\bibitem[Bal18]{Baladi18}
V.~Baladi.
\newblock {\em Dynamical zeta functions and dynamical determinants for hyperbolic maps}.
\newblock Springer, 2018.

\bibitem[BCM{\etalchar{+}}02]{Bouwknegt02}
P.~Bouwknegt, A.~Carey, V.~Mathai, M.~Murray, and D.~Stevenson.
\newblock Twisted k-theory and k-theory of bundle gerbes.
\newblock {\em Communications in Mathematical Physics}, 228(1):17--49, 2002.

\bibitem[BT82]{Bott82}
R.~Bott and L.~W. Tu.
\newblock {\em Differential forms in algebraic topology}, volume~82 of {\em Graduate Texts in Mathematics}.
\newblock Springer-Verlag, New York, 1982.

\bibitem[BZ92]{Bismut92}
J.-M. Bismut and W.~Zhang.
\newblock {\em An extension of a theorem by {C}heeger and {M}{\"u}ller}, volume 205 of {\em Ast{\'e}risque}.
\newblock Soci{\'e}t{\'e} Math{\'e}matique de France, Paris, 1992.
\newblock With an appendix by Fran{\c c}ois Laudenbach.

\bibitem[BZ94]{Bismut94}
J.-M. Bismut and W.~Zhang.
\newblock Milnor and {R}ay-{S}inger metrics on the equivariant determinant of a flat vector bundle.
\newblock {\em Geometric and Functional Analysis}, 4(2):136--212, 1994.

\bibitem[CD22]{Chaubet22}
Y.~Chaubet and N.~V. Dang.
\newblock Dynamical torsion for contact {A}nosov flows.
\newblock {\em Analysis \& PDE}, 15(8):1891--1959, 2022.
\newblock arXiv:1911.09931v2 [math.DS].

\bibitem[Che79]{Cheeger79}
Jeff Cheeger.
\newblock Analytic torsion and the heat equation.
\newblock {\em Annals of Mathematics}, 109(2):259--322, 1979.

\bibitem[DGRS20]{Dang20}
N.~V. Dang, C.~Guillarmou, G.~Rivi{\`e}re, and S.~Shen.
\newblock The {Fried} conjecture in small dimensions.
\newblock {\em Inventiones mathematicae}, 220(2):525--579, 2020.

\bibitem[dR84]{deRham84}
G.~de~Rham.
\newblock {\em Differentiable manifolds: {F}orms, currents, harmonic forms}.
\newblock Springer-Verlag, Berlin Heidelberg, 1984.

\bibitem[DR17a]{DangRiviere17Anisotropic}
N.~V. Dang and G.~Rivi{\`e}re.
\newblock Spectral analysis of {M}orse-{S}male flows {I}: construction of the anisotropic spaces.
\newblock {\em arXiv preprint arXiv:1703.08040}, 2017.

\bibitem[DR17b]{DangRiviere17Resonances}
N.~V. Dang and G.~Rivi{\`e}re.
\newblock Spectral analysis of {M}orse-{S}male flows {II}: resonances and resonant states.
\newblock {\em arXiv preprint arXiv:1703.08038}, 2017.

\bibitem[DR17c]{DangRiviere17Topology}
N.~V. Dang and G.~Rivi{\`e}re.
\newblock Topology of {P}ollicott-{R}uelle resonant states.
\newblock {\em arXiv preprint arXiv:1703.08037}, 2017.

\bibitem[DR21]{Dang21}
N.~V. Dang and G.~Rivi{\`e}re.
\newblock Pollicott-{R}uelle spectrum and {W}itten {L}aplacians.
\newblock {\em Journal of the European Mathematical Society}, 23(8):2577--2634, 2021.
\newblock arXiv:1709.04265v1 [math.DS].

\bibitem[DZ16]{Dyatlov16}
S.~Dyatlov and M.~Zworski.
\newblock Dynamical zeta functions for {Anosov} flows via microlocal analysis.
\newblock {\em Ann. Sci. Ec. Norm. Sup{\'e}r. (4)}, 49(3):543--577, 2016.

\bibitem[DZ17]{Dyatlov17}
S.~Dyatlov and M.~Zworski.
\newblock Ruelle zeta function at zero for surfaces.
\newblock {\em Inventiones mathematicae}, 210(1):211--229, 2017.

\bibitem[FH91]{Fulton91}
W.~Fulton and J.~Harris.
\newblock {\em Representation theory: {A} first course}, volume 129 of {\em Graduate Texts in Mathematics}.
\newblock Springer-Verlag, New York, 1991.

\bibitem[Fra35]{Franz35}
W.~Franz.
\newblock {\"U}ber die {T}orsion einer {{\"U}}berdeckung.
\newblock {\em Journal f{\"u}r die reine und angewandte Mathematik}, 173:245--254, 1935.

\bibitem[Fra82]{Franks82}
John~M. Franks.
\newblock {\em Homology and dynamical systems}, volume~49 of {\em CBMS Regional Conference Series in Mathematics}.
\newblock American Mathematical Society, Providence, R.I., 1982.
\newblock Published for the Conference Board of the Mathematical Sciences, Washington, D.C.

\bibitem[Fri87]{Fried87}
D.~Fried.
\newblock Lefschetz formulas for flows.
\newblock {\em Contemporary Mathematics}, 58:19--69, 1987.

\bibitem[Fri95]{Fried95}
D.~Fried.
\newblock Meromorphic zeta functions for analytic flows.
\newblock {\em Communications in Mathematical Physics}, 174(1):161--190, 1995.

\bibitem[FS11]{FaureSjostrand11}
F.~Faure and J.~Sj{\"o}strand.
\newblock {U}pper bound on the density of {R}uelle resonances for {A}nosov flows.
\newblock {\em Communications in Mathematical Physics}, 308(2):325--364, 2011.

\bibitem[Hat02]{Hatcher02}
A.~Hatcher.
\newblock {\em {A}lgebraic {T}opology}.
\newblock Cambridge University Press, Cambridge, 2002.

\bibitem[HK03]{Hasselblatt03}
B.~Hasselblatt and A.~Katok.
\newblock {\em A first course in dynamics: {W}ith a panorama of recent developments}.
\newblock Cambridge University Press, Cambridge, 2003.

\bibitem[HKS20]{HadfieldKandelSchiavina20Ruelle}
C.~Hadfield, S.~Kandel, and M.~Schiavina.
\newblock Ruelle zeta function from field theory.
\newblock {\em arXiv preprint arXiv:2002.03952}, 2020.

\bibitem[HL99]{Hutchings99Circle}
M.~Hutchings and Y.-J. Lee.
\newblock Circle-valued {M}orse theory, {R}eidemeister torsion, and {S}eiberg-{W}itten invariants of 3-manifolds.
\newblock {\em Topology}, 38(4):861--888, 1999.
\newblock arXiv:dg-ga/9612004v1.

\bibitem[Hut02a]{Hutchings02}
M.~Hutchings.
\newblock Lecture notes on {M}orse homology (with an eye towards {F}loer theory and pseudoholomorphic curves).
\newblock Technical report, UC Berkeley, 2002.
\newblock Lecture notes.

\bibitem[Hut02b]{Hutchings02Reidemeister}
M.~Hutchings.
\newblock Reidemeister torsion in generalized {M}orse theory.
\newblock {\em Forum Mathematicum}, 14(2):209--244, 2002.
\newblock arXiv:math/9907066v2 [math.DG].

\bibitem[KH95]{KatokHasselblatt95}
A.~Katok and B.~Hasselblatt.
\newblock {\em Introduction to the Modern Theory of Dynamical Systems}, volume~54 of {\em Encyclopedia of Mathematics and its Applications}.
\newblock Cambridge University Press, Cambridge, 1995.

\bibitem[Lee09]{Lee09ManifoldsDiffGeom}
J.~M. Lee.
\newblock {\em {M}anifolds and {D}ifferential {G}eometry}, volume 107 of {\em Graduate Studies in Mathematics}.
\newblock American Mathematical Society, Providence, RI, 2009.

\bibitem[Lee12]{Lee12Smooth}
J.~M. Lee.
\newblock {\em Introduction to Smooth Manifolds}, volume 218 of {\em Graduate Texts in Mathematics}.
\newblock Springer, New York, second edition, 2012.

\bibitem[Mil63]{Milnor63}
J.~Milnor.
\newblock {\em Morse Theory}, volume~51 of {\em Annals of Mathematics Studies}.
\newblock Princeton University Press, Princeton, N.J., 1963.

\bibitem[Mne14]{Mnev14}
P.~Mnev.
\newblock Lecture notes on torsions.
\newblock {\em arXiv preprint arXiv:1406.3705}, 2014.

\bibitem[MR88]{Marsden88}
J.~E. Marsden and T.~S. Ratiu.
\newblock {\em Manifolds, tensor analysis, and applications}, volume~75 of {\em Applied Mathematical Sciences}.
\newblock Springer-Verlag, New York, second edition, 1988.

\bibitem[M{\"u}l78]{Muller78}
Werner M{\"u}ller.
\newblock Analytic torsion and {R}-torsion of {R}iemannian manifolds.
\newblock {\em Advances in Mathematics}, 28(3):233--305, 1978.

\bibitem[Mun00]{Munkres00}
J.~R. Munkres.
\newblock {\em {T}opology}.
\newblock Prentice Hall, Upper Saddle River, NJ, second edition, 2000.

\bibitem[MW11]{MathaiWu11Twisted}
V.~Mathai and S.~Wu.
\newblock Analytic torsion for twisted de {R}ham complexes.
\newblock {\em arXiv preprint arXiv:0810.4204v6}, 2011.

\bibitem[Nic03]{Nicolaescu03}
Liviu~I. Nicolaescu.
\newblock Notes on the {Reidemeister} torsion, 2003.
\newblock Lecture notes for the course MATH 927, University of Notre Dame, Fall 2002.

\bibitem[PdM82]{Palis82}
J.~Palis and W.~de~Melo.
\newblock {\em Geometric theory of dynamical systems: {A}n introduction}.
\newblock Springer-Verlag, New York, 1982.

\bibitem[Per01]{Perko2001}
L.~Perko.
\newblock {\em Differential Equations and Dynamical Systems}.
\newblock Springer, New York, third edition, 2001.

\bibitem[Rei35]{Reidemeister35}
K.~Reidemeister.
\newblock Homotopieringe und {L}insenr{\"a}ume.
\newblock {\em Abhandlungen aus dem Mathematischen Seminar der Universit{\"a}t Hamburg}, 11:102--109, 1935.

\bibitem[RS71a]{RaySinger71}
D.~B. Ray and I.~M. Singer.
\newblock {$R$}-torsion and the {L}aplacian on {R}iemannian manifolds.
\newblock {\em Advances in Mathematics}, 7(2):145--210, 1971.

\bibitem[RS71b]{Ray71}
D.B. Ray and I.M. Singer.
\newblock R-torsion and the {Laplacian} on riemannian manifolds.
\newblock {\em Advances in Mathematics}, 7(2):145--210, 1971.

\bibitem[Rue76]{Ruelle76}
D.~Ruelle.
\newblock Zeta-functions for expanding maps and {Anosov} flows.
\newblock {\em Inventiones mathematicae}, 34(3):231--242, 1976.

\bibitem[Rue86]{Ruelle86}
D.~Ruelle.
\newblock Resonances of chaotic dynamical systems.
\newblock {\em Physical review letters}, 56(5):405, 1986.

\bibitem[See67]{Seeley67}
R.~T. Seeley.
\newblock Complex powers of an elliptic operator.
\newblock In {\em Singular Integrals (Proc. Sympos. Pure Math., Chicago, Ill., 1966)}, volume~10, pages 288--307, Providence, R.I., 1967. Amer. Math. Soc.

\bibitem[She17]{Shen17}
S.~Shen.
\newblock Analytic torsion, dynamical zeta functions, and the {Fried} conjecture.
\newblock {\em Analysis \& PDE}, 11(1):1--74, 2017.

\bibitem[She21]{Shen21Survey}
S.~Shen.
\newblock Analytic torsion and dynamical flow: A survey on the fried conjecture.
\newblock In B.~Klartag and E.~Milman, editors, {\em Geometric Aspects of Functional Analysis: Israel Seminar (GAFA) 2017-2019}, volume 338 of {\em Progress in Mathematics}, pages 247--299. Birkh{\"a}user, 2021.

\bibitem[Sma61]{Smale61}
S.~Smale.
\newblock On gradient dynamical systems.
\newblock {\em Annals of Mathematics}, 74(1):199--206, 1961.

\bibitem[Sma67]{Smale67}
S.~Smale.
\newblock Differentiable dynamical systems.
\newblock {\em Bulletin of the American Mathematical Society}, 73(6):747--817, 1967.

\bibitem[SY21]{Shen21}
S.~Shen and J.~Yu.
\newblock Morse-{S}male flow, {M}ilnor metric, and dynamical zeta function.
\newblock {\em Journal de l'\'{E}cole polytechnique \textemdash{} Math\'{e}matiques}, 8:585--607, 2021.
\newblock arXiv:1806.00662v2 [math.DG].

\bibitem[Tur01]{Turaev01}
Vladimir Turaev.
\newblock {\em Introduction to combinatorial torsions}.
\newblock Lectures in Mathematics ETH Z{\"u}rich. Birkh{\"a}user Verlag, Basel, 2001.
\newblock Notes taken by Felix Schlenk.

\bibitem[War83]{Warner1983}
F.~W. Warner.
\newblock {\em Foundations of Differentiable Manifolds and Lie Groups}, volume~94 of {\em Graduate Texts in Mathematics}.
\newblock Springer-Verlag, New York, 1983.

\end{thebibliography}
\end{document}